\newtheorem{theorem}{Theorem}[section]
\begin{document}

\title[Exact math-models of micro universes]
{Exact mathematical models of a unified quantum theory; 
Static and expanding micro universes}

%%\arxurl{} %arXiv reference #, url appears as foot note on first page
\thanks{Partially supported by NSF grant DMS-0604861}

\author[Z. I. Szab\'o]{Zolt\'an Imre Szab\'o}

\address{City University of New York and R\'enyi Institute of Mathematics 
Budapest}  
%lines should be separated with double backslashes: \\
%\addressemail{zoltan.szabo@lehman.cuny.edu}

\begin{abstract}
In this paper such Riemann metrics are established 
whose Laplace-Beltrami operators are identical to familiar 
Hamilton operators of elementary particle systems.
Such metrics are the natural positive definite invariant 
metrics defined on two-step nilpotent Lie groups.
The corresponding wave and Schr\"odinger operators 
emerge in the Laplacians both of the static and 
solvable extensions of these nilpotent groups.
The latter manifolds are endowed with natural 
invariant indefinite metric of 
Lorentz signature. Thus, these new exact 
mathematical models 
provide a relativistic theory for elementary particles. 

This theory establishes infinitely many
non-equivalent models for which even classification is possible. The
particle systems attached to them behave exactly like
their relatives introduced by the familiar standard model of elementary
particle physics.
Although there are strong connections between the two theories, the 
new mathematical models were independently discovered. 
They appeared, originally, in those works of this author where 
isospectral Riemann manifolds with different local geometries have been
constructed. Their strong connection to quantum theory,
which seems to be completely unknown in the literature, 
was realized by some years later. 

One of the most important new features of this new theory is that  
the electromagnetic, the
weak-nuclear, and the strong-nuclear forces emerge in a unified way. 
The main unifying idea is that
these forces can be described by the eigenfunctions of the very same
Laplacian such that the distinct forces emerge on 
distinct invariant 
subspaces of this common quantum operator. There are also other 
bonds realized which make the connection among these
three fundamental forces much more strong. The missing fourth 
fundamental force, gravitation, is not discussed in this paper. 
 
On the solvable extensions the new models look like
Friedmann's expanding universe being adopted to the microscopic level. 
Like the macroscopic one, also these microscopic
expanding models obey Hubble's law. The microscopic models, however,
offer much more complex structures with much more subtle
explanations for some of those phenomena which have originally been 
clarified by the Friedmann model. For instance, 
it is rigorously established in this paper that, 
although the Riemann spaces 
in the mathematical models are not spatially isotropic in general, yet
they are always spectrally isotropic. It follows that the
radiation, which is experimentally known in cosmology, must also be isotropic. 
This statement actually contrasts the widely accepted view
that the isotropic cosmic radiation is the    
chief verification of the spatial-isotropy assumed by 
Friedmann in building up his theory.  

\end{abstract}

\maketitle

\section{Introduction} 
%%\cutpage 
%move this line so that the first page breaks at the appropriate place.

%%\setcounter{page}{<insert page # for second page>}

By assuming the metric in the form
    $ds^2 = {a(t)}^2 ds_3^2 - dt^2$, 
Friedmann (1922) built up his relativistic 
universe on two 
assumptions: the universe looks identical in whichever direction we look,
and this would also be true if we were observing it anywhere else. By
the two differential equations imposed by the
Einstein equation upon the function 
$a(t)$, he
showed then that, instead of being static, the universe is expanding. 
Without knowing Friedmann's theoretical prediction, 
this phenomenon was actually 
discovered by Edwin Hubble (1929) several years later.

This phenomenon is pointed out in 
this paper in quite a different situation.
Independently from Friedmann's theory, this one has been evolved
from those works of the author where isospectral manifolds with
different local geometries were constructed on two different types of
manifolds, called Z-torus bundles (alias Z-crystals) 
and Z-ball bundles, respectively \cite{sz1}-\cite{sz4}. 
These bundles are constructed by means of  
nilpotent Lie groups as well as their solvable 
extensions such that one considers tori resp. balls in the center 
(called also
Z-space) of the nilpotent group. Both the nilpotent and extended groups 
are endowed with appropriate natural invariant metrics. 

Surprisingly enough, the Laplacians on the nilpotent groups
(endowed always with invariant positive definite Riemann metrics) 
are nothing but
the familiar Hamilton operators corresponding to 
particle-antiparticle systems. On the two types of manifolds, the
represented particles can be distinguished as follows. On Z-crystals, the
Laplacian represents particles having no interior, where it
actually appears as a Ginsburg-Landau-Zeeman operator of a system 
of electrons, positrons, and electron-positron-neutrinos. 
The particles represented by Z-ball bundles do have 
interior and the Laplacian decomposes into an exterior  
Ginsburg-Landau-Zeeman operator and an
interior spin operator by which the weak-force and the strong-force 
interactions can be described, respectively. These nuclear forces are
very different from the electromagnetic force emerging in the Laplacian
of Z-crystals. The weak nuclear force
explains the beta decay, while the strong force keeps the
parts of atomic nuclei together. Yet, the Z-crystal-Laplacian and the 
weak-force-Laplacian can be led back to the very same radial 
Ginsburg-Landau-Zeeman operator. This phenomenon is consistent with
the Weinberg-Salam theory of beta decays, which unified the weak force
with the electromagnetic force.   

There are two ways to introduce relativistic time on these models.
The static model is constructed by the Cartesian product of the nilpotent
group with $\mathbb R$. The latter component becomes the time axis 
regarding the natural Lorenz-indefinite metric. 
According to the type of model being extended, 
the Laplacian is the sum of Schr\"odinger and electron-positron-neutrino,
resp., weak-nuclear and strong-nuclear 
wave operators. The last two wave operators appear for particles having 
interior. They are further decomposed into W- and Z-operators
which are analogous to the electron-positron and 
electron-positron-neutrino wave operators.
  
Relativistic time can be introduced also by solvable extension, 
which also increases the
dimension of the nilpotent group by $1$. The new axis, 
which is just a half-line $\mathbb R_+$, can also be used 
as time-axis for introducing a natural
relativistic metric on these extensions. 
Contrary to the static case, in this way 
one defines expanding models
obeying Hubble's law, furthermore, the Laplacian decomposes into expanding
Schr\"odinger and electron-positron-neutrino, resp., weak and strong wave 
operators and the corresponding W- and Z-operators.

It is a well known experimental fact that, even though the universe is
expanding, there is no expansion measured on small scale level. 
Thus the question arises if the expanding solvable
models describe real existing microscopic
world. Fortunately this question can positively be answered. In fact, 
despite of the expansion, the particles must not be expanding. 
The reason explaining this paradoxical phenomenon is that,
defined by the angular momentum and spin operators, 
also these mathematical models correspond constant magnetic fields to the
particles, and, due to the expansion, the change of these fields induces  
electromagnetic fields, which are completely radiated out from the system,
keeping both the magnetic fields and the spectra of the 
particle-systems constant.  
There is also mathematically established in this paper that this 
radiation must be isotrpic, meaning that it is the
same whichever direction is measured from. Thus the size and 
several other constants of particles must not be changing even according
to the expanding solvable model. Actually this model 
gives a new explanation 
for the presence of an isotropic radiation in the space.

Expanding on large scale but being stationary on small
scale is a well known phenomenon, which, without the 
above explanation, could have been a major argument against 
the physical reality of the solvable extensions. 
According to physical experiments, although far
distant clusters of galaxies move very rapidly away from us,
the solar system is not expanding, nor is our galaxy or the cluster
of galaxies to which it belongs. This stagnancy is even more apparent
on microscopic level, where, for instance, the spectroscopic 
investigations
of the light arriving from far distance galaxies confirm that 
the spectrum of hydrogen atom is the same today than it was macro 
billions of years ago.
   
The existence of isotropic background radiation is also well known which 
was measured, first, by Arno Penzias and Robert Wilson, in 1965. 
It is believed, today, that these radiations are travelling to us 
across most of the observable universe, thus the radiation isotropy
proves that the universe must be the same in every direction, if only
on a large scale. This phenomenon is considered as a remarkably
accurate confirmation of Friedmann's first assumption. Our expanding
model provides a much more subtle conclusion, however: This background
radiation must be isotropic even if it arrives to us from 
very near distances. Moreover, it holds true also on non-isotropic spaces.

This exact mathematical model is not derived from the standard
model of elementary particles,
which theory is based on a non-Abelian gauge theory where 
the basic objects are 
Yang-Mills connections
defined on principal fibre bundles having structure group $SU(3)$. 
Contrary to these gauge theories, in our case
all physical quantities are defined by invariant Riemann metrics 
living on 
nilpotent resp. solvable groups. It will also be pointed out that 
no regular gauge-group exist on these models regarding of which these
objects are gauge invariant. 
Yet, there is a bridge built up between
the two theories, which explains why the particles introduced by the 
two distinct models exhibit the very same physical features. This bridge
can be regarded as a correspondence principle associating certain Riemann
metrics to the Yang-Mills models of elementary particles. 

The key point about the new exact mathematical model is that 
the center of the nilpotent group
makes room to describe also the rich ``inner life" of particles, which is 
known both experimentally and by theories 
explaining these experimental facts. 
This ``inner life" is displayed by the de Broglie waves 
which appear in a new form in this new situation such that they 
are written up in terms of the Fourier transforms
performed only on the center  of the nilpotent group. By this reason,
they are called Z-Fourier transforms, which are defined on the two types
of models accordingly. On Z-crystals, where there is no ``inner life",
it is nothing but the discrete 
Z-Fourier transform defined by the Z-lattice
by which the Z-crystals are introduced. On Z-ball bundles, however, 
in order to
obey the boundary conditions, more complicated so called twisted 
Z-Fourier transforms are introduced. 

The action of the very same Laplacian
appears quite differently on these different
function spaces. On Z-crystals, where there are no Z-boundary conditions
involved, the strong nuclear forces do not appear either. In this case, 
the eigenfunctions arise as eigenfunctions of 
Ginsburg-Landau-Zeeman operators. By this reason, they are called 
electromagnetic eigenfunctions. On Z-crystals, the theory corresponds to 
quantum electrodynamics (QED), while on Z-ball bundles 
it relates to quantum chromodynamics (QCD). 

In fact, on Z-ball bundles, due to the the Z-boundary conditions,
the Laplacian appears in a much more complex form exhibiting both the 
weak and strong forces. More precisely, the weak
force eigenfunctions satisfying a 
given boundary condition are defined by the
eigenfunctions of the exterior Ginsburg-Landau-Zeeman operator 
introduced above
for particles having interior. Although there are numerous 
differences between this exterior Ginsburg-Landau-Zeeman operator 
and the original
GLZ-operator defined on Z-crystals, they are both reduced to 
the very same radial operator acting on radial functions. As a result, from 
the point of view of the elements of the spectrum, 
they are the same operators. 
This is the mathematical certification of
the Weinberg-Salam theory which unified the weak interaction with the
electromagnetic force. The strong force eigenfunctions are defined 
by the eigenfunctions of the inner spin operator. All these forces
reveal the very same strange properties which are described in QCD.
This is how a unified theory for the three:
1.) electromagnetic, 2.) weak, and 3.) strong  nuclear forces is 
established in this paper. The only
elementary force missing from this list is the gravitation, 
which is not discussed in this paper.

This very complex physical-mathematical theory can clearly be evolved 
just gradually. In order
to understand the physical contents of the basic objects 
appearing in new forms in this new approach, first, those parts
of the classical quantum theory are reviewed which are necessary 
to grasp these renewed versions of these basic concepts. Then, after
introducing the basic mathematical objects on 2-step nilpotent Lie groups,
several versions of the Z-Fourier transform will be studied. They are the
basic tools both for introducing the de Broglie waves in a new explicit 
form and developing the theory unifying the three fundamental forces.
Besides explicit eigenfunction computations,
there is pointed out in this part that the Laplacian on Z-crystals 
is nothing but the Ginsburg-Landau-Zeeman operator of a system of 
electrons, positrons, and electron-positron-neutrinos. Furthermore,
on the Z-ball models, it is the sum of the exterior 
Ginsburg-Landau-Zeeman operator
and the interior spin operator by which the strong force interaction
can be established. 

Then, relativistic time is introduced and 
both static and expanding models are established. The Laplacian on these
space-time manifolds appears as the sum of wave operators belonging to 
the particles the system consists of. 
The paper is concluded by pointing out the spectral
isotropy in the most general situations. Since the Riemann metrics
attached to the particle systems are  
not isotropic in general, this statement points out a major 
difference between our
model and Friedmann's cosmological model where the isotropy of the
space is one of his two assumptions. Our statement says that radiation
isotropy holds true also on non-isotropic spaces and the two 
isotropy-concepts are by no means equivalent.

\section{Basics of classical quantum theory.}

In this sections three topics of classical 
quantum theory are reviewed. The first one
describes the elements of de Broglie's theory associating waves to 
particles. The second resp. third ones are surveys on meson theory resp. 
Ginsburg-Landau-Zeeman and Schr\"odinger operators of charged particles. 
\subsection{Wave-particle association.}

In quantum theory, a particle with energy $E$ and momentum $\mathbf p$
is associated with a wave, 
$Ae^{\mathbf i(K\cdot Z-\omega t)}$, where
$K=(2\pi/\lambda )\mathbf n$ is the wave vector and
$\mathbf n$ is the wave normal. These quantities yield the following
relativistically invariant relations. 

For light quanta the most familiar relations are
\begin{equation}
\label{e=hnu}
E=\hbar\omega \quad ,\quad \mathbf p=\hbar K,
\end{equation}
where the length of the wave vector yields also the following equations:
\begin{equation}
\label{p=e/c}
\mathbf k=|K|=\frac{\omega}{c},\quad \mathbf k^2=\frac{\omega^2}{c^2},
\quad {\rm and}\quad 
|\mathbf p|=p=\frac{E}{c},\quad \mathbf p^2=\frac{E^2}{c^2}.
\end{equation}

For a material particle of rest mass $m$, the fundamental relation is 
 \begin{equation}
\label{e(m)}
\frac{E}{c}=\sqrt{ \mathbf p^2+m^2c^2},
\end{equation}
which can be established by the well known equations
 \begin{equation}
E=\frac{mc^2}{\sqrt{1-v^2/c^2}},\quad\quad \mathbf p=
\frac{m\mathbf v}{\sqrt{1-v^2/c^2}}
\end{equation}
of relativistic particle mechanics. 

The idea of de Broglie was that
(\ref{e=hnu}) should also be valid for a material particle such that 
(\ref{p=e/c}) must be replaced by
\begin{equation}
\label{p(m)=e/c}
\sqrt{\mathbf k^2+\frac{m^2c^2}{\hbar^2}}=\frac{\omega}{c},\quad\quad 
{\mathbf k^2+\frac{m^2c^2}{\hbar^2}}=\frac{\omega^2}{c^2}.
\end{equation}
In this general setting, $m=0$ corresponds to the light.

In wave mechanics, de Broglie's most general wave packets are 
represented by the Fourier integral formula:
\begin{equation}
\label{wavepack}
\psi (Z,t)=\int\int\int A(K_1,K_2,K_3)
e^{\mathbf i(\langle K,Z\rangle-\omega t)}dK_1dK_2dK_3,
\end{equation}
where $\omega$ is given by (\ref{p(m)=e/c}). 
In other words, a general wave appears as superposition of
the above plane waves. Instead of the familiar $X$, the vectors from the
3-space, $\mathbb R^3$, are denoted here by $Z$, indicating that the 
reformulated de Broglie waves will be introduced in the new theory
in terms of the so called
twisted Z-Fourier transform, which is performed just on the
center (alias Z-space) of the nilpotent group. The X-space of a
nilpotent group is complement to the Z-space and the integration
in the formula of twisted Z-Fourier transform does not apply to the 
X-variable. It applies just to the Z-variable. The above denotation is
intended to help to understand the renewed de Broglie waves more easily.
 
The above wave function, $\psi$, 
satisfies the relativistic scalar wave equation:
\begin{equation}
\label{waveeq}
\big(\nabla^2-\frac{1}{c^2}\frac{\partial^2}{\partial t^2}\big)
\psi (Z,t)=\frac{m^2c^2}{\hbar^2}\psi (Z,t),
\end{equation}
which statement can be seen by substituting (\ref{wavepack}) 
into this equation.
Then (\ref{p(m)=e/c}) implies (\ref{waveeq}), indeed. According to this 
equation, the wave function is an eigenfunction of the wave operator with
eigenvalue ${m^2c^2}/{\hbar^2}$. By this observation we get that 
the spectrum of the
wave operator is continuous and the multiplicity of each eigenvalue
is infinity. 

The Fourier integral formula 
(\ref{wavepack}) converts differential operators to multiplication
operators. Namely we have:
\begin{equation}
\frac{\partial}{\partial Z_j}\sim \mathbf iK_j\quad ,\quad 
\frac{\partial}{\partial t}\sim \mathbf i\omega .
\end{equation}
These correspondences together with (\ref{e=hnu}) yield the 
translational key:
\begin{equation}
\label{correl}
-\mathbf i\hbar\frac{\partial}{\partial Z_j}\sim p_j\quad ,\quad 
\mathbf i\hbar \frac{\partial}{\partial t}\sim E 
\end{equation}
between the classical quantities $\mathbf p$ and $E$ of classical mechanics
and the operators of wave mechanics.

In his lectures on physics \cite{p} (Vol. 5, Wave mechanics, pages 3-4), 
Pauli describes the
transition from the above relativistic theory to the non-relativistic
approximation as follows. In mechanics, for $v\,<<\,c$ and  $p\,<<\,mc$,
we have
 \begin{equation}
\label{pnonrel_1}
\frac{E}{c}=\sqrt{ \mathbf p^2+m^2c^2}\sim mc(1+\frac{1}{2}
\frac{p^2}{m^2c^2}+\dots )=\frac{1}{c}(mc^2+
\frac{1}{2}\frac{p^2}{m}+\dots ).
\end{equation}
From (\ref{p(m)=e/c}) we also obtain 
 \begin{equation}
\label{pnonrel_2}
\omega =\frac{E}{\hbar}=\frac{mc^2}{\hbar}+\frac{\hbar}{2m}k^2+\dots ,
\end{equation}
where $E=mc^2+E_{kin}$ and $E_{kin}=p^2/2m$. The non-relativistic wave
\begin{equation}
\label{nonrel_wavepack}
\tilde \psi (Z,t)=\int\int\int A(K_1,K_2,K_3)
e^{\mathbf i(\langle K,Z\rangle -\tilde \omega t)}dK_1dK_2dK_3,
\end{equation}
is defined in terms of
\begin{equation}
\label{pnonrel_3}
\tilde\omega =\frac{\hbar}{2m}\mathbf k^2=\omega -\frac{mc^2}{\hbar},
\end{equation}
which relates to the relativistic wave function by the formula:
\begin{equation}
\label{pnonrel_4}
\psi (Z,t)=e^{-\frac{\mathbf imc^2}
{\hbar}t} \tilde \psi (Z,t).
\end{equation}
Substitution into (\ref{waveeq}) yields then:
\begin{equation}
\label{pnonrel_5}
\nabla^2 \tilde \psi +\frac{m^2c^2}{\hbar^2}
\tilde \psi + 2\frac{\mathbf im}{\hbar}
\frac{\partial\tilde \psi}{\partial t}
-\frac{1}{c^2}\frac{\partial^2\tilde\psi}{\partial t^2}
=\frac{m^2c^2}{\hbar^2}\tilde\psi ,
\end{equation}
which is nothing but the non-relativistic wave equation:
\begin{equation}
\label{nonrel_waveeq}
\nabla^2 \tilde\psi + \mathbf i\frac{2m}{\hbar}
\frac{\partial\tilde\psi}{\partial t}
-\frac{1}{c^2}\frac{\partial^2\tilde\psi }{\partial t^2}=0. 
\end{equation}
 
The imaginary coefficient
ensures that there is no special direction in time. This equation is 
invariant under the transformations 
$t\to -t$ and $\tilde \psi\to\tilde \psi^{*}$,
whereby $\tilde \psi\tilde \psi^*$, 
where $*$ means conjugation of complex numbers, 
remains unchanged (this conjugation will be denoted by 
$\overline\psi$ later on). According to quantum theory,
the physically measurable quantity is not the wave
functions $\psi$ or $\tilde \psi$, but the probability densities 
$\psi\psi^*$ resp. $\tilde \psi\tilde \psi^*$.
 
\subsection{Meson theory.}

The above solutions of the wave equation strongly relate to the
theory of nuclear forces and mesons. 
To review this field, we literally quote Hideki Yukawa's Nobel Lecture
"Meson theory in its developments", 
delivered on December 12, 1949. Despite
the fact that elementary particle physics went through enormous 
developments since 1949, this review has been chosen
because it is highly suggestive regarding 
the physical interpretations of the 
mathematical models developed in this paper:

"The meson theory started from the extension of the concept of the 
field of force so as to include the nuclear forces in addition 
to the gravitational and electromagnetic forces. The necessity of 
introduction of specific nuclear forces, which could not be reduced 
to electromagnetic interactions between charged particles, was 
realized soon after the discovery of the neutron, which was to be 
bound strongly to the protons and other neutrons in the atomic nucleus. 
As pointed out by Wigner$^1$, specific nuclear forces between two
nucleons, each of which can be either in the neutron state or 
the proton state, must have a very short range of the order of 
10-13 cm, in order to account for the rapid increase of the 
binding energy from the deuteron to the alphaparticle. The binding 
energies of nuclei heavier than the alpha-particle do not increase 
as rapidly as if they were proportional to the square of the mass 
number A, i.e. the number of nucleons in each nucleus, 
but they are in fact approximately proportional to A. This 
indicates that nuclear forces are 
saturated for some reason. Heisenberg$^2$
suggested that this could be accounted for, if we assumed a force 
between a neutron and a proton, for instance, due
to the exchange of the electron or, more generally, due to the exchange
of the electric charge, as in the case of the chemical 
bond between a hydrogen atom and a proton. Soon afterwards, 
Fermi$^3$ developed a theory of beta-decay based on the hypothesis 
by Pauli, according to which a neutron,
for instance, could decay into a proton, 
an electron, and a neutrino, which
was supposed to be a very penetrating neutral particle with a 
very small mass. 

This gave rise, in turn, to the expectation 
that nuclear forces could be reduced to the exchange of a pair of 
an electron and a neutrino between two nucleons, just as 
electromagnetic forces were regarded as due to the exchange
of photons between charged particles. It turned out, however, that
the nuclear forces thus obtained was much too small$^4$, 
because the betadecay was a very slow process compared with 
the supposed rapid exchange of the electric charge responsible 
for the actual nuclear forces. The idea of the meson field was 
introduced in 1935 in order to make up this gaps. Original assumptions 
of the meson theory were as follows: 

I. The nuclear forces are described by a scalar field U, 
which satisfies the wave equation 
\begin{equation}
\label{y1}
\big(\frac{\partial^2}{\partial Z_1^2}+\frac{\partial^2}{\partial Z_2^2}
+\frac{\partial^2}{\partial Z_3^2}-\frac{1}{ c^2}
\frac{\partial^2}{\partial t^2}
-\kappa^2\big )U=0
\end{equation}
in vacuum, where x is a constant 
with the dimension of reciprocal length. Thus, the static potential 
between two nucleons at a distance r is proportional
to $\exp (-xr ) / rh$, the range of forces being given by $1/x$.

II. According to the general principle of quantum theory, the field U is
inevitably accompanied by new particles or quanta, which have the mass
\begin{equation}
\mu =\frac{\kappa\hbar}{ c}
\end{equation}
and the spin $0$, obeying Bose-Einstein statistics. The mass of 
these particles can be inferred from the range of nuclear forces. If 
we assume, for instance, $x = 5 \times 10^{12} cm$$^1$, we obtain 
$\mu\sim 200 m_e$, where $m_e$ is the mass of the electron.

III. In order to obtain exchange forces, we must assume that these mesons
have the electric charge $+ e$ or $- e$, and that a positive (negative) 
meson is emitted (absorbed) when the nucleon jumps from the proton state 
to the neutron state, whereas a negative (positive) meson is emitted 
(absorbed) when the nucleon jumps from the neutron to the proton. 
Thus a neutron and a proton can interact with each other by 
exchanging mesons just as two charged particles interact by 
exchanging photons. In fact, we obtain an exchange force of Heisenberg 
type between the neutron and the proton of the correct magnitude, 
if we assume that the coupling constant g between the nucleon and 
the meson field, which has the same dimension as the elementary
charge $e$, is a few times larger than $e$.

However, the above simple theory was incomplete in various respects. For
one thing, the exchange force thus obtained was repulsive for triplet 
S-state of the deuteron in contradiction to the experiment, and 
moreover we could
not deduce the exchange force of Majorana type, which was necessary in
order to account for the saturation of nuclear forces just at 
the alpha-particle. In order to remove these defects, 
more general types of meson fields including vector, pseudoscalar 
and pseudovector fields in addition to the scalar
fields, were considered by various authors$^6$. In particular, the 
vector field was investigated in detail, because it could give a 
combination of exchange
forces of Heisenberg and Majorana types with correct signs 
and could further account for the anomalous magnetic moments 
of the neutron and the proton qualitatively. Furthermore, the 
vector theory predicted the existence of noncentral forces between 
a neutron and a proton, so that the deuteron might have the electric 
quadripole moment. However, the actual electric quadripole
moment turned out to be positive in sign, whereas the vector theory
anticipated the sign to be negative. The only meson field, which gives the
correct signs both for nuclear forces and for the electric quadripole 
moment of the deuteron, was the pseudoscalar field$^7$. 
There was, however, another feature of nuclear forces, 
which was to be accounted for as a consequence of
the meson theory. Namely, the results of experiments on the scattering of
protons by protons indicated that the type and magnitude 
of interaction between two protons were, at least approximately, 
the same as those between a neutron and a proton, apart from 
the Coulomb force. Now the interaction between two protons or 
two neutrons was obtained only if we took into account the 
terms proportional to $g^4$, whereas that between a neutron and a
proton was proportional to $g^2$, as long as we were considering charged
mesons alone. Thus it seemed necessary to assume further:

IV. In addition to charged mesons, there are neutral mesons with the mass
either exactly or approximately equal to that of charged mesons. They must
also have the integer spin, obey 
Bose-Einstein statistics and interact with
nucleons as strongly as charged mesons. This assumption obviously 
increased the number of arbitrary constants in meson theory, 
which could be so adjusted as to agree with a variety of experimental
facts. These experimental facts could-not be restricted to those of
nuclear physics in the narrow sense, but was to include 
those related to cosmic rays, because we expected that mesons 
could be created and annihilated due to the interaction of cosmic 
ray particles with energies much larger than cosmic rays in 1937$^8$ 
was a great encouragement to further developments of
meson theory. At that time, we came naturally to the conclusion that the
mesons which constituted the main part of the hard component of cosmic
rays at sea level was to be identified with the mesons 
which were responsible for nuclear force$^9$. Indeed, cosmic 
ray mesons had the mass around $200 m_e$
as predicted and moreover, there was the definite evidence 
for the spontaneous decay, which was the consequence of the 
following assumption of the original mesonn theory :

V. Mesons interact also with light particles, i.e. electrons 
and neutrinos, just as they interact with nucleons, the only 
difference being the smallness of the coupling constant g’ in 
this case compared with g. Thus a positive (negative)
meson can change spontaneously into a positive (negative) electron and a
neutrino, as pointed out first by Bhabha$^10$. 
The proper lifetime, i.e. the mean
lifetime at rest, of the charged scalar meson, for example, is given by

\begin{equation}
\tau_0=2\big (\frac{\hbar c}{ (g^\prime)^2}\big ) 
\big (\frac{\hbar}{ \mu c^2}\big )
\end{equation}

For the meson moving with velocity $\nu$, the lifetime increases 
by a factor $1 /\sqrt{1- (\nu/c)2}$ 
due to the well-known relativistic delay of the moving clock.
Although the spontaneous decay and the velocity dependence of the lifetime
of cosmic ray mesons were remarkably confirmed by various 
experiments$^{11}$,
there was an undeniable discrepancy between theoretical and experimental
values for the lifetime. The original intention of meson 
theory was to account
for the beta-decay by combining the assumptions III and V together.
However, the coupling constant $g’$, which was so adjusted as to give the
correct result for the beta-decay, turned out to be too large 
in that it gave the lifetime $\tau_0$ of mesons of the order of 
$10^{-8} sec$, which was much smaller
than the observed lifetime $2 x 10^{-6} sec$. 
Moreover, there were indications,
which were by no means in favour of the expectation that cosmic-ray mesons
interacted strongly with nucleons. For example, the observed crosssection
of scattering of cosmic-ray mesons by nuclei was much smaller than
that obtained theoretically. Thus, already in 1941, 
the identification of the
cosmic-ray meson with the meson, which was supposed to be responsible
for nuclear forces, became doubtful. In fact, Tanikawa and Sakata$^{12}$ 
proposed in 1942 a new hypothesis as follows: 
The mesons which constitute the hard
component of cosmic rays at sea level are not 
directly connected with nuclear
forces, but are produced by the decay of heavier mesons which interacted
strongly with nucleons.
However, we had to wait for a few years before this two-meson hypothesis
was confirmed, until 1947, when two very important facts were discovered.
First, it was discovered by Italian physicists’s that the negative mesons
in cosmic rays, which were captured by lighter atoms, 
did not disappear instantly,
but very often decayed into electrons in a mean time interval of the
order of $10^{-6} sec$. This could be understood 
only if we supposed that ordinary
mesons in cosmic rays interacted very weakly with nucleons. 
Soon afterwards, Powell and others$^{14}$ discovered 
two types of mesons in cosmic rays,
the heavier mesons decaying in a very short time into lighter mesons. Just
before the latter discovery, the two-meson hypothesis was proposed by
Marshak and Bethe$^{15}$ independent of the Japanese 
physicists above mentioned.
In 1948 , mesons were created artificially in Berkeley$^{16}$ and 
subsequent experiments confirmed the general picture of two-meson theory. 
The fundamental assumptions are now$^{17}$

(i) The heavier mesons, i.e. n-mesons with the mass $m_\pi$, 
about $280 m_e$ interact
strongly with nucleons and can decay into lighter mesons, i.e. 
$\pi$-mesons and neutrinos with a lifetime of the order of 
$10^{-8} sec$; $\pi$-mesons have integer spin
(very probably spin $0$) and obey Bose-Einstein statistics. 
They are responsible for, at least, a part of nuclear forces. 
In fact, the shape of nuclear potential
at a distance of the order of $\hbar/m_\pi c$ or larger could 
be accounted for as due to
the exchange of $\pi$-mesons between nucleons.

(ii) The lighter mesons, i.e. $\mu$-mesons with the mass about $210 m_e$ 
are the main constituent of the hard component of cosmic rays 
at sea level and can decay into electrons and neutrinos with the 
lifetime $2 x 10^{-6} sec$. They have
very probably spin ${1\over 2}$ and obey Fermi-Dirac statistics. 
As they interact only
weakly with nucleons, they have nothing to do with nuclear forces.
Now, if we accept the view that $\pi$-mesons are the mesons that have been
anticipated from the beginning, then we may expect the existence of neutral
$\pi$-mesons in addition to charged p-mesons. Such neutral mesons, 
which have integer spin and interact as strongly as charged 
mesons with nucleons, must be very unstable, because each 
of them can decay into two or three photons$^18$.
In particular, a neutral meson with spin $0$ can decay into two photons and
the lifetime is of the order of $10^{-14} sec$ or even less than that. 
Very recently, it became clear that some of the experimental 
results obtained in Berkeley could be accounted for consistently 
by considering that, in addition to
charged n-mesons, neutral $\nu$-mesons 
with the mass approximately equal to
that of charged p-mesons were created 
by collisions of high-energy protons
with atomic nuclei and that each of these neutral 
mesons decayed into two
mesons with the lifetime of the order of 
$10^{-13} sec$ or less$^{19}$.
Thus, the neutral mesons must have spin $0$.
In this way, meson theory has changed a great deal during these fifteen
years. Nevertheless, there remain still many questions unanswered. Among
other things, we know very little about mesons heavier than $\pi$-mesons. 
We do not know yet whether some of the heavier mesons are responsible for
nuclear forces at very short distances. The present form of meson theory is
not free from the divergence difficulties, although recent development of
relativistic field theory has succeeded in removing some of them. We do not
yet know whether the remaining divergence 
difficulties are due to our ignorance
of the structure of elementary particles themselves$^{20}$.
We shall probably
have to go through another change of the theory, before we shall be
able to arrive at the complete understanding 
of the nuclear structure and of
various phenomena, which will occur in high energy regions.

1. E. Wigner, Phys. Rev., 43 (1933 ) 252.

2. W. Heisenberg, Z. Physik, 77 (1932) I; 78 (1932) 156; 80 (1933) 587

3. E. Fermi, Z. Physik, 88 (1934) 161.

4. I. Tamm, Nature, 133 (1934) 981; D. Ivanenko, Nature, 133 (1934) 981.

5. H. Yukawa, Proc. Phys.-Math. SOc.Japan, 17 (1935) 48 ; 
H. Yukawa and S. Sakata, ibid., I9 (1937) 1084.

6. N. Kemmer, Proc. Roy. SOc. London, A 166 (1938) 127; 
H. Fröhlich, W. Heitler, and N. Kemmer, 
ibid., 166 (1938) 154; H. J.. Bhabha, ibid., 
166 (1938) 501; E. C.
G. Stueckelberg, Helv.Phys. Acta, 
II (1938) 299; H. Yukawa, S. Sakata, and M.
Taketani, Proc. Phys.-Math. SOc. Japan, 20 (1938) 319; H. 
Yukawa, S. Sakata, M.
Kobayasi, and M. Taketani, ibid., 20 (1938) 720.

7. W. Rarita and J. Schwinger, Phys. Rev., 59 (1941) 436, 556.

8. C. D. Anderson and S. H. Neddermeyer, Phys. Rev., 
51 (1937) 884; J. C. Street
andE.C. Stevenson, ibid., 5I(1937) 1005; Y. Nishina, 
M.Takeuchi, and T. Ichimiya, ibid., 52 (1937) 1193.

9. H. Yukawa, Proc. Phys.-Math. Soc. Japan, 19 (1937) 712; 
J. R. Oppenheimer and R. Serber, Phys. Rev., 51 (1937) 1113; 
E. C. G. Stueckelberg, ibid., 53 (1937) 41.

IO. H. J. Bhabha, Nature,141 (1938) 117.

11. H. Euler and W. Heisenberg, Ergeb. Exakt. Naturw., I: 
(1938) I ; P. M. S. Blackett,
Nature, 142 (1938) 992; B. Rossi, Nature, 142 (1938) 993; 
P. Ehrenfest, Jr. and A. Freon, Coopt. Rend., 207 (1938) 853 ; 
E. J. Williams and G. E. Roberts, Nature, 145 (1940) 102.

12. Y. Tanikawa, Progr. Theoret. Phys. Kyoto, 2 (1947) 220; 
S. Sakata and K. Inouye, ibid., I (1946) 143.

13. M. Conversi, E. Pancini, and O. Piccioni, Phys. Rev., 71 ( 1947) 209.

14.. C. M. G. Lattes, H. Muirhead, G. P. S. Occhialini, 
and C. F. Powell, Nature, 159 (1947) 694; C. M. G. Lattes, 
G. P. S. Occhialini, and C. F. Powell, Nature, 160 (1947) 453,486.

15, R. E. Marshak and H. A. Bethe, Phys. Rev., 72 (1947) 506.

16. E. Gardner and C. M. G. Lattes, Science, 107 (1948) 270; 
W. H. Barkas, E. Gardner, and C. M. G. Lattes, Phys. Rev., 74 (1948) 1558.

17. As for further details, see H. Yukawa, Rev. Mod. Phys., 21 (1949) 474.
134 1949 H . Yukawa

18. S. Sakata and Y. Tanikawa, Phys. Rev., 57 (1940) 548; R. J. 
Finkelstein, ibid., 72 (1947) 415.

19. H. F. York, B. J. Moyer, and R. Bjorklund, Phys. Rev., 76 (1949) 187.

20. H. Yukawa, Phys. Rev., 77 (1950) 219."

\subsection{Zeeman and Schr\"odinger operators of electrons.}

In physics, the classical Zeeman operator of a charged particle
is: 
\begin{equation}
\label{land}
-{\hbar^2\over 2m}\Delta_{(x,y)} -
{\hbar eB\over 2m c\mathbf i}
D_z\bullet
+{e^2B^2\over 8m c^2}(x^2+y^2) +eV .
\end{equation}
Originally, this operator is considered 
on the 3-space expressed in terms of 
3D Euclidean Laplacian and 3D magnetic dipole momentum operators.
The latter operators are the first
ones where a preliminary version of spin concept appears in the history
of physics. This is the so called exterior or orbiting spin 
associated with the 3D angular momentum
\begin{eqnarray}
\label{3Dang_mom}
\mathbf P=(P_1,P_2,P_3)=\frac{1}{\hbar}Z\times\mathbf p\quad\quad
{\rm where}\\
P_1=\frac{1}{h}(Z_2p_3-Z_3p_2)=\frac{1}{\mathbf i}
\big(Z_2\frac{\partial}{\partial Z_3}-
Z_3\frac{\partial}{\partial Z_2}\big).
\dots
 \label{3Dangmom}
\end{eqnarray}
Note that the above 2D operator keeps only 
component $P_3$ of this angular momentum.
The components of the complete 3D angular momentum 
obeys the commutation relations:
\begin{equation}
[P_1,P_2]=\mathbf iP_3,\quad [P_1,P_3]=-\mathbf iP_2,\quad 
[P_2,P_3]=\mathbf iP_1.
\end{equation}

In the mathematical models, the particles are orbiting in complex 
planes determined by the complex structures associated with the component
angular momenta $P_j$, thus the 2D version plays more important role
in this paper than the 3D version. 
The 2D operator is obtained from the 3D operator 
by omitting $P_1$ and $P_2$ and by restricting the rest
part onto the $(x,y)$ plane.
If Coulomb potential $V$ is omitted, it is nothing but 
the {\it Ginsburg-Landau-Zeeman operator} of a charged
particle orbiting on the $(x,y)$-plane in a constant magnetic field
directed toward the z-axis.
The {\it magnetic dipole momentum operator}, which is the term involving 
$D_z\bullet :=x\partial_y-y\partial_x$ and which is associated with the
{\it angular momentum operator} $hD_z$, 
commutes with the 
rest part, $\mathbf O$, of the operator, therefore, splitting the
spectral lines of $\mathbf O$. The {\it Zeeman effect} is explained 
by this fine structure of the Zeeman operator.

The Hamilton operator represents the total energy of a given physical 
system. More precisely, the eigenvalues of this operator are the discrete
(quantized) energy values which can be assumed by the system. Thus,
correspondence (\ref{correl}) implies Schr\"odinger's wave equation 
\begin{equation}
\label{land}
-\big({\hbar^2\over 2m}\Delta_{(x,y)} -
{\hbar eB\over 2m c\mathbf i}
D_z\bullet
+{e^2B^2\over 8m c^2}(x^2+y^2) -eV\big)\psi =\mathbf i\hbar
\frac{\partial \psi}{\partial t}
\end{equation}
of an electron orbiting in the $(x,y)$-plane. 

As it well known, Schr\"odinger discovered first the
relativistic equation which is a second order differential operator 
regarding the $t$-variable. By the time that Schr\"odinger came to 
publish this equation, it had already been independently 
rediscovered by O. Klein and W. Gordon. This is why it is usually 
called Klein-Gordon equation. Numerous problems had arise regarding
this equation. Schr\"odinger became discouraged because it gave the
wrong fine structure for hydrogen. Some month later he realized, however,
that the non-relativistic approximation to his relativistic equation
was of value even if the relativistic equation was incorrect. This
non-relativistic approximation is the familiar Schr\"odinger equation.

Dirac also had great concerns about the Klein-Gordon equation. 
His main objection
was that the probabilistic quantum theory based on this equation produced
negative probabilities. Actually, the elimination of this problem led
Dirac to the discovery of his relativistic electron equation. By this
theory, however, proper probabilistic theory can be developed only
on the relativistic space-time. This feature was strongly 
criticized by
Pauli, according to whom such theory makes sense only on the space.

The journey toward an understanding of the nature of spin and its 
relationship to statistics
has been taking place on one of the most difficult and exciting routes
\cite{b,tom}. 
Although the Schr\"odinger wave equation
gives excellent agreement with experiment in predicting the frequencies
of spectral lines, small discrepancies are found, which can be explained
only by adding an intrinsic angular momentum to its usual orbital
angular momentum of the electron that acts as if it came from a spinning
solid body. The pioneers of developing this concept 
were Sommerfeld, Land\'e, and Pauli. They found that agreement 
with the Stern-Gerlach experiment
proving the existence of Zeeman effect 
can be obtained by assuming that the
magnitude of this additional angular momentum was $\hbar /2$. The
magnetic moment needed to obtain agreement was, however, $e\hbar /2mc$,
which is exactly the same as that arising from an orbital angular moment
of $\hbar$. The gyromagnetic ratio, that is, the ratio of magnetic moment
to angular momentum is therefore twice as great for electron spin as it is
for orbital motion.

Many efforts were made to connect this intrinsic angular momentum to an 
actual spin of the electron, considered it as a rigid body. In fact,
the gyromagnetic ratio needed is exactly that which would be obtained
if the electron consisted of a uniform spherical shell spinning about
a definite axis. The systematic 
development of such a theory met, however,
with such great difficulties that no one was able to carry it through
to a definite conclusion. Somewhat later, Dirac derived his above
mentioned relativistic
wave equation for the electron, in which the spin and charge were shown
to be bound up in a way that can be understood  only in connection
with the requirements of relativistic invariance. In the non-relativistic
limit, however, the electron still acts as if it had an intrinsic angular
momentum of $\hbar /2$. Prior to the Dirac equation, this non-relativistic
theory of spin was originally developed by Pauli.

Finally, we explain yet why the Coulomb operator does not appear in 
this paper and how can it be involved into the further investigations.  
The present ignorance is mainly due to the fact 
that also
the Hamilton operator (Laplacian) on nilpotent
groups involves no Coulomb potential. There appear, instead, 
nuclear potentials like those Yukawa described in meson theory. 
An other major distinguishing feature is that this Laplacian 
(Hamilton operator) includes also terms corresponding 
to the electron-positron-neutrino, which, by the standard model, is always
something of a silent partner in an electron-positron-system, because,
being electrically neutral, it ignores not only the nuclear force but
also the electromagnetic force. Although the operator corresponding 
to this silent partner will be established
by computations evolved by Pauli to determine the non-relativistic 
approximation, all these operators appear in the new theory 
as relativistic operators complying with Einstein's equation of general
relativity.   

The Coulomb force can be considered just later, after developing certain
explicit spectral computations. This spectral theory
includes also a spectral decomposition of the corresponding $L^2$-function
spaces such that the subspaces appearing in this decomposition are 
invariant with respect to the actions both of the Hamilton operator and
the complex Heisenberg group representation. Also the latter 
representation is naturally inbuilt into these mathematical models. 

The complications about the Coulomb operator
are due to the fact that these invariant subspaces 
(called also zones) are not
invariant regarding the Coulomb's multiplicative operator 
\cite{sz5}- \cite{sz7}.
In order to extend the theory also to electric fields, the Coulomb 
operator must be modified such that also this operator leaves the zones
invariant. Such natural zonal Coulomb operator can be defined for a 
particular zone such that,
for a given function $\psi$ from the zone, function $V\psi$ is projected
back to the zone. This modified Coulomb operator is the correct one
which must be added to the Laplace operator on the nilpotent Lie group
in order to have a relevant unified electro-magnetic particle theory.
 
However, this modified Coulomb force is externally 
added and not naturally inbuilt into the Laplacian (Hamiltonian) of 
the nilpotent Lie group. In order to construct such Riemann manifolds
whose Laplacian unifies the Ginsburg-Landau-Zeeman +neutrino+
nuclear operators also 
with an appropriate Coulomb operator, the mathematical models must be 
further developed such that,
instead of nilpotent Lie groups, one considers general nilpotent-type
Riemann manifolds and their solvable-type extensions. This generalization
of the theory,
which is similar to passing from special relativity to the general 
one, will be the third step in developing this theory.

\section{Launching the mathematical particle theory.}

The physical features of elementary particles 
are most conspicuously exhibited also by certain Riemann manifolds. 
A demonstration of this apparent physical content present
in these abstract mathematical structures is, for instance, 
that the classical Hamilton and Schr\"odinger operators
of elementary particle systems appear as Laplace-Beltrami
operators defined on these manifolds. Thus, these
abstract structures are really deeply inbuilt into the very fabric 
of the physical world which can serve also as fundamental tools for 
building up a comprehensive unified quantum theory.  

Yet, this physical content of these particular mathematical 
structures has never been recognized in the 
literature so far. The ignorance is probably due to the fact  
that this new theory is not in a direct genetic relationship
with fundamental theories such as the Gell-Mann$\sim$Ne'eman 
theory of quarks by which the standard model of elementary particles
has been established. Neither is a direct genetic connection to 
the intensively studied super string
theory, which, by many experts, is thought to be 
the first viable candidate ever for a 
unified quantum field theory of all of the elementary particles and their
interactions which had been provisionally described by the standard model. 
Actually, the relationship between the two 
approaches to elementary particle physics is more precisely described
by saying that there are both strong connections and
substantial differences between the standard and our new models. 
In order to clearly explain the new features of
the new model, we start with a brief review of the standard model. 
A review of string theory is, however, beyond the scope of this article.

\subsection{A rudimentary review of the standard model.} 
The Gell-Mann$\sim$Ne'eman
theory \cite{gn} of quantum chromodynamics (QCD) grew out, in the 60's, 
from Yang-Mills' \cite{ym} 
non-Abelian gauge theory where the
gauge group was taken to be the $SU(2)$ group of isotopic spin 
rotations, and the vector fields analogous to the photon field were
interpreted as the fields of strongly-interacting vector mesons of 
isotopic spin unity. QCD is also a non-Abelian gauge theory
where, instead of $SU(2)$, the symmetry group is
$SU(3)$. This model adequately described the rapidly growing
number of elementary particles by grouping the known baryons and
mesons in various irreducible representations of this gauge group.

The most important difference between this and the new mathematical 
model is that the Yang-Mills 
theories do not introduce a (definite or indefinite) Riemann metric
with the help of which all other physical objects are defined. They
rather describe the interactions by Lagrange functions which contain more 
than a dozen arbitrary constants, including those yielding the various
masses of the different kinds of particles. What makes this situation even 
worser is that all these important numbers are incalculable in principle.
Under such circumstances the natural Hamilton and wave operators
of particles can not emerge as Laplacians on Riemann manifolds.
This statement is the main attraction on the new models.

To speak more mathematically, the fundamental structures for 
non-Abelian gauge theories are principal fibre bundles over 
Minkwski space with compact non-Abelian structure group $SU(n)$ on which
a potential is defined by a connection $A$, with components $A_\mu$, 
in the Lie algebra $su(n)$. The field is the curvature whose
components are
$F_{\nu\mu}=\partial_\mu A_\nu -\partial_\nu A_\mu +[A_\mu ,A_\nu ]$.
The most straightforward generalization of Maxwell's equations are
the Yang-Mills equations $dF=0$ and  $d^*F=0$, where $d$ and $d^*$ are
covariant derivatives. Gauge theories possess an infinite-dimensional
symmetry group given by functions $g: M\to SU(n)$ and all physical
or geometric properties are gauge invariant.

To specify a physical theory the usual procedure is to define a 
Lagrangian. In quantum chromodynamics (QCD) such Lagrangian is to be 
chosen which is capable to portray the elementary particles in 
the following very rich complexity: 
The neutron and
proton are composite made of quarks. There are quark of six
type, or "flavors", the $u, c$, and $t$ quarks having charge $2/3$,
and the $d, s$, and $b$ quarks having charge $-1/3$ (these denotations
are the first letters of words: up, down, charm, strange, top, 
and bottom). Quarks of each
flavor come in three "colors" which furnish the defining representation
$\mathbf 3$  of the $SU(3)$ gauge group.
 
Quarks have the remarkable property 
of being permanently trapped inside "white" particles such 
as neutron, proton, baryon and meson. Baryon resp.
meson are color-neutral
bound states of three quarks resp. quarks and antiquarks. Neutron resp.
proton are barions consisting an up- and two down- resp. one down- and 
two up-quarks. Thus the neutron has no charge while, in the same units
in which the electron has an electric charge of $-1$, the 
proton has a charge of $+1$. The total charge of a white particle
is always an integer number. Only the quarks confined inside of them
can have non-integer charges. 

QCD can be regarded as the 
modern theory of strong nuclear forces holding the quarks together.  
With no scalar fields,
the most general renormalizable Lagrangian describing also these strong
interactions can be put in the form \cite{w2}
\begin{equation}
\label{QCDLagrange}
\mathcal L=-\frac 1{4}F^{\alpha \mu\nu}{F^\alpha}_{\mu\nu} 
-\sum_n\overline{\psi}_n[x^\mu(\partial_\mu-
\mathbf igA^\alpha_\mu t_\alpha )+m_n]\psi_n,
\end{equation}
where $\psi_n(x)$ is a matter field, $g$ is the strong coupling constant,
$t_\alpha$ are a complete set of generators of color $SU(3)$ in the 
$\mathbf 3$-representation (that is , 
Hermitian traceless $3\times 3$ matrices 
with rows and columns labelled by the three quark colors), normalized so 
that $Tr(t_\alpha t_\beta )=\delta_{\alpha\beta}/2$, and the subscript
$n$ labels quark flavors, with quark color indices suppressed. 
The first term is called matter Lagrangian density, the second one is
the gauge field. 

Just as the electromagnetic force between electrons is generated by
the virtual exchange of photons, so the quarks are bound to one another
by a force that comes from the exchange of other quanta, called gluons
because they glue the quarks together to make observable white objects.
The gluons are flavor blind, paying no attention to flavor, however,
they are very sensitive of color. They interact with color much as the
photon interacts with electron charge.

\subsection{More specifics about the new abstract model.}
The above sketchily described objects are the most fundamental 
concepts in QCD. 
Their properties are established by the Lagrangians introduced there. 
In order to compare them, we review some more details 
about the new theory. The concepts introduced here will rigorously
be establishment in the following sections.

The mathematical structures on which the new theory is built on 
are 2-step nilpotent Lie groups and their 
solvable extensions. Both type of manifolds are endowed
with natural left invariant metrics. In this scheme, the 
nilpotent group plays the role of space, on which always
positive definite metric is considered. This choice is dictated also by
the fact that the Hamilton operators of elementary particle 
systems emerge
as the Laplacians of these invariant Riemann metrics. In order to
ensure that these systems have positive energies,
just positive definite metric can be chosen on these manifolds; 
for indefinite metrics the Laplacian never 
appears as the Hamilton operator of a particle system.

Time can be introduced by adding  
new dimension to these nilpotent manifolds.
This can be implemented either by a simple Cartesian product with the 
real line $\mathbb R$, or
by the solvable extensions of nilpotent groups. Both
processes increase the dimension of the nilpotent groups
by $1$ and in both cases invariant indefinite metrics are 
defined such that the time-lines intersect the nilpotent subgroup
perpendicularly, furthermore, also 
$\langle\partial_t,\partial_t\rangle <0$ holds.
On these extended manifolds the Laplacian appears as the natural
wave operator (Schr\"odinger operator) attached to the particle
systems. The difference between the two constructions 
is that the first one 
provides a static model, while the second one is an
expanding model which yields the Hubble law of cosmology.

Although both are relativistic, these space-time concepts
are not quite the same than those developed in general relativity. 
Actually, Einstein's 4D space-time concept 
has no room for exhibiting the rich ``inner life'' of particles which
is attributed to them by meson theory, or, by the general 
standard model of elementary particle physics. This ``inner life" can
not be explained only by the properties of space-time.
For instance, the symmetries underlying the electroweak theory are
called internal symmetries, because one thinks of them as
having to do with the intrinsic nature of the particles, rather 
than their position or motion.
 
The abstract mathematical models, however, 
do make room for both the rich ``inner life" 
and ``exterior life" of particles. 
The main tool for exhibiting  
the inner physics is the center of the  
nilpotent Lie algebra, while the stage for the ``exterior life" 
(that is, for the motion
of particles) is the so called X-space denoted by $\mathcal X$. 
This space is a complement
of the center $\mathcal Z$, which is called also Z-space. 

The space-like Z-space exhibits, actually, 
dualistic features. The primary meaning of vectors lying in the center
is that they are the axes of angular momenta defined 
for the charged
particles which are orbiting in complex planes in 
constant magnetic 
fields standing perpendicular to these complex planes. Actually, this 
axis-interpretation of vectors, $Z$, is developed in
the following more subtle way: For any unit vector $Z$ there is a complex
structure, $J_Z$ acting on the X-space corresponded such that
the particles are orbiting in the complex planes defined by $J_Z$
along the integral curves of vector fields defined by $X\to J_Z(X)$. 
As it is pointed out in the next section,
this 2-step nilpotent Lie group is uniquely determined by the linear space,
 $J_{\mathcal Z}$, of skew endomorphisms $J_Z$. Bijection $Z\to J_Z$
provides a natural identification between $\mathcal Z$ and
$J_{\mathcal Z}$.
More precisely, the group can be
considered such that it is defined by a linear space of 
skew angular momentum
endomorphisms acting on a Euclidean space, $\mathcal X$, such that it is 
considered, primarily,
as an abstract space $\mathcal Z$ which is identified with the 
endomorphism space, $J_{\mathcal Z}$, by the natural bijection 
$Z\to J_Z$.    
Note that in this interpretation, the 
axis, $Z$, of the angular momentum, $J_Z$, is separated from 
the complex plane where the actual orbiting is taking place. Anyhow, 
from this point of view,
the Z-vectors exhibit space-like features. 

Whereas, 
the constant magnetic field defined by the structure pins 
down a unique inertia system on which relations
$B=constant$ and $E=0$ holds. Thus 
a naturally defined individualistic inner time is given 
for each of these particles. This time can be synchronized, allowing
to define also a common time, $T$, which defines the time 
both on the static models and the solvable extensions. 
From this point of view, the center exhibits time-like features.
This argument clarifies the contradiction between the 
angular-momentum-axis- and the customary time-axis-interpretation 
of the center of the Heisenberg groups.

Although the concepts of relativity and quantum theory 
appear in new forms, they 
should be considered as refined versions 
of the original classical objects. Beside the above one, an other example
for this claim is the new form by which 
de Broglie's waves are introduced
on these groups. The most important new feature 
is that the Fourier transform  
is defined only on the center, $\mathcal Z=\mathbb R^l$, 
by the following formula:
\begin{equation}
\label{newDiracwave}
\int_{\mathbb R^l}A(|X|,K)\prod z_i^{p_i}(K_u,X)\overline z_i^{q_i}(K_u,X)
e^{\mathbf i(\langle K,Z\rangle-\omega t)}dK,
\end{equation}
where, for a fixed complex basis $\mathbf B$, complex coordinate system 
$\{ z_i(K_u,X)\}$ on the X-space is defined regarding the complex
structure $J_{K_u}$, for all unit Z-vector $K_u$. This so called
twisted Z-Fourier transform binds the Z-space and the X-space together
by the polynomials $\prod z_i^{p_i}\overline z_i^{q_i}$ which depend
both on the X- and K-variables. It appears also in several other 
alternative forms. Due to this complexity of the wave functions,
the three main forces: the electromagnetic; the weak; and
the strong forces of particle theory can be introduced in a 
unified way such that each of them
can be expressed in terms of the weak forces.

The main objects on these abstract structures are the Laplace operators 
considered both on nilpotent and solvable groups. 
They turn out to be the Hamilton resp. Schr\"odinger operators of the 
particle-systems represented by these metric groups. 
Due to the fact that these
operators are Laplace operators on Riemann manifolds, 
the conservation of energy is
automatically satisfied. In classical quantum theory 
the Hamilton function,
which counts with the total energy of a system, is replaced
by the Hamilton operator whose discrete eigenvalues are the quantized
energy levels on which the system can exist.  
On the relativistic mathematical models the total energy is encoded 
into the Einstein tensor (stress-energy tensor) of the indefinite 
Riemannian metrics. In the quantum theory developed on these manifold,
this stress-energy tensor is replaced by the Laplacians of 
these manifolds, which are, actually, the Hamilton resp. 
Schr\"odinger operators of the
particle-systems represented by these models. In other words, this is a
correspondence principle associating the Laplacian resp. the 
eigenfunction-equations to the Einstein tensor resp. 
Einstein equation defined on these indefinite Riemann manifolds.

Let it be emphasized again that this theory will be evolved
gradually without adding any new objects to those defined mathematically 
on these abstract structures. 
The main focus is going to be to rediscover the most important 
physical features which are known by the standard model. 
Before starting this exploration, 
we describe, yet, a more definite bond between the two models.

\subsection{Correspondence principle bridging the two models.}
Correspondence principle associating 2-step nilpotent groups
to $SU(n)$-mo\-dels can also be introduced.
It can be considered such that, to the Lagrange functions defined
on the $SU(n)$-models, 2-step metric nilpotent Lie groups are 
corresponded. The combination of this correspondence principle with the
above one associates the Laplacian of the metric group 
to the Lagrange function defined on the Yang-Mills model. 
This association explains why are
the conclusions about the nature of 
electromagnetic, strong, and weak
forces so similar on the two models. This bridge
can be built up as follows.

As it is explained above,
the invariant Riemann metrics defined on 2-step nilpotent groups, 
modelling the particle systems in the new
theory, can be defined for any linear space, $J_{\mathcal Z}$, 
of skew endomorphisms acting on the X-space. Thus, for a
faithful representation, $\rho$, 
of $su(n)\subset so(2n)$ in the Lie algebra of real orthogonal 
transformations acting on a Euclidean space $\mathcal X$, one can define 
a natural 2-step nilpotent metric group by the endomorphism space 
$J_{\mathcal Z}=\rho (su(n))\subset so(\mathcal X)$ whose X-space
is $\mathcal X$ and the Z-space is the abstract linear space 
$\mathcal Z=\rho (su(n))$. 
Actually, this is the maximal 2-step nilpotent metric 
group which can be corresponded to a Yang-Mills principal fibre bundle
having structure group $SU(n)$, and whose fibres consists of orthonormal 
frames of $\mathcal X$ on which the action of $\rho (SU(n))$ is
one-fold transitive. Note that this group is still independent from
the Yang-Mills connection $A_\mu$ by which the Yang-Mills field 
(curvature), $F_{\mu\nu}$, is defined. 

By the holonomy group 
$Hol_p(A)\subset\rho (SU(n))_p$, 
defined at a fixed
point $p$ of the Minkowski space, 
groups depending on
Yang-Mills fields can also be introduced. 
Even gauge-depending groups can be
constructed, by sections $\sigma :M\to\tilde M$, where
$\tilde M$ denotes the total space of the fibre bundle 
and $\sigma (p)$ is lying in the fibre
over the point $p$. In this case the endomorphism space is
spanned by the skew 
endomorphisms $A_\mu (\sigma (p))$ considered for all $p\in M$ and indices
$\mu$. This construction depends on sections $\sigma (p)$. Since the
gauge group is transitive on the set of these sections, this 
correspondence is not gauge invariant. 

These correspondences 
associate 2-step nilpotent metric groups also to the    
representations of the particular Lie algebras 
$su(2)\subset so(4)$ resp. $su(3)\subset so(6)$ by which
the Yang-Mills- resp. Gell-Mann$\sim$Ne'eman-models are introduced.
This association does not mean, however, the equivalence of the two
theories. The X-space (exterior world) of the associated group is
the Euclidean space $\mathcal X$ where the skew endomorphisms 
from $\rho (su(n))$ are acting. The Z-space is the abstract
space $\mathcal Z=\rho (\mathit g)$ where $\mathit g$ can be any
of the subspaces of $su(n)$ which were introduced above. 
In other words, for a Yang-Mills model, X-space $\mathcal X$ is already 
given and angular momentum
endomorphism space is picked up in $\rho (su(n))$ in order to define
the Z-spaces of the corresponded nilpotent groups. The space-time, 
including both the exterior and interior worlds, are defined 
by these spaces. Note that this construction completely ignores the
Minkowski space which is the base-space for the Yang-Mills principle
fibre bundle. These arguments also show, 
that the nilpotent groups corresponded to a fixed 
Yang-Mills model are not uniquely determined, they depend on the Z-space
chosen on the YM-model.

Whereas, on a YM-model, the exterior world is the Minkowski space over
which the principal fibre bundle is defined and the interior world is
defined there by $\rho (su(n))$. An other fundamental difference is
that YM-models are based on submersion-theory, however, not this is the
case with the new models. For instance, the Hamilton operators of
particles never appear as sub-Laplacians defined on the X-space.
Quite to the contrary, these Hamilton operators are acting
on the total $(X,Z)$-space, binding the exterior and interior worlds
together into an unbroken unity not characteristic for submersions. 
These are the most important roots explaining
both the differences and similarities between the two models. 
These arguments also show
that the new theory deals with much more general situations 
than those considered in Yang-Mills' resp. Gell-Mann$\sim$Ne'e\-man's
$SU(2)$- resp. $SU(3)$-theories. It goes far beyond the 
$SU(n)\times\dots\times SU(n)$-theories. 
Since the Laplacians are natural Hamilton operators of 
elementary particles also in these 
most general situations, there is no reason to deny their involvement
into particle theory. 

Differences arise also regarding the Maxwell theory of electromagnetism.
The $SU(n)$-theories are non-Abelian gauge theories where 
the field is the curvature, $F$, of a Yang-Mills connection (potential)
and the properties are gauge invariant regarding an infinite dimensional
gauge group whose Lie algebra consists of $su(n)$-valued 1-forms, 
$\omega$, satisfying $d\omega =F$. The 
customary reference to this phenomenon
is that only $F$ is the physical object and 1-forms $\omega$ do not have 
any physical significance, they are the results of mere mathematical 
constructions. This interpretation strongly contrasts the Aharanov-Bohm
theory where these vector potentials do have physical meanings by which
the effect bearing their names can be established \cite{ab}, \cite{t}. 
Whereas, on the nilpotent groups and
their solvable extensions
the fundamental fields are the natural invariant Riemann metrics, $g$, 
by which all the other
objects are defined. The classical Hamilton resp. Schr\"odinger operators
emerge as Laplacians of metrics $g$.
Curvature $F$ corresponds to the field 
$g(J_Z(X),Y)$ in this interpretation. 
Since the basic objects are not invariant regarding their actions, 
the gauge-symmetries are not involved to these investigations. One-form 
$\omega_Z(Y)=(1/2)g(J_Z(X),Y)$, defined over the points $X\in\mathcal X$,
is the only element of the Lie algebra of the gauge-symmetry group
which is admitted to these considerations. It defines the constant
magnetic field and vanishing electric field which are 
associated with the orbiting spin and inner time $T$. 

This interpretation 
shows that this model breaks off, at some point, 
from the Maxwell theory whose greatest achievement was that it unified
the, until 1865, separately handled partial theories of magnetism
and electricity. The Yang-Mills gauge theory is a generalization of this
unified theory to vector valued fields and potentials. The nilpotent
Lie group model approaches to the electromagnetic 
phenomena from a different
angle. Since the potentials do have significance there, 
it stands closer to the Aharanov-Bohm theory
than to the Maxwell-Yang-Mills gauge theory. Actually the full recovery 
of electromagnetism under this new 
circumstances will be not provided in this
paper. Note that only the magnetic field has been appeared so far
which is not associated with a non-trivial electric field. In other
words, the magnetism and electricity emerge as being separated 
in this paper. 
The reunion of this temporarily separated couple can be established just
at a later point after further developing this new theory.

This bridge explains the great deal 
of properties which manifest similarly on these two models, however,
turning from one model to the other one is not simple and the
complete exploration of overlapping phenomena requires further extended
investigations.
In this paper the first order task is to firmly 
establish the point about the new model.
Therefore, when several properties are named by the same name 
used in QCD, their definition strictly refer to our
setting. Whereas, by these deliberately chosen names,  
the similarities between the two theories are indicated.

\section{Two-step nilpotent Lie groups.}
\subsection{Definitions and interpretations.}
A 2-step nilpotent metric Lie algebra, 
$\{\mathcal N ,\langle ,\rangle\}$, is defined on a real vector space 
endowed with a positive definite inner product.
The name indicates that the 
center, $\mathcal Z$, can be reached
by a single application of the Lie bracket, thus its second application
always results zero. 
The orthogonal complement of the center is denoted by $\mathcal X$. 
Then the Lie bracket operates among these subspaces according to
the following formulas: 
\begin{equation}
[\mathcal N,\mathcal N]=\mathcal Z\quad ,\quad 
[\mathcal N,\mathcal Z]=0\quad ,\quad 
\mathcal N=\mathcal X\oplus\mathcal Z=\mathbb R^{k}\times\mathbb R^l.
\end{equation}
Spaces $\mathcal Z$ and $\mathcal X$ are called also Z- and
X-space, respectively. 

Upto isometric isomorphisms, such a Lie algebra is uniquely
determined by the linear space, $J_{\mathcal Z}$, of skew endomorphisms 
$J_Z:\mathcal X\to\mathcal X$ defined for any
$Z\in\mathcal Z$ by the formula 
\begin{equation}
\label{brack}
\langle [X,Y],Z\rangle =\langle J_Z(X),Y\rangle , 
\forall Z\in\mathcal Z.
\end{equation}
This statement means that for an orthogonal direct sum, 
$\mathcal N=\mathcal X\oplus\mathcal Z=\mathbb R^{k}\times\mathbb R^l$,
of Euclidean spaces a non-degenerated linear map, 
$\mathbb J:\mathcal Z\to SE(\mathcal X)\, ,\, Z\to J_Z$,
from the Z-space into the space of skew endomorphisms acting on the 
X-space, defines a 2-step nilpotent metric Lie algebra on $\mathcal N$ by 
(\ref{brack}). Furthermore, an other non-degenerated linear map 
$\tilde{\mathbb J}$ having the same range
$\tilde J_{\mathcal Z}=J_{\mathcal Z}$ as 
$\mathbb J$ defines isometrically isomorphic Lie algebra. 

By means of the exponential map, also
the group can be considered such that it is defined on $\mathcal N$.
That is, a point is denoted by $(X,Z)$ on 
the group as well. Then, the group multiplication is given
by the formula
$(X,Z) (X^\prime ,Z^\prime)=(X+X^\prime ,Z+Z^\prime 
+{1\over 2}[X.X^\prime ])$.  
Metric tensor, $g$, is defined by the left invariant extension of 
$\langle ,\rangle$ onto the group $N$.

Endomorphisms $J_Z(.)$ will be associated with angular momenta. It must
be pointed out, however,
a major conceptual difference between the classical
3D angular momentum, introduced in (\ref{3Dangmom}), and this new sort
of angular momentum.
For a fixed axis $Z\in \mathbb R^3$
the endomorphism associated with the classical 3D angular momentum
is defined with the help of the cross product $\times$ by
the formula $J_Z: X\to Z\times X$. That is, 
axis $Z$ is lying in the same
space, $\mathbb R^3$, where the endomorphism itself is acting.   
Linear map $\mathbb J:\mathcal Z\to SE(\mathcal X)$ on a 2-step nilpotent
Lie group, however, separates 
axis $Z\in\mathcal Z$ from the X-space where the
endomorphism $J_Z(.)$ is acting.
In other words, the latter endomorphism defines just orbiting of
a position vector $X$ in the plane spanned 
by $X$ and $J_Z(X)$, but the axis of
orbiting is not in the X- but in the Z-space. 

In this respect, the Z-space
is the abstract space of the axes associated with the angular momentum
endomorphisms. According to this interpretation, 
for a fixed axis $Z$ in the Z-space, a particle occupies a 
complex plane in the complex space
defined by the complex structure $J_Z$ on the X-space. 
Abstract axis, $Z$, is considered
as an ``inner dial" associated with the particles, which is represented
separately in the Z-space. This Z-space contributes new dimensions to 
the X-space which is considered as the inner-world supplemented to the
exterior-world in order to have a natural stage for describing the inner
physics of elementary particles.   
 
The above definition of 2-step nilpotent Lie groups
by their endomorphism spaces $J_{\mathcal Z}$ shows the large variety
of these groups. For instance, if $\mathcal Z$ is an $l$-dimensional 
Lie algebra of a compact group and  
$\mathbb J:\mathcal Z\to so(k)$ (which corresponds $J_Z\in so(k)$
to $Z\in \mathcal Z$) is its representation
in a real orthogonal Lie algebra (that is, in the Lie algebra of
skew-symmetric matrices) defined for $\mathcal X=\mathbb R^k$,
then the system 
$\{\mathcal N=\mathcal X\oplus \mathcal Z, J_{\mathcal Z}\}$
defined by orthogonal direct sum determines a unique 2-step nilpotent
metric Lie algebra where the inner product on $\mathcal Z$ is 
defined by $\langle Z,V\rangle =-Tr(J_{Z}\circ J_V)$.
Thus,
any faithful representation $\mathbb J:\mathcal Z\to so(k)$
determines a unique
two-step nilpotent metric Lie algebra. Since Lie algebras 
$su(k/2)\subset so(k)$ used in non-Abelian gauge theories 
are of
compact type, therefore, to any of their representations in orthogonal
Lie algebras, one can associate a natural two-step nilpotent metric 
Lie algebra. This association is the natural bridge between a non-Abelian
$SU(n)$-theory and the new theory developed in this paper.

The 2-step nilpotent Lie groups form even a much 
larger class than those constructed above 
by orthogonal Lie algebra representations. In fact, in the most general
situation, endomorphism space $J_{\mathcal Z}$ is just
a linear space defined by the range of a non-degenerated linear map
$\mathbb J:\mathcal Z\to SE(\mathcal X)$ 
which may not bear any kind of Lie algebra structure. 
However, such general groups
can be embedded into those constructed by the 
compact Lie algebras, $J_{\tilde{\mathcal Z}}$. The smallest such
Lie algebra is generated by the endomorphism space  
$J_{\mathcal Z}$ by the Lie brackets.        

Very important particular 2-step nilpotent Lie groups are the
Heisenberg-type groups, introduced by Kaplan \cite{k}, which are
defined by endomorphism spaces $J_{\mathcal Z}$
satisfying the Clifford condition $J^2_Z=-\mathbf z^2id$, where 
$\mathbf z=|Z|$ denotes the length of the corresponding vector. 
These groups are attached to Clifford modules 
(representations of Clifford algebras). 
The well known classification 
of these modules provides  
classification also for the Heisenberg-type groups. 
According to this classification, 
the X-space and the endomorphisms
appear in the following form: 
\begin{equation}
\mathcal X=
(\mathbb R^{r(l)})^a\times (\mathbb{R}^{r(l)})^b\, ,\,
J_{Z} =
(j_{Z} \times\dots\times j_Z) \times (-j_Z\times\dots\times -j_Z),
\end{equation}
where $l=dim(\mathcal Z)$ and the endomorphisms $j_Z$ act on the 
corresponding component,
$\mathbb R^{r(l)}$, of this Cartesian product. The
groups and the corresponding natural metrics are denoted by 
$H^{(a,b)}_l$ and $g^{(a,b)}_l$ respectively.  
Particularly important examples are the 
H-type groups $H^{(a,b)}_3$, where the
3-dimensional Z-space, $\mathbb R^3$, 
is considered as the space of imaginary 
quatrnions, furthermore, action of $j_Z$ on the space
$\mathbb R^{r(3)}=\mathbb H=\mathbb R^4$
of quaternoinic numbers is defined by left multiplications with $Z$. 

A brief account  on the classification of Heisenberg 
type groups is as follows.
If $l=dim(J_{\mathcal Z})\not =3\mod 4$, then, upto equivalence, 
there exist exactly one
irreducible H-type endomorphism space acting 
on a Euclidean space $\mathbb R^{n_l}$,
where the dimensions $n_l$, which depend just on $l$, 
are described below. These endomorphism spaces
are denoted by $J_l^{(1)}$. If $l=3\mod 4$, then, upto equivalence, 
there exist exactly
two non-equivalent irreducible H-type endomorphism spaces acting on
$\mathbb R^{n_l}$. They are denoted by 
$J_l^{(1,0)}$ and
$J_l^{(0,1)}$ 
respectively. They relate to each other by the relation 
$J_l^{(1,0)}\simeq -J_l^{(0,1)}$. 

The values $n_l$ corresponding to
$
l=8p,8p+1,\dots ,8p+7
$
are

\begin{eqnarray}
n_l=2^{4p}\, ,\, 2^{4p+1}\, , \, 2^{4p+2}\, , \,
2^{4p+2}\, ,
\, 2^{4p+3}\, ,\, 2^{4p+3}\, , \, 2^{4p+3}\, , \,
2^{4p+3}.
\label{cliff}
\end{eqnarray}

The reducible Clifford endomorphism spaces can be built up by these
irreducible ones. They are denoted by 
$J_l^{(a)}$ resp. $J_l^{(a,b)}$.
The corresponding Lie algebras are denoted by 
$\mathcal H^{(a)}_r$ and
$\mathcal H^{(a,b)}_l$ respectively, 
which define the groups 
$H^{(a)}_r$ resp. 
$H^{(a,b)}_l$. 
In the latter case, the X-space 
is defined by the $(a+b)$-times product 
$\mathbb R^{n_l}\times\dots\times\mathbb R^{n_l}$
such that, on the last $b$ component, the action of a $J_Z$ is defined by 
$J_Z^{(0,1)}\simeq
-J_Z^{(1,0)}$,
and, on the first $a$ components, the action is defined by
$J^{(1,0)}_Z$. In the first case this process should be applied only on 
the corresponding $a$-times product.  

One of the fundamental statements in this theory is that, in case of 
$l=3\mod 4$, two groups  $H^{(a,b)}_l$ and $H^{(a^\prime ,b^\prime )}_l$
are isometrically isomorphic if and only if $(a,b)=(a^\prime ,b^\prime )$
upto an order. By a general statement, two metric
2-step nilpotent Lie groups with Lie algebras
$\mathcal N=\mathcal X\oplus\mathcal Z$ and
$\mathcal N^\prime =\mathcal X^\prime\oplus\mathcal Z^\prime$
are isometrically isomorphic if and only if there exist orthogonal
transformations $A :\mathcal X\to\mathcal X^\prime$ and
$B :\mathcal Z\to\mathcal Z^\prime$
such that $J_{B (Z)}=A\circ J_Z\circ A^{-1}$
holds, for all $Z\in\mathcal Z$. The isomorphic isometry between
$H^{(a,b)}_l$ and $H^{(b,a)}_l$ is defined by $A =id$ and 
$B =-id$. If
$(a,b)\not =(a^\prime ,b^\prime )$ (upto an order) then the
corresponding groups are not isometrically isomorphic.  

In order to unify the two cases, denotations
$J_l^{(1,0)}=J_l^{(1)}$ and
$J_l^{(0,1)}=-J_l^{(1)}$ are used also in cases
$l\not =3\mod 4$. One should keep in mind, however, that these 
endomorphism spaces are equivalent, implying that 
two groups  $H^{(a,b)}_l$ and $H^{(a^\prime ,b^\prime )}_l$ 
defined by them
are isometrically isomorphic if and only if 
$a+b=a^\prime +b^\prime$ holds.

H-type groups can be characterized as being such particular 2-step metric 
nilpotent Lie groups on which
the skew endomorphisms, $J_Z$, for any fixed $Z\in\mathcal Z$, 
have the same eigenvalues $\pm \mathbf z\mathbf i$.
By polarization we have:
\begin{equation}
\frac{1}{2}(J_{Z_1}J_{Z_2}+ J_{Z_2}J_{Z_1})=-\langle Z_1,Z_2\rangle Id, 
\end{equation}
which is called Dirac's anticommutation equation. It implies
that two endomrphisms, $J_{Z_1}$ and $J_{Z_2}$, with
perpendicular axes, $Z_1\perp Z_2$, 
anticommute with each other. Endomorphism
spaces satisfying this weaker property define more general, so called
totally anticommutative 2-step nilpotent Lie groups on which the 
endomorphisms can have also properly distinct eigenvalues. 
The classification
of these more general groups is unknown in the literature. 

Let it be mentioned, yet, that groups constructed above by 
$su(2)$-repre\-sen\-tations  
are exactly the groups $H^{(a,b)}_3$, while those constructed 
by $su(3)$-rep\-re\-sen\-ta\-tions are 
not even totally commutative spaces. 
Thus, they are
not H-type groups either. It is also noteworthy, that the Cliffordian 
endomorphism spaces $J_l^{(a,b)}$ do not form a Lie algebra in general.
In fact, only the endomorphism spaces defined for $l=1,3,7$ can
form a Lie algebra. In the first two cases, pair $(a,b)$ can be arbitrary,
while in case $l=7$ only cases $a=1,b=0$, or, $a=0,b=1$ yield Lie algebras.
It is an interesting question that which orthogonal Lie algebras can be
generated by Cliffordian endomorphism spaces $J_l{(a,b)}$.
Let it be mentioned, yet, that Lie algebra
$su(3)$ does not belong even to this category.

\subsection{Laplacian and curvature.}

Although most of the results of this paper extend to the most general 
2-step metric nilpotent Lie groups, in what follows only H-type groups
will be considered. 
On these groups, the Laplacians appear in the following form:
\begin{equation}
\label{Delta}
\Delta=\Delta_X+(1+\frac 1{4}\mathbf x^2)\Delta_Z
+\sum_{\alpha =1}^r\partial_\alpha D_\alpha \bullet,
\end{equation}
where $D_\alpha\bullet$ denotes directional derivatives along
the vector fields 
$X\to J_\alpha (X)=J_{e_\alpha}(X)$, furthermore, $\mathbf x=|X|$
denotes the length of X-vectors.

This formula can be established by the following explicit formulas. 
Consider orthonormal bases $\big\{E_1;\dots;E_k\}$ and 
$\big\{e_1;\dots;e_l\big\}$ on the X- and Z-space respectively.
The coordinate systems defined by them are 
denoted by
$\big\{x^1;\dots;x^k\big\}$ and $\big\{z^1;\dots;z^l\big\}$ respectively.
Vectors $E_i;e_{\alpha}$ extend into the left-invariant 
vector fields
\begin{eqnarray}
\label{invar-vect}
\mathbf
X_i=\partial_i + \frac 1 {2} \sum_{\alpha =1}^l
\langle [X,E_i],e_{\alpha}\rangle  \partial_\alpha = 
\partial_i + \frac 1 {2} \sum_{{\alpha} =1}^l \langle
J_\alpha\big(X\big),E_i\rangle
\partial_\alpha 
\end{eqnarray}
and $\mathbf Z_{\alpha}=\partial_\alpha$, respectively,
where $\partial_i=\partial /\partial x^i$, $\partial_\alpha
=\partial/\partial z^\alpha$
and $J_\alpha = J_{e_\alpha}$.

The covariant derivative acts on these
invariant vector fields according to the following formulas.
\begin{equation}
\label{nabla}
\nabla_XX^*=\frac 1 {2} [X,X^*]\quad ,
\quad\nabla_XZ=\nabla_ZX=-\frac 1 {2}
J_Z\big (X\big)\quad ,\quad\nabla_ZZ^*=0.
\end{equation}

The Laplacian, $\Delta$, acting on functions can 
explicitly be established by substituting (\ref{invar-vect}) and 
(\ref{nabla}) into
the following well known formula
\begin{equation}
\label{inv-delta}
\Delta=\sum_{i=1}^k\big(\mathbf X_i^2-
\nabla_{\mathbf X_i}\mathbf X_i\big)
+\sum_{\alpha =1}^l\big (\mathbf Z_{\alpha}^2-\nabla_{\mathbf Z_{\alpha}} 
\mathbf Z_{\alpha}\big ).
\end{equation}

These formulas allow to compute also the Riemannian curvature, $R$, on 
$N$ explicitly. Then we find:

\begin{eqnarray}
R(X,Y)X^*=\frac 1 {2} J_{[X,Y]}(X^*) - 
\frac 1 {4} J_{[Y,X^*]}
(X) + \frac 1 {4} J_{[X,X^*]}(Y); \\
R(X,Y)Z=-\frac 1 {4} [X,J_Z(Y)]+
\frac 1 {4} [Y,J_Z(X)] ;
\quad R(Z_1,Z_2)Z_3=0; \\
R(X,Z)Y=-\frac 1 {4} [X,J_Z(Y)] ; \quad 
R(X,Z)Z^*=-\frac 1 {4}
J_ZJ_{Z^*}(X); \\
R(Z,Z^*)X=-\frac 1 {4} J_{Z^*}J_Z(X) + \frac 1 {4}J_Z
J_{Z^*}(X),
\end{eqnarray}
where $X;X^*;Y \in \mathcal X$ and $Z;Z^*;Z_1;Z_2;Z_3 \in \mathcal Z$
are considered as the elements of the Lie algebra $\mathcal N$.
The components of this tensor field on coordinate systems
$\big\{x^1;\dots;x^k;z^1;\dots;z^l\big\}$ 
can be computed by formulas (\ref{invar-vect}).

\section{Particles without interior.}

These Riemann manifolds were used, originally \cite{sz1}-\cite{sz4}, 
for isospectrality constructions  
in two completely different situations. In the first
one, the Z-space is factorized by a Z-lattice, $\Gamma_Z$, defined on the
Z-space, which process results a torus bundle over the X-space. In the 
second case, Z-ball resp. Z-sphere bundles are considered 
by picking Z-balls resp. Z-spheres in the Z-space over the points of the
X-space. It turns out that, apart from a constant term, 
the Laplacian on the 
Z-torus bundles, called also Z-crystals, 
is the same as the classical Ginsburg-Landau-Zeeman operator
of an electron-positron system 
whose orbital angular momentum is expressed in terms of the endomorphisms
$J_{Z_\alpha}$ defined by the lattice points $Z_\alpha\in \Gamma_Z$.
More precisely, the Z-lattice defines a natural decomposition 
$\sum_\alpha W_\alpha$ of the $L^2$-function space
such that the components $W_\alpha$ are invariant under the action of 
the Laplacian, which, after restricting it onto a fixed $W_\alpha$, 
appears as the Ginsburg-Landau-Zeeman operator whose 
orbital angular momentum 
is associated with the fixed endomorphism $J_{Z_\alpha}$. 

It turns out, in the next chapters, that the constant
term corresponds to neutrinos (massless particles with no charge) 
accompanying an electron-positron system. Thus, 
altogether, the Laplacian appears as the Hamilton operator of a system
of electrons positrons and 
electron-positron-neutrinos which is formally the sum of
a Ginsburg-Landau-Zeeman operator and a constant term. Names 
Ginsburg-Landau
indicate that no Coulomb potential or any kind of electric forces are 
involved into this operator. Thus the forces which manifest themself
in the eigenfunctions of the Laplacian, 
are not the complete electromagnetic
forces, yet. However, when also these forces will be introduced, the 
eigenfunctions remain the same, the electric force contributes 
only to the magnitude of the eigenvalues. By this reason, 
the forces associated with these
models are called electromagnetic forces. Since the Z-lattices consist
of points and intrinsic physics is exhibited on the Z-space, particles
represented by Z-crystals are considered as point-like particles
having no insides. The theory developed for them is in the
strongest connection with quantum electrodynamics (QED).   

In the second case both the Laplacian 
and the angular
momentum operator appear in much more complex forms. Contrary
to the first case, the angular
momentum operator is not associated with a fixed Z-vector, but it
represents spinning about each Z-vector. 
Beside the orbiting spin, also natural 
inner operators emerge which can be 
associated both with weak and strong 
nuclear forces. The particles represented by these models do have 
inside on which the intrinsic physics described in QCD is exhibited
on a full scale. An openly admitted purpose
in the next sections is to give a unified theory for the 3 forces:
1.) electromagnetic- 2.) weak-nuclear- 3.) strong-nuclear-forces. 
However, some concepts such as Dirac's spin 
operator will be introduced in a 
subsequent paper into this theory.
    
\subsection{Z-crystals modelling Ginsburg-Landau-Zeeman operators.}

The Z-torus bundles are defined by a factorization,  
$\Gamma\backslash H$, of the nilpotent group $H$ by a Z-lattice,
$\Gamma=\{Z_\gamma\}$, which is defined  
only on $\mathcal Z$ and not on the whole $(X,Z)$-space. 
Such a factorization defines a Z-torus bundle over the X-space.
The natural Z-Fourier decomposition,
$L^2_{\mathbb C}:=\sum_\gamma W_\gamma$, of the $L^2$ function space 
belonging to this bundle is defined such that subspace
$W_\gamma$ is spanned by functions of the form
\begin{equation}
\label{diskFour}
\Psi_\gamma (X,Z)=\psi (X)e^{2\pi\mathbf i\langle Z_\gamma ,Z\rangle}.
\end{equation}
Each $W_\gamma$ is invariant under the action of $\Delta$, more precisely
we have:
\begin{eqnarray}
\Delta \Psi_{\gamma }(X,Z)=(\lhd_{\gamma}\psi )(X)
e^{2\pi\mathbf i\langle Z_\gamma ,Z\rangle},\quad
{\rm where}
\\
\lhd_\gamma
=\Delta_X + 2\pi\mathbf i D_{\gamma }\bullet 
-4\pi^2\mathbf z_\gamma ^2(1 + \frac 1 {4} \mathbf x^2).
\end{eqnarray}
In terms of parameter $\mu =\pi \mathbf z_\gamma $, 
this operator is written
in the form
$
\lhd_{\mu}=
\Delta_X +2 \mathbf i D_{\mu }\bullet -\mu^2\mathbf x^2-4\mu^2.
$
Although it is defined in terms of the X-variable,
this operator is not a sub-Laplacian resulted by a submersion. 
It rather is the restriction of the total Laplacian onto 
the invariant subspace $W_\gamma$.
Actually, the Z-space is represented by the constant $\mu$
and operator $D_\mu\bullet$. A characteristic feature of this 
restricted operator is that it involves only a single endomorphism,
$J_{Z_\gamma}$. 

In the 2D-case, such an operator can be transformed 
to the the Ginsburg-Landau-Zeeman
operator (\ref{land}) by choosing $\mu ={eB/2\hbar c}$ and  
multiplying the whole operator with 
$-{\hbar^2/2m}$.
In general dimensions,
number $\kappa =k/2$ means the {\it number of particles}, and, 
endomorphisms $j_Z$ and $-j_Z$ in the above formulas are attached to 
systems electrons resp. positrons. More precisely, by the classification 
of H-type groups, these endomorphisms are acting on the irreducible
subspaces $\mathbb R^{n_l}$ and the system is interpreted such that 
there are $n_l/2$ particles of the same charge orbiting on complex planes
determined by the complex structures $j_{Z_u}$ resp. $-j_{Z_u}$
in constant magnetic 
fields whose directions are perpendicular to the complex planes
where the particles are orbiting. 
The actuality of the
complex planes where the orbiting takes place can be determined just 
probabilistically by the probability amplitudes defined for such systems.
The total number of particles is $\kappa =(a+b)n_l/2$. The probability
amplitudes must refer to $\kappa$ number of particles, that is, they
are defined on the complex X-space $\mathbb C^\kappa$ defined by the
complex structure $J_{Z_u}$. This theory
can be established just after developing an adequate spectral theory.

Above, adjective "perpendicular" is meant to be just symbolic, 
for axis $Z$ is actually separated
from the orbiting. That is, it is not the actual axis of orbiting,
but it is a vector in the Z-space which is attributed to the orbiting
by the linear map $\mathbb J:Z\to J_Z$.  
Thus, it would be more appropriate
to say that this constant perpendicular 
magnetic field, $B$, is just "felt" 
by the particle orbiting on a 
given complex plane. Anyhow, the $B$ defines
a unique inertia system on the complex 
plane in which $E=0$, that is, the
associated electric field vanishes. In all of the other inertia system
also a non-zero $E$ must be associated with $B$. The relativistic time
$T$ defined on this unique inertia system is the inner time defined 
for the particle orbiting on a complex plane. 
This time can be synchronized, meaning that
common time $T$ can be introduced which defines time on 
each complex plane.     

Note that this operator contains
also an extra constant term, $4\mu^2$, which is explained later
as the total energy of neutrinos accompanying 
the electron-positron system. 
This energy term is neglected in the original
Ginsburg-Landau-Zeeman Hamiltonian. 
Thus the higher dimensional mathematical model 
really represents a system of particles and antiparticles
which are orbiting in constant
magnetic fields. Operator, $D_\mu\bullet$, associated with magnetic
dipole resp. angular momentum operators, 
are defined for the lattice points
separately. Therefore this model can be viewed such that it is 
associated with 
{\it magnetic-dipole-moment-crystals}, or, 
{\it angular-moment-crystals}. In short, they are called Z-crystals. 
They are particularly interesting 
on a group $H^{(a,b)}_3$ where the Z-space is $\mathbb R^3$. On this 
Euclidean space all
possible crystals are well known by classifications. 
It would be interesting to know what does this mathematical
classification mean from physical point of view and if these Z-crystals
really exist in nature?

\subsection{Explicit spectra of Ginsburg-Landau-Zeeman operators.}
 
On Z-crystals, the spectral 
investigation of the total operator (\ref{Delta}) can be
reduced to the operators $\lhd_\gamma$, induced by $\Delta$ 
on the invariant
subspaces $W_\gamma$.   
On Heisenberg-type groups this operator involves only a single parameter 
$\mu >0$, where $\mu^2$ is the single eigenvalue of 
$-J_\gamma^2$. By this reason, it is denoted by $\lhd_\mu$. 

This problem
is traced back to an ordinary differential operator 
acting on radial functions, which can be found by seeking 
the eigenfunctions in the form
$F(X)=f(\langle X,X\rangle )\mathtt H^{(n,m )}
(X)$, 
where $f$ is an even function 
defined on $\mathbb R$ and
$\mathtt H^{(n,m)}(X)$ is a complex valued
homogeneous harmonic polynomial of order
$n$, and, simultaneously, it is also an eigenfunction 
of operator
$\mathbf iD_\mu\bullet$
with eigenvalue $m\mu$. 
Such polynomials can be constructed as follows.

Consider a complex orthonormal basis, 
$\mathbf B=\{B_1,\dots ,B_{\kappa}\}$,
on the complex space defined by the complex structure 
$J=(1/\mu)J_\mu$. The corresponding
complex coordinate system is denoted by
$\{z_1,\dots ,z_{\kappa}\}$.
Functions
$
P=z^{p_1}_1\dots z^{p_{\kappa}}_{\kappa}
\overline z^{q_1}_1\dots \overline z^{q_{\kappa}}_{\kappa}
$
satisfying
$p_1+\dots +p_{\kappa}=p,\, 
q_1+\dots +q_{\kappa}=n-p$
are $n^{th}$-order homogeneous polynomials which are eigenfunctions
of $\mathbf iD\bullet$ with eigenvalue $m =2p-n$.
However, these polynomials are not harmonic. 
In order to get the harmonic
eigenfunctions, they must be exchanged for the 
polynomials
$
\Pi^{(n)}_X (P)
$,
defined by projections, $\Pi^{(n)}_X$, onto the space of 
$n^{th}$-order 
homogeneous harmonic 
polynomials of the X-variable. By their explicit description (\ref{proj}),
these projections are of the form 
$\Pi^{(n)}_X =\Delta_X^0+B^{(n)}_1\mathbf x^2\Delta_X +
B^{(n)}_2\mathbf x^4\Delta_X^2+\dots$,
where $\Delta_X^0=id$. By this formula,
also the harmonic polynomial obtained by this 
projection is an eigenfunction of $\mathbf iD\bullet$ 
with the same eigenvalue $m\mu$. 

When operator $\lhd_\mu$ is acting on 
$F(X)=f(\langle X,X\rangle )\mathtt H^{(n,m)}(X)$, 
it defines an ordinary differential operator acting on $f$. Indeed,
by $D_{\mu }\bullet f=0$, we have: 

\begin{eqnarray}
(\lhd_{\mu }F)(X)=\big(4\langle X,X\rangle f^{\prime\prime}
(\langle X,X\rangle )
+(2k+4n)f^\prime (\langle X,X\rangle )\\
-(2m\mu +4\mu^2((1 +{1\over 4}\langle X,X\rangle )
f(\langle X,X\rangle ))\big)\mathtt H^{(n,m)} (X).
\nonumber
\end{eqnarray}
The eigenvalue problem can, therefore, be reduced to an ordinary
differential operator. More precisely, we get:
\begin{theorem}
On a Z-crystal, $B_R\times T^l$, under a given
boundary condition $Af^\prime (R^2)+Bf(R^2)=0$ defined by constants 
$A,B\in\mathbb R$, 
the eigenfunctions
of $\lhd_{\mu }$ can be represented
in the form $f(\langle X,X\rangle )\mathtt H^{(n,m)} (X)$, where the radial
function $f$ is an eigenfunction of the ordinary differential operator
\begin{equation}
\label{Lf_lambda}
(\large{\Diamond}_{\mu,\tilde t}f)(\tilde t)=
4\tilde tf^{\prime\prime}(\tilde t)
+(2k+4n)f^\prime (\tilde t)
-(2m\mu +4\mu^2(1 +
{1\over 4}\tilde t))f(\tilde t).
\end{equation}
For fixed degrees $n$ and $m$, the multiplicity 
of such an eigenvalue is the
dimension of space formed by the spherical harmonics 
$\mathtt H^{(n,m)} (X)$.

In the non-compact case,
when the torus bundle is considered over the whole X-space, 
the $L^2$-spectrum can explicitly be computed. 
Then, the above functions are sought
in the form 
$f_\mu (\tilde t)=u(\tilde t\mu)e^{-\tilde t\mu /2}$ 
where $u(\tilde t)$ is a uniquely determined $r^{th}$-order polynomial 
computed for $\mu =1$.
In terms of these parameters, the elements of the spectrum are
$\nu_{(\mu,r,n,m)}=-((4r+4p+k)\mu+4\mu^2)$. This
spectrum depends just on $p=(m+n)/2$ and the same 
spectral element appears
for distinct degrees $n$. Therefore, the multiplicity of each
eigenvalue is infinity.
\end{theorem}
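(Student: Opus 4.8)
The plan is to prove both assertions by separation of variables on the invariant subspace $W_\gamma$, exploiting that $\lhd_\mu$ commutes with $\mathbf iD_\mu\bullet$ and with the unitary group $U(\kappa)$ preserving the complex structure $J=(1/\mu)J_\mu$ on $\mathcal X\cong\mathbb C^{\kappa}$ (the former because $D_\mu\bullet$ generates the complex-scalar rotation, the latter because $\Delta_X$, $\mathbf x^2$ and $D_\mu\bullet$ are all $U(\kappa)$-invariant). First I would decompose the $L^2$-space of functions of the $X$-variable — on $B_R$ in the compact case, on $\mathcal X=\mathbb R^{k}$ in the non-compact one, the torus factor contributing only a multiplicative constant — as the Hilbert-space orthogonal sum
\[
\bigoplus_{n\ge 0}\ \bigoplus_{|m|\le n,\ m\equiv n\,(2)}\ \mathcal R_n\otimes\mathcal H^{(n,m)},
\]
where $\mathcal H^{(n,m)}$ is the finite-dimensional space of degree-$n$ harmonic polynomials that are eigenfunctions of $\mathbf iD_\mu\bullet$ with eigenvalue $m\mu$ (these exist: they are obtained from the monomials $P$ by the projections $\Pi^{(n)}_X$ recalled above, and $\mathcal H^{(n,m)}\neq 0$ exactly when $|m|\le n$ and $m\equiv n\,(2)$), and $\mathcal R_n$ is the space of radial functions $f(\langle X,X\rangle)$ square integrable against the weight $\tilde t^{\,n+k/2-1}\,d\tilde t$ which the metric volume form produces after passing to polar coordinates on $\mathbb R^{k}$ (recall $N$ is unimodular, so this volume form is Lebesgue). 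This is the classical spherical-harmonic expansion refined by the $\mathbf iD_\mu\bullet$-grading; its completeness is what makes the eigenfunction list exhaustive. Since $\lhd_\mu$ preserves each summand and, by the computation displayed just before the statement, acts there as $\Diamond_{\mu,\tilde t}\otimes\mathrm{id}$, every eigenfunction indeed takes the form $f(\langle X,X\rangle)\mathtt H^{(n,m)}(X)$.

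For the compact assertion, fix $(n,m)$ and regard $\Diamond_{\mu,\tilde t}f=\nu f$ as a regular-singular Sturm--Liouville problem on $[0,R^2]$: the indicial exponents at $\tilde t=0$ are $0$ and $1-n-k/2$, so smoothness of $F=f(\mathbf x^2)\mathtt H^{(n,m)}$ at the origin — automatic for genuine eigenfunctions by interior elliptic regularity — selects the one-dimensional space of solutions regular at $0$, and the Robin condition $Af^\prime(R^2)+Bf(R^2)=0$ then cuts out a discrete sequence of eigenvalues $\nu$, each with a one-dimensional radial eigenspace. Tensoring back with $\mathcal H^{(n,m)}$ shows that, within the $(n,m)$-sector, the multiplicity of such a $\nu$ equals $\dim\mathcal H^{(n,m)}$, the dimension of the space of spherical harmonics $\mathtt H^{(n,m)}(X)$; running over all $(n,m)$ and invoking completeness of the decomposition then recovers the full spectrum.

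For the non-compact assertion I would integrate $\Diamond_{\mu,\tilde t}f=\nu f$ on $[0,\infty)$ explicitly. Rescaling $s=\mu\tilde t$ and substituting $f=u(s)e^{-s/2}$ converts the equation into the generalized Laguerre equation for $u$, with parameter $\alpha=n+k/2-1$; of its two independent solutions one behaves like $e^{+s/2}$ times a power at infinity and is ruled out by the $L^2$-weight, while the remaining (Kummer) solution — entire, hence regular at $0$ — still grows like $e^{+s/2}$ times a power unless it terminates, i.e.\ unless $u$ is a polynomial of some degree $r\ge 0$. The termination condition is precisely the quantization yielding
\[
\nu_{(\mu,r,n,m)}=-\big((4r+k+2(n+m))\mu+4\mu^2\big)=-\big((4r+4p+k)\mu+4\mu^2\big),\qquad p=\tfrac{m+n}{2},
\]
with $u$ the corresponding Laguerre polynomial, unique up to scale (its $\mu=1$ form being what the statement records). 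Finally, since this value depends on $(n,m)$ only through $p$, fixing $r$ and $p$ and letting $n$ run over all integers $\ge p$ (so $m=2p-n$ and $\mathcal H^{(n,m)}\neq 0$) produces infinitely many mutually orthogonal eigenspaces for one and the same eigenvalue; hence every eigenvalue has infinite multiplicity.

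The step that will require the most care — the main obstacle — is the dialogue between operator theory and ODE theory: one must check that an eigenfunction of $\Delta$ on the crystal lying in $W_\gamma$ is, summand by summand, forced onto the solution regular at $\tilde t=0$ and, in the non-compact case, onto the one decaying at infinity, so that the Sturm--Liouville and Laguerre dichotomies genuinely determine the spectrum with no extraneous eigenfunctions. This rests on self-adjointness of the relevant Robin (resp.\ $L^2$) problem together with interior elliptic regularity, and on the routine identification of the measure induced on each radial summand with $\tilde t^{\,n+k/2-1}\,d\tilde t$.
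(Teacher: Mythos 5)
Your proposal is correct and follows essentially the same route as the paper: reduce to the radial operator on each $\mathcal H^{(n,m)}$-isotype, rescale $s=\mu\tilde t$, substitute $f=u(s)e^{-s/2}$, recognize the generalized Laguerre equation with parameter $\alpha=n+k/2-1$, and read off the eigenvalue $\nu_{(\mu,r,n,m)}=-((4r+4p+k)\mu+4\mu^2)$ from the termination condition; infinite multiplicity then comes from varying $n$ at fixed $r$ and $p$. The paper's own proof opens with ``Only the last statement is to be established,'' proceeds first with $\mu=1$, exhibits the polynomial eigenfunctions by a recursion on coefficients, identifies them as Laguerre polynomials via the operator identity $P_{(\mu=1,n,m)}=4\Lambda_{(k/2+n-1)}-(4p+k+4)$, and then scales to general $\mu$ — the same Laguerre mechanism you use.

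Where you genuinely add value is in filling two gaps the paper leaves tacit: (i) you spell out the orthogonal decomposition $\bigoplus_{n,m}\mathcal R_n\otimes\mathcal H^{(n,m)}$ that makes the eigenfunction list exhaustive and gives the multiplicity claim in the compact Robin case (the paper simply does not prove the first paragraph of the theorem), and (ii) in the non-compact case you argue — via the indicial exponents at $0$, the growth dichotomy of Kummer's two solutions, and the $L^2$-weight $\tilde t^{\,n+k/2-1}\,d\tilde t$ — that the polynomial solutions are the \emph{only} $L^2$-eigenfunctions, whereas the paper only shows that polynomial eigenfunctions exist for each $r$ and does not rule out extraneous spectrum. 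One small inaccuracy you should temper: the claim that $\mathcal H^{(n,m)}\neq 0$ exactly when $|m|\le n$ and $m\equiv n\ (\mathrm{mod}\ 2)$ fails for $\kappa=1$ (where only $m=\pm n$ survive after harmonic projection), though this does not damage the infinite-multiplicity argument since for fixed $p$ the sectors $\mathcal H^{(n,2p-n)}$ with $n\ge p$ are still nonzero whenever they are needed.
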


\begin{proof}
Only the last statement is to be established. We proceed, first, with the
assumption $\mu =1$.
Then, function 
$e^{-{1\over 2}\tilde t}$ 
is an eigenfunction of this radial
operator with eigenvalue $-(4p+k+4)$. The general
eigenfunctions are sought in the form
\begin{equation}
f(\tilde t)=u(\tilde t)e^{-{1\over 2}\tilde t}.
\end{equation}
Such a function is an eigenfunction of 
$\Diamond$
if and only if $u(\tilde t)$ is an eigenfunction of the differential 
operator
\begin{equation}
\label{Pop}
(P_{(\mu =1,n,m )}u)(\tilde t)=
4\tilde tu^{\prime\prime}(\tilde t)
+(2k+4n-4\tilde t)u^\prime (\tilde t)
-(4p+k+4)u(\tilde t).
\end{equation}
Because of differentiability conditions, 
we impose $u^\prime (0)=0$ on the
eigenfunctions. Since in this case, $u(0)\not =0$ hold for any non-zero
eigenfunction, also the condition $u(0)=1$ is imposed.

In the compact case, corresponding to the ball$\times$torus-type manifolds
defined over a ball $B_R$, the spectrum of this Laguerre-type differential
operator can not be explicitly computed. For a given boundary condition  
(which can be Dirichlet, $u (R)=0$, or Neumann, 
$u^\prime (R)=0$) the spectrum
consists of a real sequence $0\leq \mu_1>\mu_2>\dots \to -\infty$.
The multiplicity of each of these Laguerre-eigenvalues is $1$ and the 
multiplicity corresponding to the Ginsburg-Landau-Zeeman operator 
is the dimension
of the space of spherical harmonics $\mathtt H^{(n,m)}$. 
The elements of the Laguerre-spectrum are zeros
of a holomorphic function expressed by an integral formula \cite{coh}.

Contrary to the compact case, the spectrum 
can be explicitly computed for the non-compact torus bundle
$\Gamma\backslash H$, defined over the whole X-space. 
An elementary argument shows that for any $r\in \mathbb N$, operator 
(\ref{Pop}) has a
uniquely determined polynomial eigenfunction
\begin{equation}
\label{lag}
u_{(\mu =1,r,n,m )}(\tilde t)
=\tilde t^r+a_1\tilde t^{r-1}+a_2\tilde t^{r-2}+\dots +
a_{r-1}\tilde t+a_r
\end{equation}
with coefficients satisfying the recursion formulas
\begin{equation}
a_0=1\quad ,\quad a_i=-a_{i-1}(r-i)(r+n+{1\over 2}k+1-i)r^{-1}.
\end{equation}
 
Actually, this argument can be avoided and these polynomials can 
explicitly be established by observing
that they are  nothing but 
the Laguerre polynomials defined as the $r^{th}$-order polynomial
eigenfunctions
of operator
\begin{equation}
\label{Lop}
\Lambda_{\alpha} (u)(\tilde t)=\tilde tu^{\prime\prime}+
(\alpha +1-\tilde t)u^\prime ,
\end{equation}
with eigenvalues $-r$.
This statement follows from identity
\begin{equation}
P_{(\mu =1,n,m )}=4\Lambda_{({1\over 2}k+
n-1)}-(4p+k+4),
\end{equation}
implying that
the eigenfunctions of operators 
(\ref{Pop}) and (\ref{Lop}) 
are the same and 
the eigenvalue corresponding to (\ref{lag}) is
\begin{equation}
\label{leigv}
\nu_{(\mu =1,r,n,m )}=-(4r+4p+k+4)\quad ,
\quad p={1\over 2}(m +n).
\end{equation}
We also get that, for fixed values of $k,n,m$ 
(which fix the value also for $p$), functions 
$
u_{(\mu =1,r,n,m )}\, ,\, n=0,1,\dots \infty
$ form a basis in $L^2([0,\infty ))$.

In case of a single $\mu$, 
the eigenfunctions are sought in the form
\begin{equation}
\label{eigfunc}
u_{\mu r nm}(\langle X,X\rangle )e^{-{1\over 2}\mu 
\langle X,X\rangle}\mathtt H^{(n,m)}(X).
\end{equation}
It turns out that
\begin{equation}
u_{(\mu ,r,n,m)}(\tilde t)=
u_{(\mu =1 ,r,n,m)}(\mu \tilde t)
\end{equation}
and the corresponding eigenvalue is
\begin{equation}
\label{eigval}
\nu_{(\mu,r,n,m)}=-((4r+4p+k)\mu+4\mu^2).
\end{equation}

This statement can be explained as follows. For a general $\mu$, 
the action of (\ref{Lf_lambda}) on a function 
$f(\tilde t)=u(\mu \tilde t)e^{-{1\over 2}\mu \tilde t}$ can be described
in terms of $\tau =\mu \tilde t$ as follows:
\begin{equation}
\label{Lf_lambda}
(L_{(\mu ,n,m )}f=\mu (4\tau f_{\tau\tau}+(2k+4n)f_{\tau}
-(2m+\tau )f)-4\mu^2f,
\end{equation}
from which the statement follows.
\end{proof}

This technique extends to general 2-step nilpotent Lie groups, where
the endomorphisms may have distinct eigenvalues, 
$\{\mu_i\}$. In this case 
the eigenfunctions are represented
as products of functions of the form
\begin{equation}
F_{(i)}(X)=f_{(i)}(\langle X,X\rangle)
\mathtt H^{(n_i,m_i )}(X),
\end{equation}
where the functions in the formula are defined on the maximal
eigensubspace corresponding to the parameter $\mu_i$. When the 
spectrum is computed on the whole X-space, this method works out for the
most general Ginsburg-Landau-Zeeman operators. In case of a single $\mu$,
this method applies also to computing the spectra on torus 
bundles over balls and spheres.
In case of multiple $\mu$'s, it applies to torus bundles over 
the Cartesian product
of balls resp. spheres defined on the above $X_i$-spaces.

\section{Particles having interior.}

In order to sketch up a clear map for this rather complex section, 
we start with a review of the main mathematical and physical ideas 
this exposition is based on. These ideas are rigorously established 
in the subsequent subsections.
\subsection{A preliminary review of the main ideas.}
Systems of particles having insides can be attached to 
ball$\times$ball- and ball$\times$\-sphe\-re-type 
manifolds. Originally they emerged in the second type of spectral 
investigations performed in \cite{sz2}-\cite{sz4}. These manifolds 
are defined by appropriate smooth fields of Z-balls resp. Z-spheres 
of radius $R_Z(\mathbf x)$ 
over the points of a fixed X-ball $B_X$ whose radius is denoted 
by $R_X$. Note that radius $R_Z(\mathbf x)$ depends just on the
length, $\mathbf x:=|X|$, of vector $X\in B_X$ over which the 
Z-balls resp. Z-spheres are considered. The centers all
of the balls resp. spheres which show up in this definition are 
always at the origin of the corresponding spaces. The boundaries
of these manifolds are the so called
sphere$\times$ball- resp. sphere$\times$sphere-type manifolds, 
which are trivial Z-ball-
resp. Z-sphere-bundles defined
over fixed X-spheres of radius $R_X$.
In short, one considers Z-balls resp. Z-spheres instead of the 
Z-tori used in the previous constructions of Z-crystals. In the
isospectrality investigations these compact domains
corresponding to $R_X <\infty$ play the 
primary interest.
In physics, however, the non-compact bundles corresponding to 
$R_X =\infty$ (that is, which are defined over the whole X-space)
become the most important cases. In what follows, both the 
compact and non-compact cases will be investigated.

Contrary to the Z-crystal models, 
the computations in this case can not be reduced to a single 
endomorphism. Instead, they always have to be established for the complete
operator $\mathbf M=\sum\partial_\alpha D_\alpha\bullet$ which includes
the angular momentum endomorphisms $J_Z$ with respect to any Z-directions.
This operator strongly relates both to the
3D angular momentum 
$\mathbf P=(P_1,P_2,P_3)=\frac{1}{\hbar}Z\times\mathbf p$, 
defined in (\ref{3Dang_mom}), and the strong interaction 
term $\mathbf igx^\mu A^\alpha_\mu t_\alpha$ 
of the QCD Lagrangian (\ref{QCDLagrange}). Actually, it has a rather
apparent formal identity with Pauli's intrinsic spin Hamiltonian
$\sum B_iP_i$ defined by magnetic fields $\mathbf B=(B_1,B_2,B_3)$
(cf. \cite{p}, Volume 5, pages 152-159).
In $SU(3)$-theory the term corresponding to the 3D angular momentum
is exactly the above mentioned strong interaction term.
Due to the new form 
(\ref{newDiracwave}) of the de Broglie waves,
where the angular-momentum-axes are separated from the planes on which 
the particles are orbiting, also the angular momentum operator 
has to appear in a new form. 

In order to make the argument about the analogy with Pauli's
intrinsic spin resp. strong interaction term more clear, note 
that the Lie algebras determined by the 3D angular momenta 
resp. Gell-Mann's matrices $t_\alpha$ are $su(2)$ resp. $su(3)$. Thus, the
nilpotent-group-models corresponding to these classical Yang-Mills
models are $H^{(a,b)}_3$ resp. the
nilpotent group constructed by the representations of $su(3)$. 
When operator $\mathbf M$ acts on wave function
(\ref{newDiracwave}), it appears behind the integral in the form 
$\mathbf iD_K\bullet$. Consider, first, group $H^{(a,b)}_3$ and suppose
that $K=e_1$, where $\{e_1,e_2,e_3\}$ is the natural basis on 
$\mathbb R^3$. Then, on the $\{e_2,e_3\}$-plane, which is a complex plane
regarding the complex structure $J_{e_1}$,
operator $\mathbf iD_{e_1}\bullet$ is nothing but the first component,
$-P_1=\mathbf i\big(Z_2\frac{\partial}{\partial Z_3}
-Z_3\frac{\partial}{\partial Z_2}\big)$, 
of Pauli's angular momentum operator. That is, the analogy
between $\mathbf M$ and the classical angular momentum operator 
becomes apparent after letting $\mathbf M$ 
act on wave functions (\ref{newDiracwave}). This action is the very
same how $P$ is acting on the original de Broglie waves. Thus this 
new form 
of action, which can be described by axis-separation and 
placing the orbiting particles onto the complex planes, can really 
be considered as adjustment to the 
new forms of the wave functions. These arguments
work out also for groups constructed by $su(3)$-representations. Thus
it is really justified to consider $\mathbf M$ as a spin operator 
appearing in a new
situation. The greatest advantage of this new form is 
that it describes also the strong nuclear forces.

This complication gives rise to a
much more complex mathematical and physical situation 
where both the 
exterior and the interior life of particle systems exhibit themself
on a full scale. First, let the physical role of the Fourier 
transforms appearing in the formulas be clarified. If term involving time
is omitted from the formula of wave functions, the rest is called
timeless probability amplitude. Both these amplitudes and wave functions
could have been defined also by means of the inverse function
$e^{-\mathbf i\langle Z,K\rangle}$. Wave functions (or amplitudes) 
obtained from
the very same function by using $e^{\mathbf i\langle Z,K\rangle}$ resp. 
$e^{-\mathbf i\langle Z,K\rangle}$ in their 
Fourier transforms are said to be wave functions (or amplitudes)
defined for particle- resp. antiparticle-systems. In other words,
the definition of particles and antiparticles is possible because
of these two choices. Calling one object particle and its
counter part antiparticle is very similar to naming one of the poles 
of a magnet north-pole and the other one south pole. 
Since the Laplace operators on 2-step nilpotent Lie groups are defined 
by means of constant magnetic
fields, this is actually the right physical explanation for choosing  
$e^{\mathbf i\langle Z,K\rangle}$ or 
$e^{-\mathbf i\langle Z,K\rangle}$ to introduce probability amplitudes. 
By this reason, $\mathbf M$ is called unpolarized magnetic dipole moment
or angular momentum operator. The polarized operators appear behind 
the integral sign of the Fourier integral formula when $\mathbf M$ is 
acting on the formula. 

The complexity of this operator is fascinating. For instance, it  
is the sum of extrinsic, $\mathbf{L}$, and intrinsic, $\mathbf{S}$, 
operators which do not commute with each other. Furthermore, operator
$\OE =\Delta_X+(1+\frac 1{4}\mathbf x^2)\Delta_Z+\mathbf{L}$
is a Ginsburg-Landau-Zeeman operator which exhibits just orbital spin. The
intrinsic life of particles is encoded into $\mathbf{S}$. 
Also the strong nuclear forces, keeping the 
particles having interior together, can be explained by this operator. 
In order to make this 
complicated situation as clear as possible, we describe, in advance, 
how certain 
eigenfunctions of $\Delta$ can explicitly be computed. These 
computations provide a great opportunity also for a preliminary review
of the general eigenfunction computations, which will be connected
to these particular computations as follows.

Since these particular functions
do not satisfy the boundary conditions, they do not
provide the final solutions for finding the eigenfunctions
yielding also given boundary conditions.  This conditions can be imposed
just after certain projections performed on the center (that is, in
the insides of the particles). But then, these projected functions 
will not be eigenfunctions of the complete $\Delta$ any more. 
They are eigenfunctions
just of the exterior operator $\OE$. By this reason,  
they are called weak force eigenfunctions. The strong force 
eigenfunctions, defined by the eigenfunctions of the complete $\Delta$
satisfying also a given boundary condition, can be expressed just
by complicated combinations of the weak force eigenfunctions, 
meaning that the strong forces are piled up by weak forces. Since the
weak force eigenfunctions are Ginsburg-Landau-Zeeman eigenfunctions, by the
combinations of which the strong force eigenfunctions can be expressed,
this theory really unifies the electromagnetic, the weak, and the strong
nuclear forces. Let it be mentioned yet that the only force-category
missing from this list is the gravitational force. At this early
point of the development, we do not comment the question if this 
unification can be extended also to this force.

Now we turn back to establish an explicit formula describing 
certain eigenfunctions of $\Delta$ on general 
Heisenberg type Lie groups $H^{(a,b)}_l$. More details about the
general eigenfunction computations will also be provided. 
Although this construction can be
implemented also on general 2-step nilpotent Lie groups,
this more complicated case is omitted in
this paper. 
First, the eigenfunctions of a single 
angular momentum operator $\mathbf D_K\bullet$, defined for a
Z-vector $K$ are described as follows.  For a fixed X-vector $Q$
and unit Z-vector $K_u={1\over \mathbf k}K_u$, consider the X-function
$\Theta_{Q}(X,K_u)=\langle Q+\mathbf iJ_{K_u}(Q),X\rangle$ and its 
conjugate $\overline{\Theta}_{Q}(X,K_u)$. For a vector $K=\mathbf kK_u$
of length $\mathbf k$,
these functions are 
eigenfunctions of $D_{K}\bullet$ with eigenvalue 
$-\mathbf k\mathbf i$ resp.
$\mathbf k\mathbf i$. The higher order eigenfunctions are of the form
$\Theta_{Q}^p\overline\Theta^q_{Q}$ 
with eigenvalue $(q-p)\mathbf k\mathbf i$.
 
In order to find the eigenfunctions of the compound operator $\mathbf M_Z$,
consider a Z-sphere bundle $S_{R_Z}(\mathbf x)$ over the X-space 
whose radius function $R_Z(\mathbf x)$ depends just on 
$|X|=\mathbf x$. For an appropriate function
$\phi (\mathbf x,K)$ 
(depending on $\mathbf x$ and $K\in S_{R_Z}$, furthermore, which  
makes the following integral formula well defined) consider 
\begin{equation}
\label{fourR_Z}
\mathcal F_{QpqR_Z}(\phi )(X,Z)
=\oint_{S_{R_Z}}e^{\mathbf i
\langle Z,K\rangle}\phi (\mathbf x,K)
(\Theta_{Q}^p\overline\Theta^q_{Q})(X,K_u)dK_{no},
\end{equation} 
where $dK_{no}$ is the normalized measure on $S_{R_Z}(\mathbf x)$.
By $\mathbf M_Z\oint =\oint \mathbf iD_K\bullet$, 
this function restricted to the Z-space over an arbitrarily fixed
X-vector is an eigenfunction
of $\mathbf M_Z$ with the real eigenvalue $(p-q)R_Z(\mathbf x)$. 
These functions are eigenfunctions also of $\Delta_Z$ with eigenvalue
$R_Z^2(\mathbf x)$. Also note 
that these eigenvalues do not change by varying $Q$.

The function space spanned by functions (\ref{fourR_Z}) which are 
defined by all possible $\phi$'s is not invariant with respect to 
the action of $\Delta_X$, thus the eigenfunctions of the complete
operator $\Delta$ do not appear in this form. In order to find the
common eigenfunctions,
the homogeneous but non-harmonic polynomials 
$
\Theta_{Q}^p\overline\Theta^q_{Q}
$
of the X-variable should be exchanged for the 
polynomials
$
\Pi^{(n)}_X (\Theta_{Q}^p\overline\Theta^q_{Q})
$,
defined by projections, $\Pi_X$, onto the space of $n=(p+q)$-order 
homogeneous harmonic 
polynomials of the X-variable. Formula 
$\Pi_X =\Delta_X^0+B_1|X|^2\Delta_X +B_2\mathbf x^4\Delta_X^2+\dots$,
established in (\ref{proj}), implies that, over each X-vector, also  
\begin{equation}
\label{HfourR_Z}
\mathcal {HF}_{QpqR_Z}(\phi )(X,Z)
=\oint_{S_{R_Z}}e^{\mathbf i
\langle Z,K\rangle}\phi (\mathbf x,K)
\Pi^{(n)}_X(\Theta_{Q}^p\overline\Theta^q_{Q})(X,K_u))dK_{no}
\end{equation}
are eigenfunctions
of $\mathbf M$ and $\Delta_Z$ with the same eigenvalues 
what are defined for (\ref{fourR_Z}). 

The action of the complete Laplacian is a combination of
X-radial differentiation, $\partial_{\mathbf x}$, 
and multiplications with functions depending 
just on $\mathbf x$. Due to the normalized measure $dK_{no}$, these
operations can be considered such that they directly act 
inside of the integral sign on function 
$\phi (\mathbf x,K)$ in terms of the $\mathbf x$-variable, only. 
That is, the action is completely reduced to X-radial
functions and the eigenfunctions of $\Delta$ can be found in the form
\begin{eqnarray}
f(\mathbf x^2)\oint_{S_{R_Z}}e^{\mathbf i
\langle Z,K\rangle}
F_{Qpq}(X,K_u))dK_{no},
\quad
{\rm where}
\\
F_{Qpq}(X,K_u))=\varphi (K)\Pi^{(n)}_X
(\Theta_{Q}^p(X,K_u)\overline\Theta^q_{Q}(X,K_u)).
\end{eqnarray}
The same computations developed for the Z-crystals
yield that this reduced operator appears in the following form:
\begin{equation}
\label{Lf_mu(x)}
({\Diamond}_{\mu (\tilde t),\tilde t}f)(\tilde t)=
4\tilde tf^{\prime\prime}(\tilde t)
+(2k+4n)f^\prime (\tilde t)
-(2m\mu (\tilde t)  +4\mu^2(\tilde t)(1 +
{1\over 4}\tilde t))f(\tilde t),
\end{equation}
where $\tilde t=\mathbf x^2$, and 
$\mu (\tilde t)=R_Z(\sqrt{\tilde t})=R_Z(\mathbf x)$. Note that
this is exactly the same operator what was obtained for 
the radial Ginsburg-Landau-Zeeman operator on
Z-crystals. Since function 
$\mu (\tilde t)$ may depend also on  $\tilde t=\mathbf x^2$,
it actually appears in a more general form here.
However, for constant radius functions $R_Z$, 
it becomes the very same operator, indeed. That is, also this
eigenfunction-problem is reduced to finding the eigenfunctions of 
the Ginsburg-Landau-Zeeman operator reduced to X-radial functions. 
This reduced
operator remains the same by varying $Q$, thus also the spectrum
on the invariant spaces considered for fixed $Q$'s is not changing
regarding these variations. This phenomena reveals the later discussed 
spectral isotropy yielded on these models.

Note that this construction is carried out by a fixed X-vector $Q$,
but it extends to general polynomials as follows.
Consider an orthonormal system 
$\mathbf B=\{B_{1},\dots ,B_{\kappa}\}$ of vectors
on the X-space. They form a complex, but generically
non-orthonormal basis regarding the complex structures $J_{K_u}$,
where the unit vectors $K_u$ yielding this property form
an everywhere dense open set on the unit Z-sphere. 
This set is the complement of a set of $0$ measure.
The corresponding complex coordinate systems  on the X-space 
are denoted by
$\{z_{K_u1}=\Theta_{B_{K_u1}},\dots ,z_{K_u\kappa}
=\Theta_{B_{K_u\kappa}}\}$. For given values
$p_1,q_1,\dots ,p_{\kappa},q_{\kappa}$,  
consider the polynomial
$
\prod_{i=1}^{\kappa}
z_{K_ui}^{p_i} 
\overline z_{K_ui}^{q_i}. 
$
Then functions  
\begin{eqnarray}
\label{fourprod}
\oint_{S_{R_Z}}e^{\mathbf i
\langle Z,K\rangle}f(\mathbf x^2)\varphi (K)
 \Pi^{(n)}_X\prod_{i=1}^{\kappa}
z_{K_ui}^{p_i} 
\overline z_{K_ui}^{q_i} dK_{no}=
\\
=f(\mathbf x^2)\oint_{S_{R_Z}}e^{\mathbf i
\langle Z,K\rangle}F_{\mathbf Bp_iq_i}(X,K_u) dK_{no}
=  \mathcal {HF}_{\mathbf Bp_iq_iR_Z}(f )(X,Z)
\nonumber
\end{eqnarray}
are eigenfunctions of $\Delta$ if and only if function 
$f(\mathbf x^2)=f(\tilde t)$ is an eigenfunction of the
radial operator (\ref{Lf_mu(x)}), where $p=\sum p_i, q=\sum q_i, n=p+q$.
 
Consider a Z-ball bundle with radius function $R_Z(\mathbf x)$ defining
a compact or non-compact ball$\times$ball-type domain. Then the 
eigenfunctions satisfying the Dirichlet or Z-Neumann conditions on this
domain can not be sought among the above eigenfunctions because
functions $F_{Qpq}(X,K_u)$ resp. $F_{\mathbf Bp_iq_i}(X,K_u)$ are not
spherical harmonics regarding the $K_u$-variable but they are
rather combinations
of several spherical harmonics belonging to different eigenvalues of 
the Z-spherical Laplacian. It turns out, however, that one can construct
the complete function space satisfying a given boundary condition by the
above formulas if functions $F_{\dots}$ are substituted by their 
projections $\Pi^{(s)}_{K_u}(F_{\dots})$ into the space of $s^{th}$-order
spherical harmonics regarding variable $K_u$.
Actually, this projection appears in a more subtle form, 
$\Pi^{(vas)}_{K_u}=\Pi^{(\alpha)}_{K_u}$,
which projects $F_{\dots}$, first, into the space of
$(v+a)^{th}$-order homogeneous polynomials 
(where $v$ resp. $a$ refer to the degrees 
of functions to which $\varphi (K)$ resp. the $(p+q)^{th}$-order 
polynomials are projected). 
The projection to the $s^{th}$-order function
space applies, then, to these homogeneous functions. The most important
mathematical tool applied in this investigations is the Hankel transform
developed later. 

Although these new functions yield the boundary conditions,
they do not remain eigenfunctions
of the complete Laplacian anymore. However, they are still 
eigenfunctions of the
partial operator $\OE$. They are called weak-force-eigenfunctions, which
can be considered as electromagnetic-force-eigenfunctions because 
both $\OE$ and the Ginsburg-Landau-Zeeman operators
can be reduced to the same radial
operator. The old ones from which the new functions are derived 
are called linkage-eigenfunctions of $\Delta$, 
which bridge the electromagnetic interactions with the weak interactions.  
       
Since the extrinsic, $\mathbf{L}$, and intrinsic, $\mathbf{S}$,
operators do not
commute, the weak-force-eigenfunctions can not be the eigenfunctions
of the complete operator $\Delta =\OE +\mathbf{S}$. In other words, the
weak-force-eigenfunctions can not be equal to the 
strong-force-eigenfunctions defined by the eigenfunctions of $\Delta$
satisfying a given boundary condition.    
To construct these functions, the Fourier integrals must be considered
on the whole Z-space $\mathbb R^l$, by seeking them in the form 
$
\int_{\mathbb R^l}e^{\mathbf i\langle Z,K\rangle}
\phi_\alpha (\mathbf x,\mathbf z)\Pi_{K}^{(\alpha)}(F_{\dots}(X,K))dK.
$
The action of $\Delta$ on these functions can be described in the form
$
\int_{\mathbb R^l}e^{\mathbf i\langle Z,K\rangle}
\bigcirc_\alpha (\phi_1,\dots ,\phi_d) \Pi_{K}^{(\alpha)}(F_{\dots}
(X,K))dK,
$
where operator $\bigcirc_\alpha (\phi_1,\dots ,\phi_d)$, corresponding
d-tuples of $(X,Z)$-radial functions to each other, is defined in terms
of Hankel transforms combined with radial derivatives of the
functions appearing in the arguments. Finding the eigenfunctions of 
$\Delta$
means finding the eigen-d-tuples, $(\phi_1,\dots ,\phi_d)$, of the
radial, so called roulette operator $\bigcirc_\alpha$. 
As it will be pointed out, these eigenfunctions exhibit properties
characteristic to strong force eigenfunctions.
 
These arguments really unify the 3 fundamental forces of particle
theory. The real union is exhibited by the common unpolarized operator
$\Delta$. After polarization, they are separated into three categories. 
These cases correspond to the function spaces on which the polarized 
operators are acting. The details are as follows.

\subsection{Twisted Z-Fourier transforms.}
 
This is the main mathematical tool which incorporates de Broglie's
wave theory into the new models in a novel, more general form. 
The name indicates that the Fourier transform is performed, 
over each X-vector,
only on the Z-space in the same manner as if one would like to consider 
the de Broglie waves 
only in the center of the Lie group. However, an important new 
feature is that this transform applies to product of functions,
where one of them purely depends just on
the center variable, $K$, while the other is a complex polynomial
of the X-variable defined in terms of the complex structures $J_{K_u}$,
where $K_u=K/\mathbf k$ and $\mathbf k=|K|$. This Z-Fourier transform 
is said to be twisted by the latter polynomials. 
Thus this transform has impact also on 
the X-variable. This simple idea establishes the necessary connection 
between the abstract axes, $K_u$, and the particles placed onto the 
complex planes of the complex structures $J_{K_u}$.
 
This transform is defined in several alternative forms corresponding
to those introduced in the previous review. The difference is
that, over each X-vector, the following functions and integrals 
are defined on the whole Z-space. This is contrary to the previous section
where the integral is defined just on Z-spheres. 
Since the eigenfunctions constructed in the
review do not satisfy any of the boundary conditions, this 
reformulation of the Z-Fourier transform is really necessary for 
the complete solutions of the considered problems.   

In the first case, consider a 
fixed X-vector $Q$, and define the same functions
\begin{equation}
\Theta_{Q}(X,K_u)=\langle Q+\mathbf iJ_{K_u}(Q),X\rangle\, ,\,
\overline{\Theta}_{Q}(X,K_u),
\end{equation}
as above.
For fixed integers $p,q\geq 0$ and $L^2$-function 
$
\phi (\mathbf x,K),
$
consider the Z-Fourier transform
\begin{equation}
\mathcal F_{Qpq}(\phi )(X,Z)=
\int_{\mathbf z} e^{\mathbf i
\langle Z,K\rangle}
\phi (\mathbf x,K)
\Theta_{Q}^p(X,K_u)\overline\Theta^q_{Q}(X,K_u)dK,
\end{equation}
which is said to be twisted by the polynomial
$
\Theta_{Q}^p(X,K_u)\overline\Theta^q_{Q}(X,K_u).
$ 
Function $\phi$ is considered also in the form 
$\phi (\mathbf x,\mathbf k)\varphi (K_u)$, where $\varphi$ is 
a homogeneous polynomial of the K-variable.
The $L^2$-space spanned by these functions is denoted by
$\mathbf \Phi^{(n)}_{Qpq}$, where $n=p+q$ indicates that these functions
are $n^{th}$-order polynomials regarding the $X$-variable. 
The space spanned by 
the twisted functions
$\phi (\mathbf x,K)\Theta_{Q}^p(X,K_u)\overline\Theta^q_{Q}(X,K_u)$
is denoted by
$\mathbf {P\Phi}_{Qpq}^n$. This is the pre-space to which the Fourier
transform is applied.

Instead of a single vector $Q$, the second alternative form is defined 
regarding $\kappa =k/2$ independent vectors,
$\mathbf B=\{E_1,\dots ,E_{\kappa}\}$, of the X-space. Such a system 
forms a complex basis for almost all complex structure $J_{K_u}$, 
where $K_u=K/\mathbf k$. Now the twisting functions are polynomials 
of the complex coordinate functions
\begin{equation}
\{z_{K_u1}(X)=\Theta_{Q_1}(X,K_u),\dots 
,z_{K_u\kappa}(X)=\Theta_{Q_{\kappa}}(X,K_u)\},
\end{equation}  
where these formulas indicate that how the coordinate functions
can be expressed in terms of the above $\Theta_Q$-functions. 
For appropriate functions $\phi (\mathbf x,K)$ and polynomial exponents
$(p_i,q_i)$ (where $i=1,\dots ,k/2$) transform
$\mathcal F_{\mathbf B(p_iq_i)}(\phi\varphi )(X,Z)$ is defined by: 
\begin{eqnarray}
\int_{\mathbb R^l} e^{\mathbf i\langle Z,K\rangle}
\phi (\mathbf x,\mathbf k)\varphi (K_u)
\prod_{i=1}^{\kappa}z^{p_i}_{K_ui}(X)\overline z^{q_i}_{K_ui}(X)dK,
\end{eqnarray}
where $\varphi (K_u)$ is the restriction of an $m^{th}$-order homogeneous
polynomial, $\varphi (K)$, onto the unit sphere of the K-space.
If $\sum_i(p_i+q_i)=n$, then the twisting functions are $n^{th}$-order
complex valued polynomials regarding the X-variable and for any fixed $X$,
the whole function is of class $L^2_K$ regarding the K-variable. These
properties are inherited also for the transformed functions.

When $\phi (\mathbf x,K)$ 
runs through all functions which are of class $L^2_K$, 
for any fixed $|X|$, the transformed functions span the function space
$\mathbf \Phi_{\mathbf B p_iq_i}^{(n)}$. 
All these function spaces, defined for index sets 
satisfying $n=\sum (p_i+q_i)$, span the function space denoted by
$\mathbf \Phi_{\mathbf B}^{(n)}=
\sum_{\{(p_iq_i)\}}\mathbf \Phi_{\mathbf Bp_iq_i}^{(n)}$.
The corresponding pre-spaces are denoted by 
$\mathbf{P\Phi}_{\mathbf B p_iq_i}^{(n)}$
and 
$\mathbf{P\Phi}_{\mathbf B}^{(n)}$
respectively.

Twisted Z-Fourier transforms (\ref{fourR_Z})-(\ref{fourprod}) 
defined in the previous section by
considering Dirac-type functions concentrated on spheres $S_{R}$ of 
radius $R(\mathbf x)$ can be generated by the familiar $L^\infty$ 
approximation of these Dirac type functions by $L^2$ functions.
That is, this radius depends just on $\mathbf x$. Function spaces
$\mathbf{\Phi}_{\mathbf B p_iq_i R}^{(n)}$,
$\mathbf{\Phi}_{\mathbf B R}^{(n)}$, and their pre-spaces
$\mathbf{P\Phi}_{\mathbf B p_iq_iR}^{(n)}$
resp.
$\mathbf{P\Phi}_{\mathbf BR}^{(n)}$ are defined similarly as for $L^2$
functions. They are defined also for one-pole functions. These cases are
denoted such that $\mathbf B$ is replaced by $Q$. 

In many cases the
theorems hold true for each version of these function spaces. By this
reason we introduce the unified denotation  
$\mathbf{\Phi}_{...R}^{(n)}$ and $\mathbf{P\Phi}_{...R}^{(n)}$, 
where the dots represent the symbols
introduced above on the indicated places. A unified denotation for
the total spaces are 
$\mathbf{\Phi}_{.R}^{(n)}$ and $\mathbf{P\Phi}_{.R}^{(n)}$,
indicating that symbols $p_iq_i$ do not show up in these formulas.
If letter $R$ is omitted, the formulas concern the previous cases
when the functions are of class $L^2$ regarding the $K$-variable.

The elements of the latter total function spaces are
complex valued functions defined on the $(X,Z)$-space such that,
upto multiplicative X-radial functions, they are
$n^{th}$-order polynomials regarding the X-variable and for any fixed $X$
they are $L^2$ functions regarding the Z-variable. It is very
important to clarify the relations between the above twisted spaces 
and the latter complex valued function spaces. By considering an 
arbitrary real basis $\mathbf Q=\{Q_1,\dots ,Q_k\}$ on the X-space,
the complex valued functions appear in the form 
$\sum_{\{a_1,\dots ,a_k\}}\phi_{a_1,\dots ,a_k}(|X|,Z)
\prod_{i=1}^k\langle Q_i,X\rangle^{a_i}$, 
where the sum is considered for all sets $\{a_1,\dots ,a_k\}$ of 
non-negative integers satisfying $\sum a_i=n$ and functions 
$\phi$ are $L^2_Z$-functions for any fixed $|X|$. The function space
spanned by these functions is called 
{\it straight $L^2_Z$ space of complex valued 
$(X,Z)$-functions}. The Z-Fourier transform
performed on such functions is called {\it straight Z-Fourier transform}. 
The explanations below show that twisted functions   
$\phi (|X|,K)
\prod_{i=1}^{k/2}z^{p_i}_{K_ui}(X)\overline z^{q_i}_{K_ui}(X)$
can be converted to straightly represented functions in the terms of which
the twisted Z-Fourier transform becomes a straight Z-Fourier transform. 
This conversion works out also in the opposite (from the straight to the
twisted) direction. The precise details below 
show that the straightly defined 
function spaces are complete regarding the $L^2_Z$ norm in which both
$\mathbf \Phi_{\mathbf B}^n$ and the pre-space 
$\mathbf {P\Phi}_{\mathbf B}^n$ 
are everywhere dense subspaces. By this reason, the corresponding straight
spaces are denoted by
$\overline{\mathbf \Phi}_{\mathbf B}^n$ resp. 
$\overline{\mathbf {P\Phi}}_{\mathbf B}^n$. 
However, these spaces are equal. 
Ultimately, both versions of the Z-Fourier 
transforms define authomorphisms 
(one to one and onto maps) of this very same ambient function space.

This situation can be illuminated by the real $k\times k$ matrix field
$A_{ij}(K_u)$ defined on the unit Z-vectors 
which transforms the real basis
$\mathbf Q$ to the vector system 
$\mathbf B_{\mathbb R}=\{B_1,\dots ,B_{k/2},B_{(k/2)+1}=J_{K_u}(B_1),
\dots , B_k=J_{K_u}(B_{k/2})\}$. 
That is, this field is uniquely determined by the
formula $B_i=\sum_{j=1}^kA_{ij}Q_j$, 
where $i=1,\dots ,k$. The entries are
polynomials of $K_u$. By plugging these formulas 
into the twisted Z-Fourier
transform formula, one gets the straight 
representations both of the twisted
functions and their Z-Fourier transforms.     

Conversion from the straight to the twisted functions is more complicated.
In this case vectors $Q_i$ should be exchanged for vectors $B_j$
according to the formula $Q_i=\sum_{j=1}^kA^{-1}_{ij}B_j$. 
Then vectors
$B_j$ resp. $B_{(k/2)+j}$, where $j\leq k/2$, should be expressed
in the form
\begin{eqnarray}
B_j={1\over 2}(B_j+J_{K_u}(B_j))+{1\over 2}(B_j-J_{K_u}(B_j))\quad 
{\rm resp.}
\\
B_{(k/2)+j}=J_{K_u}(B_j)=
{1\over 2}(B_j+J_{K_u}(B_j))-{1\over 2}(B_j-J_{K_u}(B_j)),
\end{eqnarray}
which, after performing powering and appropriate rearranging, 
provide the desired twisted formulas.Due to the
degeneracy of the matrix field $A_{ij}$ on $S_{\mathbf B}$, some entries
of the inverse matrix field $A^{-1}_{ij}$ have limits $+\infty$ or
$-\infty$ of order at most $k/2$ on this singularity set. Therefore,
the Z-Fourier transform of a term of
the twisted function involving such functions may be not defined, despite
the fact that the Z-Fourier transform of the whole function exist which is
equal to the transform of the straightly represented function. In other
words, the infinities appearing in the separate terms cancel each other
out in the complete function.

This contradictory situation can be resolved as follows. For a given
$\epsilon >0$, let $S_{\mathbf B\epsilon}$ be the $\epsilon$-neighborhood
of the singularity set on the unit Z-sphere and let 
$\mathbb RS_{\mathbf B\epsilon}$ be the conic set covered by the rays 
emanating from the origin which are spanned by unit Z-vectors pointing
to the points of $S_{\mathbf B\epsilon}$. For an $L^2_Z$-function 
$\phi (\mathbf x,K)$ discussed above let 
$\phi_\epsilon(\mathbf x,K)$ be the function
which is the same as $\phi$ on the outside of 
$\mathbb RS_{\mathbf B\epsilon}$ and it is equal to zero in the inside of
this set. Then, regarding the $L^2_Z$-norm, 
$\lim_{\epsilon\to 0}\phi_\epsilon =\phi$ holds. For a function $F(X,K)$,
expressed straightly, define $F_\epsilon (X,K)$ by substituting each 
function $\phi$ by $\phi_\epsilon$. If function $\psi_\epsilon (K)$ is
defined by $1$ outside of $\mathbb RS_{\mathbf B\epsilon}$ and by $0$
inside, then $F_\epsilon (X,K)=\psi_\epsilon (K)F(X,K)$ holds. Convert
$F_\epsilon$ into the twisted form. Then the twisted Z-Fourier transform
is well defined for each 
term of the twisted expression, providing the same
transformed function as what is defined by the straight Z-Fourier 
transform. Thus each straightly represented $L^2_Z$ function is an 
$L^2_Z$-limit of functions which can be converted to twisted functions 
in which each twisted term is an $L^2_Z$ function having well defined 
twisted Z-Fourier transform.
In this process, function $\psi_\epsilon (K)$, which is constant in radial
directions, can be chosen such that it is of
class $C^\infty$ satisfying $0\leq\psi_\epsilon (K)\leq 1$, furthermore,
it vanishes
on $\mathbb RS_{\mathbf B\epsilon /2}$ and is equal to $1$ outside of
$\mathbb RS_{\mathbf B\epsilon}$. Thus we have:

\begin{theorem}
\label{welldef1}
The twisted functions, defined in each of 
their terms by $L^2_Z$ functions, 
form an everywhere dense subspace  ${\mathbf {P\Phi}}_{\mathbf B}^n$
in the complete space $\overline{\mathbf {P\Phi}}_{\mathbf B}^n$ 
of straightly defined 
$L^2_Z$ functions. Although the first space depends on $\mathbf B$, the
second one is a uniquely determined function space which does not 
depend neither on $\mathbf B$ nor on $\mathbf Q$.

A general function from the ambient 
space becomes an appropriate twisted function belonging 
to the dense subspace
after multiplying it with a function $\psi_\epsilon (K)$
which is zero on the above described set
$\mathbb RS_{\mathbf B\epsilon}$ and equal to $1$ on the complement of
this set. This function, $\psi_\epsilon (K)$, can be chosen
to be of class $C^\infty$
such that
it is constant in radial directions 
satisfying $0\leq\psi_\epsilon (K)\leq 1$, furthermore,
it vanishes
on $\mathbb RS_{\mathbf B\epsilon /2}$ and is equal to $1$ outside of
$\mathbb RS_{\mathbf B\epsilon}$. 
Then the $L^2_Z$-approximation is defined
by the limiting $\epsilon\to 0$.

The twisted Z-Fourier transform continuously extends
to the straight Z-Fourier transform defined on the ambient space. 
On the ambient
space this Fourier transform is an authomorphism.  
The Z-Fourier transform is a bijection between the everywhere 
dense subspaces 
${\mathbf {P\Phi}}_{\mathbf B}^n$ 
and ${\mathbf \Phi}_{\mathbf B}^n$. Particularly the relation
$\overline{\mathbf {P\Phi}}_{\mathbf B}^n=
\overline{\mathbf {\Phi}}_{\mathbf B}^n$ holds.

The same statements hold true also for Q-pole function spaces. Total
spaces  
$\mathbf{\Phi}_{\mathbf B}^{(n)}$ resp. $\mathbf{P\Phi}_{\mathbf B}^{(n)}$
are spanned by the corresponding Q-pole functions whose poles, $Q$,  
are in the real span of vectors belonging to $\mathbf B$. 
\end{theorem}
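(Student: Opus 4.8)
The plan is to make rigorous the conversion procedure outlined just before the statement, using the real $k\times k$ matrix field $A_{ij}(K_u)$ — the one carrying the real basis $\mathbf Q$ to the adapted system $\mathbf B_{\mathbb R}=\{B_1,\dots,B_{k/2},J_{K_u}(B_1),\dots,J_{K_u}(B_{k/2})\}$ by $B_i=\sum_jA_{ij}(K_u)Q_j$, with polynomial entries in $K_u$ — as the central bookkeeping device. First I would settle the soft assertions about the ambient space. A straightly defined function is, by construction, a finite sum $\sum_{|a|=n}\phi_a(|X|,Z)\prod_{i=1}^k\langle Q_i,X\rangle^{a_i}$ with each $\phi_a$ of class $L^2_Z$; since distinct degree-$n$ monomials in $X$ stay linearly independent on every sphere $|X|=r>0$, this decomposition is direct, so the straight space is a finite sum of copies of $L^2_Z$ and is therefore complete. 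Because a change of real basis $\mathbf Q\to\mathbf Q'$ induces an invertible linear substitution among these finitely many monomials, the completed space $\overline{\mathbf{P\Phi}}_{\mathbf B}^n$ is, as a set, independent of $\mathbf Q$, and — once the two-way equivalence below is in place — of $\mathbf B$ as well.

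Next I would prove the inclusion $\mathbf{P\Phi}_{\mathbf B}^n\subset\overline{\mathbf{P\Phi}}_{\mathbf B}^n$ together with the compatibility of the two Fourier transforms on it. Writing the complex coordinates as $z_{K_ui}=\Theta_{B_i}(X,K_u)=\langle B_i+\mathbf iJ_{K_u}(B_i),X\rangle$, substituting $B_i=\sum_jA_{ij}(K_u)Q_j$, expanding $\prod_iz_{K_ui}^{p_i}\overline z_{K_ui}^{q_i}$ and collecting by the monomials $\prod_j\langle Q_j,X\rangle^{a_j}$ exhibits each twisted function $\phi(\mathbf x,K)\prod_iz_{K_ui}^{p_i}\overline z_{K_ui}^{q_i}$ as a straight function whose coefficients are $\phi$ times polynomials in $K$, hence of class $L^2_Z$ for each fixed $\mathbf x$. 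Since this substitution is performed pointwise in $K$ and commutes with the $Z$-integration, the twisted Z-Fourier transform of such a function is literally the straight Z-Fourier transform of its straight representation; acting coefficientwise, the latter is, up to the usual normalizing constant, a unitary automorphism of $\overline{\mathbf{P\Phi}}_{\mathbf B}^n$ by the Plancherel theorem in the $Z$-variable.

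The reverse conversion and the density statement are the real content, and the obstruction is the degeneracy of $A_{ij}(K_u)$ on the singularity set $S_{\mathbf B}\subset S^{l-1}$ where $\mathbf B$ fails to be a $J_{K_u}$-complex basis. I would use here that $\det A_{ij}$ is a polynomial on $S^{l-1}$ which is not identically zero, so $S_{\mathbf B}$ has measure zero and the entries of $A^{-1}_{ij}$ are rational in $K_u$ with poles along $S_{\mathbf B}$ of order at most $k/2$. Given a straight function $F(X,K)$ with coefficients $\phi_a$, multiplication by the cutoff $\psi_\epsilon(K)$ — chosen $C^\infty$, constant in radial directions, with $0\le\psi_\epsilon\le1$, vanishing on $\mathbb RS_{\mathbf B\epsilon/2}$ and equal to $1$ off $\mathbb RS_{\mathbf B\epsilon}$ — produces $F_\epsilon=\psi_\epsilon F$, supported off $\mathbb RS_{\mathbf B\epsilon/2}$, precisely where $A^{-1}_{ij}$ is bounded. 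Substituting $Q_i=\sum_jA^{-1}_{ij}(K_u)B_j$, splitting $B_j=\tfrac12(B_j+J_{K_u}(B_j))+\tfrac12(B_j-J_{K_u}(B_j))$ and $J_{K_u}(B_j)=\tfrac12(B_j+J_{K_u}(B_j))-\tfrac12(B_j-J_{K_u}(B_j))$, then powering and regrouping, writes $F_\epsilon$ as a finite sum of twisted terms, each of which is an honest $L^2_Z$ function because $A^{-1}$ is bounded on the relevant cone; hence each term has a well-defined twisted Z-Fourier transform, equal to the corresponding piece of the straight transform of $F_\epsilon$. Letting $\epsilon\to0$: since $\psi_\epsilon\to1$ pointwise off the null set $S_{\mathbf B}$ and $0\le\psi_\epsilon\le1$, dominated convergence gives $F_\epsilon\to F$ in $L^2_Z$, which proves $\mathbf{P\Phi}_{\mathbf B}^n$ dense in $\overline{\mathbf{P\Phi}}_{\mathbf B}^n$ and shows that the twisted Z-Fourier transform extends continuously to the straight automorphism on the ambient space.

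Finally, applying this isometric automorphism to the dense inclusion $\mathbf{P\Phi}_{\mathbf B}^n\subset\overline{\mathbf{P\Phi}}_{\mathbf B}^n$, and recalling that $\mathbf\Phi_{\mathbf B}^n$ is by definition the image of $\mathbf{P\Phi}_{\mathbf B}^n$ under the Z-Fourier transform, yields $\overline{\mathbf{P\Phi}}_{\mathbf B}^n=\overline{\mathbf\Phi}_{\mathbf B}^n$ together with the asserted bijection $\mathbf{P\Phi}_{\mathbf B}^n\to\mathbf\Phi_{\mathbf B}^n$. The one-pole ($Q$-pole) statements are the special case $\kappa=1$ with the single coordinate $z_{K_u}=\Theta_Q$, carried out verbatim; and since $\prod_iz_{K_ui}^{p_i}\overline z_{K_ui}^{q_i}$ is a polynomial in the individual $\Theta_{B_i}$, each a $B_i$-pole function, the total spaces $\mathbf\Phi_{\mathbf B}^{(n)}$ and $\mathbf{P\Phi}_{\mathbf B}^{(n)}$ are spanned by $Q$-pole functions whose poles lie in the real span of $\mathbf B$.
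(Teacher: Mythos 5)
Your proposal is correct and follows essentially the same route as the paper: the matrix field $A_{ij}(K_u)$ for converting between twisted and straight representations, the $C^\infty$ cutoff $\psi_\epsilon$ supported off the conic neighborhood $\mathbb RS_{\mathbf B\epsilon/2}$ of the singularity set, the $L^2_Z$-limit as $\epsilon\to 0$ giving density, and linear independence of the polynomial factors for $K_u\not\in S_{\mathbf B}$ (together with ordinary Fourier/Plancherel in the $Z$-variable) giving injectivity. The paper spreads the conversion and cutoff discussion across the paragraphs preceding the theorem and reserves the post-theorem proof for the injectivity argument alone, whereas you assemble the whole thing into one coherent pass, but the substance and the key devices coincide.
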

 
In order to show the one to one property, 
suppose that the Z-Fourier transform of a
twisted function vanishes. That is:      
\begin{eqnarray}
\label{fou=0}
\int_{\mathbb R^l} e^{\mathbf i\langle Z,K\rangle}\sum_{\{(p_i,q_i)\}}
\phi_{(p_i,q_i)} (|X|,K)
\prod_{i=1}^{k/2}z^{p_i}_{K_ui}(X)\overline z^{q_i}_{K_ui}(X)dK=
\\
=\int_{\mathbb R^l} e^{\mathbf i\langle Z,K\rangle}\sum_{\{(p_i,q_i)\}}
\Phi_{(p_i,q_i)} (X,K)dK=0.
\nonumber
\end{eqnarray}
Then, for any fixed $X$, the Z-Fourier transform of  
$\sum_{\{(p_i,q_i)\}}\Phi_{(p_i,q_i)} (X,K)$ vanishes. 
Therefore, this function
must be zero for all $X$ and for almost all $K$. For vectors $K_u$
not lying in the singularity set $S_{\mathbf B}$, the $\mathbf B$ is a 
complex basis and complex polynomials 
$\prod_{i=1}^{k/2}z^{p_i}_{K_ui}(X)\overline z^{q_i}_{K_ui}(X)$
of the X-variable are linearly 
independent for distinct set $\{(p_i,q_i)\}$
of exponents. Thus all component functions $\phi_{(p_i,q_i)} (|X|,K)$ 
of the $K$ variable must vanish almost everywhere. 
This proves that the twisted
Z-Fourier transform is a one to one map of the everywhere 
dense twisted subspace onto the everywhere dense twisted range space.
The same proof, applied to straight 
functions, yields the statement on the
straight space where the Z-Fourier transform is obviously an onto map by
the well known theorem of Fourier transforms.   
 
This section is concluded by introducing several invariant functions
by which physical objects such as charge and volume will be precisely
defined. Suppose that system $\mathbf B$ consists of orthonormal
vectors and let $\mathbb B^\perp$ be the orthogonal complement of
the real span, $\mathbb B=Span_{\mathbb R}(\mathbf B)$, of vectors
belonging to $\mathbf B$. Basis $\mathbf B$ defines an orientation
on $\mathbb B$. Let $\mathbb B_+^\perp$ resp. $\mathbb B_-^\perp$
be the two possible orientations which can be chosen on $\mathbb B^\perp$
such that, together with the orientation of  $\mathbb B$, they define
positive resp. negative orientation on the X-space, $\mathbb R^k$.
Charging a particle system represented by $\mathbf B$ means choosing
one of these two orientations. Once the system is charged, choose an
orthonormal basis also in $\mathbb B^\perp$, complying with the chosen
orientation. Together with $\mathbf B$, this system defines an
orthonormal basis, $\mathbf Q$, on the X-space by which the matrix
field $A_{ij}(Z_u)$ defined above can be introduced. The invariants
of these matrix field are independent from the above basis chosen on
$\mathbb B^\perp$. Invariant functions  
$\mathit{ch}(Z_u)=Tr(A_{ij}(Z_u))-(k/2)$ and 
$\mathit{v}(Z_u)=det(A_{ij}(Z_u))$ are called {\it charger} and
{\it volumer} respectively. Their integral regarding a proper probability
density defines the charge resp. mass of the particle being on a given
proper state (these concepts are precisely 
described later in this paper).

\subsection{Hankel transform.}
This twisted Fourier transform is investigated by means of the 
Hankel transform. The statement regarding this transform 
asserts:
\begin{theorem} The Fourier 
transform considered on 
$\mathbb R^l$ 
transforms a product, 
$f(r)F^{(\nu )}(\theta )$,
of radial functions 
and spherical harmonics
to the product,
$H^{(l)}_\nu (f)(r)F^{(\nu )}(\theta )$, of the same form, i. e.,
for any fixed degree $\nu$ of the spherical harmonics, it 
induces maps,
$H^{(l)}_\nu (f)(r)$, 
on the radial functions, which,so called Hankel transform, is uniquely
determined for any fixed indices $l$ and $\nu$. 
\end{theorem}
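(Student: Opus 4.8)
The plan is to exploit the $O(l)$-equivariance of the Fourier transform together with the representation theory of the sphere, and then to pin down the explicit kernel by reducing the angular integral to a Bessel integral. First I would recall that for every $A\in O(l)$ one has $\widehat{f\circ A}=\hat f\circ A$, and that $L^2(S^{l-1})=\bigoplus_{\nu\ge 0}\mathcal H_\nu$, where $\mathcal H_\nu$ is the space of degree-$\nu$ spherical harmonics and is an irreducible $O(l)$-module. Fixing $\nu$, the subspace $V_\nu$ of (say Schwartz, or $L^1\cap L^2$) functions on $\mathbb R^l$ of the form $f(r)F^{(\nu)}(\theta)$ with $F^{(\nu)}\in\mathcal H_\nu$ is $O(l)$-invariant and, as an $O(l)$-module, is isomorphic to $R\otimes\mathcal H_\nu$, with $R$ the space of admissible radial functions carrying the trivial action. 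Since the Fourier transform is equivariant, it carries $V_\nu$ into the $\mathcal H_\nu$-isotypic component of $L^2(\mathbb R^l)$, which is again $V_\nu$; hence $\widehat{f(r)F^{(\nu)}}$ is of the form $g(r)F^{(\nu)}(\theta)$, i.e. a product of the same type.

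Next, Schur's lemma applied to the multiplicity space gives that, under the identification $V_\nu\cong R\otimes\mathcal H_\nu$, the Fourier transform acts as $H^{(l)}_\nu\otimes\mathrm{id}$ for a single linear operator $H^{(l)}_\nu$ on $R$; in particular the radial part $g=H^{(l)}_\nu(f)$ of the transform depends only on $f$ (and on the fixed data $l$ and $\nu$) and not on the chosen harmonic $F^{(\nu)}$. This already yields everything the statement asserts except the concrete description and uniqueness of $H^{(l)}_\nu$.

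Finally, to exhibit $H^{(l)}_\nu$ concretely I would take $F^{(\nu)}$ to be the restriction to $S^{l-1}$ of a solid harmonic of degree $\nu$, write the defining Fourier integral in polar coordinates, and evaluate the inner integral over $S^{l-1}$ by the Funk--Hecke formula, which gives $\int_{S^{l-1}}e^{\mathbf i r\rho\langle\omega,\eta\rangle}F^{(\nu)}(\omega)\,d\omega=c_{l,\nu}(r\rho)^{-(l/2-1)}J_{\nu+l/2-1}(r\rho)F^{(\nu)}(\eta)$. Substituting back identifies $H^{(l)}_\nu(f)(\rho)$ with a fixed constant multiple of $\rho^{-(l/2-1)}\int_0^\infty f(r)J_{\nu+l/2-1}(r\rho)r^{l/2}\,dr$, the classical Hankel transform of order $\nu+l/2-1$, which manifestly depends only on $l$ and $\nu$; uniqueness of $H^{(l)}_\nu$ is then inherited from the injectivity of the Fourier transform.

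The main obstacle is the Funk--Hecke step together with the attendant convergence bookkeeping: the Fourier and Hankel integrals are in general only conditionally convergent, so I would first establish the identity on a dense class of sufficiently rapidly decaying functions where Fubini's theorem applies, and then extend $H^{(l)}_\nu$ to the relevant space of radial $L^2$-functions by continuity, using the Plancherel theorem to control the norms. The irreducibility of $\mathcal H_\nu$ and the standard facts about $O(l)$-isotypic decompositions I would simply quote.
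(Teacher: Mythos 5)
Your proof is correct, but it takes a different route from the paper's. You argue structurally: $O(l)$-equivariance of the Fourier transform, the isotypic decomposition $L^2(\mathbb R^l)\cong\bigoplus_\nu R\otimes\mathcal H_\nu$, and Schur's lemma to force the transform to act as $H^{(l)}_\nu\otimes\mathrm{id}$ on each isotypic piece; you then make the map explicit via the Funk--Hecke formula, which produces the classical Bessel-kernel form of $H^{(l)}_\nu$. The paper instead proves the statement by a direct computation built on the mean value theorem for spherical harmonics (that $\oint_{\sigma_\rho(\theta)}F^{(\nu)}\,d\sigma_{no}=F^{(\nu)}(\theta)\varphi_{\lambda_\nu}(\rho)$ for the zonal eigenfunction $\varphi_{\lambda_\nu}$), then slices $\mathbb R^l$ by hyperplanes orthogonal to the $\theta_Z$-axis, applies Fubini, and passes to polar coordinates with the substitution $\tau=|t||\tan\rho|$; this yields an explicit iterated integral for $H^{(l)}_\nu(f)$ in terms of $\varphi_{\lambda_\nu}$ rather than the Bessel form. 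The two inputs are near-equivalent: the mean value identity is precisely the Funk--Hecke formula specialized to (normalized) surface measure on a sub-sphere, and integrating it against a radial profile recovers Funk--Hecke for general kernels. What your argument buys is conceptual transparency — equivariance plus irreducibility explain \emph{why} the transform must be of product form before any computation — and the standard, immediately recognizable Bessel formula. What the paper's argument buys is self-containment (no representation-theoretic apparatus needed) and a formula tailored to the way $H^{(l)}_\nu$ is subsequently used there, where only its existence and uniqueness for fixed $(l,\nu)$ matter. One small caution in your Schur step: you should say explicitly that you work over $\mathbb C$ (so $\mathrm{End}_{O(l)}(\mathcal H_\nu)\cong\mathbb C$ for all $l\geq 2$ and $\nu\geq 1$) to get the clean $H\otimes\mathrm{id}$ factorization; over $\mathbb R$ the endomorphism ring can be larger and the bookkeeping is less tidy.
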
 

This is actually a weak form of the original Hankel theorem
which can be directly settled by the following
mean value theorem of the spherical harmonics, $F^{(\nu )}(\theta )$,
defined on the unit sphere, $S$, about the origin 
of $\mathbb R^l$. These functions are eigenfunctions of the Laplacian
$\Delta_S$ with eigenvalue $\lambda_\nu$, moreover, there exists a 
uniquely determined radial eigenfunction $\varphi_{\lambda_\nu}(\rho )$,
where $0\leq \rho\leq\pi$ and $\varphi_{\lambda_\nu}(0)=1$, on $S$ which
has the same eigenvalue $\lambda_\nu$ such that, 
on a hypersphere $\sigma_\rho(\theta)\subset S$ 
of radius
$\rho$ and center $\theta$ on the ambient sphere $S$, the identity
$\oint_\sigma F^{(\nu )}d\sigma_{no} =F^{(\nu )}(\theta )
\varphi_{\lambda_\nu}(\rho )$ holds, where $d\sigma_{no}$ 
is the normalized  measure measuring $\sigma$ by $1$. 

This mean value theorem can be used
for computing the Fourier transform
$\int_{\mathbb R^l}e^{\mathbf i\langle Z,K\rangle}f(|K|)F^{(\nu )}
(\theta_K)dK$ at a point $Z=(|Z|,\theta_Z)$.
This integral is computed by Fubini's theorem such that one considers
the line $l_Z(t)$, spanned by $\theta_Z$; parameterized with arc-length
$t$; and satisfying $l_Z(Z)>0$, and one computes the integrals first in
hyperplanes intersecting $l_Z$ at $t$ perpendicularly and then on 
$l_Z$ by $dt$.
On the hyperplanes, write up the integral in polar coordinates defined
around the intersection point with $l_Z$, where the radial Euclidean 
distance from this origin is denoted by $\tau$. Consider polar 
coordinates also
on $S$ around $\theta_Z$, where the radial spherical distance from 
this origin is denoted by $\rho$. Then $\tau =|t||\tan \rho |$ holds,
furthermore, a straightforward computation yields:
\begin{eqnarray}
 \int_{\mathbb R^l}e^{\mathbf i\langle Z,K\rangle}f(|K|)F^{(\nu )}
(\theta_K)dK=
H^{(l)}_\nu (f)(r)F^{(\nu )}(\theta_Z),\quad {\rm where}
\\
H^{(l)}_\nu (f)(r)=\Omega_{l-2}\int_{-\infty}^\infty 
e^{\mathbf irt} |t|^{l-1}
\int_0^\pi f(|t\tan\rho |)\varphi_{\lambda_\nu}(\rho )
{\sin^{l-2}\rho\over |\cos\rho |^l}
d\rho dt,
\nonumber
\end{eqnarray}
where $\Omega_{l-2}$ denotes the volume of an $(l-2)$-dimensional
Euclidean unit sphere.
These formulas prove the above statement completely.

\subsection{Projecting to spherical harmonics.}
Among the other mathematical tools by which twisted Z-Fourier
transforms are investigated are the projections 
$
\Pi_{K_u}^{(r,s)} (\varphi (K_u)
\prod_{i=1}^{\kappa}z^{p_i}_{K_ui}(X)\overline z^{q_i}_{K_ui}(X)),
$
corresponding $s^{th}$-order polynomials to  
$r^{th}$-order polynomials of the $K_u$-variable. 
Although the functions they are applied to may
depend also on the X-variable, they refer strictly to the $K_u$-variable, 
meaning, that they are performed, over each X-vector, in the Z-space. 
These characteristics are exhibited also by the fact that these 
projections appear as certain polynomials of the Laplacian
$\Delta_{K_u}$ defined on the unit K-sphere. It is much more convenient 
to describe them in terms of homogeneous functions, which are projected
by them to harmonic homogeneous polynomials. In this version 
these polynomials depend on $K$ and the projections can be described 
in terms of the Laplacian $\Delta_{K}$ defined on the ambient space. 
By
restrictions onto the unit K-spheres, one can easily find then the
desired formulas in terms of $\Pi_{K_u}^{(r,s)}$.

For an $n^{th}$ order homogeneous polynomial, $P_n(K)$, 
projection $\Pi^{(n)}_K:=\Pi^{(n,n)}_K$ onto the space of $n^{th}$ order 
harmonic polynomials can be computed by the formula
\begin{equation}
\label{proj}
\Pi_K^{(n)}(P_n(K)) 
=\sum_s C^{(n)}_s \langle K,K\rangle^s\Delta^s_K(P_n(K)),
\end{equation}
where $C^{(n)}_0=1$ and the other coefficients can be determined
by the recursive formula
$
2s(2(s+n)-1)C^{(n)}_s+C^{(n)}_{s-1}=0.
$
In fact, exactly for these coefficients is the function defined
by an arbitrary $P_n(K)$ on the right side a homogeneous 
harmonic polynomial.
These formulas can be easily established for polynomials  
$P_n(K)=\langle W,K\rangle^n$, defined by a fixed Z-vector $W$. 
Since they span
the space of $n^{th}$ order homogeneous polynomials, the statement
follows also for general complex valued polynomials. 
This projection is a surjective map of the $n^{th}$-order 
homogeneous polynomial space
$\mathcal P^{(n)}(K)$ onto the space, $\mathcal H^{(n)}(K)$, 
of the $n^{th}$-order homogeneous harmonic polynomials   
whose kernel  
is formed by polynomials of the form  $\langle K,K\rangle P_{n-2}(K)$. 
Subspace
$\mathcal H^{(n)}\subset \mathcal P^{(n)}$ is a complement
to this kernel.

The complete decomposition of $P_n(K)$ appears in the
form $P_n(K)=\sum_i\langle K,K\rangle^i HP_{n-2i}(K)$, where $i$ 
starts with $0$ and running through the integer part, $[n/2]$, of
$n/2$, furthermore, $HP_{n-2i}(K)$ are harmonic 
polynomials of order $(n-2i)$. It can
be established by successive application of the above computations. In the
second step in this process, one considers the functions 
$P_n(K)-\Pi_K^{(n)}(P_n(K))$, which appear in the form 
$\langle K,K\rangle P_{n-2}(K)$, and obtains $HP_{n-2}(K)$ by the
projection $\Pi_K^{(n-2)}(P_{n-2}(K))$. Then, also this second term
is removed from $P_n(K)$, in order to get ready for the third step,
where the same computations are repeated. This process can be completed
in at most $[n/2]$ steps. It is clear that
projections $\Pi_K^{(n,n-2i)}$ resulting functions $HP_{n-2i}(K)$
from $P_n(K)$ are of the form  
$\Pi_K^{(n,n-2i)}=D_{(n,n-2i)}\Pi_K^{(n-2i,n-2i)}\Delta_K^i$,
where indices $(n,n-2i)$ indicates that the projection maps 
$n^{th}$-order polynomials
to $(n-2i)^{th}$-order harmonic polynomials (in this respect, projection
$\Pi_K^{(n)}$ is the same as $\Pi_K^{(n,n)}$). The technical calculation
of constants $D_{(n,n-2i)}$ 
is omitted. The corresponding projections $\Pi_{K_u}^{(r,s)}$ defined
on the unit spheres can immediately be established by these projections
defined for homogeneous functions. They appear
as polynomials of the Laplacian $\Delta_{K_u}$ defined on the unit
$K_u$-sphere. 

Since these projections depend just on the degrees $r$ and $s$,
they apply also to twisted functions which
depend on the X-variable as well.
In order to make the Hankel transform applicable, they are used for
decomposing functions in the form
\begin{eqnarray}
\sum_{(r;s)} f_{(r;s)}(\mathbf x,\mathbf k)
\Pi_{K_u}^{(r,s)} (\varphi (K_u)
\prod_{i=1}^{\kappa}z^{p_i}_{K_ui}(X)\overline z^{q_i}_{K_ui}(X))\\
=f_{\alpha}(\mathbf x,\mathbf k)
\Pi_{K_u}^{\alpha} (F^{(p_i,q_i)}(X,K_u)),
\nonumber
\end{eqnarray}
where the right side is just a short way 
to describe the sum appearing on the left side in terms of compound 
indices $\alpha =(r,s)$ and functions $F^{(p_i,q_i)}$.

To be more precise, functions 
$\phi_n(\mathbf x,K)P^{(n)}(X,K_u)$ 
appearing in the pre-spaces  
must be brought to appropriate forms
before these projections can directly be applied to them. First of all, 
function $\phi_n(\mathbf x,K)$ should be considered in the
form $\phi_n(\mathbf x,K)=\sum_v\phi_{n,v}
(\mathbf x,\mathbf k)\varphi^{(v)}(K)$, 
where 
$\varphi^{(v)}(K)$ is an $v^{th}$-order homogeneous harmonic polynomial.
Then, after implementing all term-by-term multiplications in the 
products of
$\Theta_{B_i}=\langle B_i,X\rangle+\langle\mathbf iJ_{K_u}(B_i),X\rangle$
and
$\overline\Theta_{B_i}=\langle B_i,X\rangle-
\langle\mathbf iJ_{K_u}(B_i),X\rangle$, the above polynomials
have to be 
taken to the form  $P^{(n)}(X,K_u)=\sum_{a=0}^nP^{(n,a)}(X,K_u)$,
where polynomial $P^{(n,a)}$ involves exactly $a$ number of 
linear polynomials of the form $\langle J_{K_u}(Q_i),X\rangle$. The 
above projections defined in terms of $r$ directly act on functions 
$\varphi^{(v)}(K)P^{(n,a)}(X,K)$ satisfying $r=v+a$, which are
obtained by term by term multiplications of the sums given above for 
$\phi_n(\mathbf x,K)$ and $P^{(n)}(X,K_u)$. 

This complicated process can be considerably simplified by
considering only one-pole functions defined for single
$Q$'s which are in the real span of the vector system $\mathbf B$. 
In fact, all the 1-pole total spaces
$\mathbf\Phi_{Q}^{(n)}=\sum_{p,q}\mathbf\Phi_{Qpq}$, where $n=p+q$, 
span also the total space 
$\sum_{(p_i,q_i)}\mathbf\Phi^{(n)}_{\mathbf B}$, thus there is 
really enough to
establish the theorems just for these simpler  
one-pole functions. In this case, function $P^{(n,a)}(X,K_u)$ 
is nothing but a constant-times of function 
\begin{equation}
R_Q^{(n,a)}(X,K_u)=\langle Q,X\rangle^{n-a} 
\langle J_{K_u}(Q),X\rangle^{a}=
\langle Q,X\rangle^{n-a} \langle [Q,X],K_u\rangle^{a}.
\end{equation}
Note that, depending on $p$ and $q$, 
the component 
of a particular $P^{(n)}(X,K_u)$ corresponding to a
given $0\leq a\leq n$ may vanish. However, there exist such pairs $(p,q)$
for which this $a$-component is non-zero. Space 
$\mathbf P^{(n,a)}(X,K)$ is defined by the span of functions 
$R_Q^{(n,a)}(X,K)$. Note that $K_u$ has been changed to $K$ in the above 
formula. These functions are $n^{th}$- resp. $a^{th}$-order homogeneous
polynomials regarding the X- resp. K-variables. Keep
in mind that these functions can be derived from functions 
$\Theta_Q^p\overline\Theta_Q^q$,
where $p+q=n$, by linear combinations, therefore, they belong to the
above twisted function spaces. More
precisely, for a given $n$, there exist an invertible matrix 
$M^{(n,a)}_{pq}$
such that
$R_Q^{(n,a)}(X,K_u)=\sum_{p,q}M^{(n,a)}_{pq}\Theta_Q^p\overline\Theta_Q^q$
hold, where $p+q=n$.

Projections
$\Pi_X^{(n)}=\sum_s C^{(n)}_s \langle X,X\rangle^s\Delta^s_X$ 
acting on functions   
$P(X,K_u)=\Theta^p_{Q}(X,K_u)\overline
\Theta^q_{Q}(X,K_u)$ resp.
$P(X,K_u)=\prod_{i=1}^{\kappa}z^{p_i}_{K_ui}(X)
\overline z^{q_i}_{K_ui}(X))$,
where $p+q=n$ resp. $\sum_ip_i+q_i=n$, regarding the X-variable are also 
involved to these investigations. 
Since they are $n^{th}$-order homogeneous functions
regarding $X$, projection $\Pi_X^{(n)}$ applies to them
immediately.
Then, for any fixed $K_u$,
function  $\Pi_X^{(n)}(P(X,K_u))$ is an $n^{th}$-order homogeneous 
harmonic polynomial regarding the X-variable. 
The twisted Z-Fourier transforms,
$\mathcal{HF}_{\mathbf B(p_iq_i)}(\phi )(X,Z)$, 
involving these projections are defined by

\begin{eqnarray}
\label{projio1}
\int_{\mathbb R^l} e^{\mathbf i\langle Z,K\rangle}
\phi (|X|,K)\Pi_X^{(n)}
(\prod_{i=1}^{k/2}z^{p_i}_{K_ui}(X)\overline z^{q_i}_{K_ui}(X))dK.
\end{eqnarray}
 
The corresponding $L^2_Z$ function spaces spanned by the transformed 
functions and the pre-space are denoted by
$\mathbf \Xi_{\mathbf Bp_iq_i}^{(n)}$ and  
$\mathbf{P\Xi}_{\mathbf Bp_iq_i}^{(n)}$ respectively. These function spaces
are well defined also for one-pole functions and also for the third type
of Z-Fourier transforms defined for Dirac type generalized functions
concentrated on Z-sphere bundles.

Although the following results are not used in the rest part of this paper,
because of their importance, we describe some mathematical 
processes by which these
projections can explicitly be computed.
Further on, the formulas concern a fixed $n$ even if 
it is not indicated there. 
Decomposition into 
K-harmonic polynomials will be implemented for the K-homogeneous functions 
$\varphi^{(v)}(K)R_Q^{(a)}(X,K)$ of order $r=v+a$. 
According to two different representations of the first function,
these projections
will be described in two different ways.
The first description is more or less technical, yet, very useful 
in proving the independence theorems stated below. 
In the second 
description, the projected functions are directly constructed.
In both cases we consider Q-pole functions defined by a unit
vector $Q\in Span_{\mathbb R}{\mathbf B}=\mathbb B$. However,
the multipole cases 
referring to vector systems $\mathbf B$ are also discussed
in the theorem established below.
 
According to the formula (\ref{projio1}), a pure harmonic 
one-pole function
with pole $\zeta$ in the K-space is of the form
$\varphi_{\zeta}^{(v)}(K)=
\sum_s D_{2s}\mathbf k^{2s}\langle\zeta ,K\rangle^{v-2s}$,
where $D_0=|\zeta |^{v}$ and the other coefficients 
depending on $|\zeta |^{v-2s}$ and constants $C_s$ 
are uniquely determined by the
harmonicity assumption. It is well known that these functions span the
space of homogeneous harmonic polynomials. Functions 
$\partial^c_{\tilde K}\gamma_{\zeta}^{(v)}$ obtained by 
directional derivatives regarding a fixed vector $\tilde K$ 
are also homogeneous harmonic $\zeta$-pole functions of order $(v-c)$.

The action of operator $\Delta^b_{K}$ on 
$\varphi_\zeta^{(v)}(K)R_Q^{(a)}(X,K)$ results function:

\begin{equation}
\sum_{c=0}^{[{a\over 2}]}D_c\langle Q,X\rangle^{n-a}
(\partial_\zeta^{b-c}\varphi_\zeta^{(v)})(K)
\langle J_\zeta (Q),X\rangle^{b-c}  \langle J_{K} (Q),X\rangle^{a-b-c}
|[Q,X]|^{2c}.
\nonumber
\end{equation}    
The terms of this sum are obtained such that $\Delta_K^c$ acts
on $R_Q^{(n,a)}(X,K)$, resulting the very last term, while the others 
are due to the action of $\Delta^{b-c}$ on the product according to
the formula
$
\partial_\zeta^{b-c}\gamma_\zeta^{(v)}
\partial_\zeta^{b-c}R_Q^{(a)}(X,K)
$. 
Note that, because of the harmonicity, the action of $\Delta_K$ on 
$\varphi_\zeta^{(v)}$ is trivial.

When the complete projection $\Pi^{(s)}_K$ is computed, then
the $b$'s involved to the formula are denoted by $b_j$. 
For given $c$, factor out
$|[Q,X]|^{2c}$ from the corresponding terms. Thus the final projection
formula appears in the form
$\sum_c|K|^{4c}P_c^{(s)}(X,K)|[Q,X]|^{2c}$,
where term $P_c^{(s)}$ is equal to

\begin{equation}
\label{P^s_q}
\sum_{j}D_{cj}\langle Q,X\rangle^{n-a}
(\partial_\zeta^{b_j-c}\varphi_\zeta^{(v)})(K)
\langle J_\zeta (Q),X\rangle^{b_j-c}  \langle J_{K} (Q),X\rangle^{a-b_j-c}.
\end{equation}    

This is a rather formal description of the projected functions.
A more concrete construction is as follows. 
The linear map $\mathcal X\to\mathcal Z$
defined by $X\to [Q,X]$ is surjective 
whose kernel is a $(k-l)$-dimensional
subspace. Next projections $\Pi_K$ will be investigated in 
the Z-space over such an X-vector, $\tilde X$, which is not in this kernel 
and the unit vector 
$\zeta_Q(\tilde X)$ defined by $[Q,\tilde X]=
|[Q,\tilde X]|\zeta_Q(\tilde X)$ is not in the 
singularity set $S_{\mathbf B}$. 
If $X_Q$ denotes the orthogonal projection of
$X$ onto the $l$-dimensional subspace spanned by $Q$ and vectors 
$J_{K_u}(Q)$ considered for all unit vectors $K_u$, then 
$|[Q,X]|^{2}=|X_Q|^2- \langle Q,X\rangle^{2}$.

The direct representation of $\varphi^{(v)}$ is the product
of two harmonic one-pole functions having perpendicular poles. 
One of the poles is $\tilde\zeta=\zeta_Q(\tilde X)$ while the other
is an arbitrary perpendicular unit vector 
$\tilde\zeta^\perp \not\in S_{\mathbf B}$. Then,
for all $0\leq c\leq v$, consider the product 
$\varphi^{(v)}=\varphi_{\tilde\zeta^\perp}^{(c)}
\varphi_{\tilde\zeta}^{(v-c)}$
of harmonic homogeneous one-pole functions. Because of the perpendicular
poles, these products are also harmonic functions of the K-variable, 
furthermore, considering them for all $0\leq c\leq v$ and 
$\tilde\zeta^\perp$, they span the whole space of $v^{th}$-order 
homogeneous harmonic polynomials
of the K-variable, for each point $X$. However, $\tilde\zeta$ is not 
pointing into the direction of $[Q,X]$ in general. 
When such a pure K-function is multiplied
with $R_Q^{(n,a)}(X,K)$, over $\tilde X$, for the required projections, 
only the decomposition
of $\varphi_{\tilde\zeta}^{(v-c)}\langle \tilde\zeta ,K\rangle^a$ 
should be determined. A simple calculation shows:
\begin{equation}
\varphi_{\tilde\zeta}^{(v-c)}\langle \tilde\zeta ,
K\rangle^a=\sum_{i=0}^{a}
D_i\langle K,K\rangle^i \varphi_{\tilde\zeta}^{(v-c+a-2i)}.
\end{equation}
This function multiplied with 
$\langle Q,X\rangle^{n-a}|[Q,\tilde X]|^a 
\varphi_{\tilde\zeta^\perp}^{(c)}$
provides the desired decomposition and projections.
When this projected function is considered over an arbitrary $X$, neither
$\varphi_{\tilde\zeta^\perp}^{(c)}$ nor
$\varphi_{\tilde\zeta}^{(v-c+a-2i)}$  appear as pure K-functions. 
One can state only that, for all $i$, this product is an 
$a^{th}$-order homogeneous
polynomial, also regarding the X-variable. 
This complication is due to that  
$\zeta_Q(\tilde X)$ and $\zeta_Q(X)$ are not parallel, 
thus the projection operator 
involves both functions to the computations.  
According to these arguments we have:

\begin{theorem}
\label{projth}
(A)
For given non-negative integers $r$, $a$, and $(v-a)\leq s\leq (v+a)$;
where $s$ has the same parity as $(v+a)$ or $(v-a)$; functions
$\varphi^{(v)}(K)R_Q^{(n,a)}(X,K)$ project to $(v+a)^{th}$-order 
K-homogeneous functions which, over $\tilde X$, appear in the form

\begin{eqnarray}
\label{proj_s}
D_{(v+a-s)/2}\langle K,K\rangle^{(v+a-s)/2}
\langle Q,X\rangle^{n-a}|[Q,\tilde X]|^a
\varphi_{\tilde\zeta^\perp}^{(c)}
\varphi_{\tilde\zeta}^{(s-c)},
\end{eqnarray}
where $0\leq c\leq v$, which, by omitting term 
$\langle K,K\rangle^{(v+a-s)/2}$, 
are $s^{th}$-order homogeneous K-harmonic
polynomials. For given $v$ and $a$, this projection is trivial
(that is, it maps to zero) for all those values $s$ which do not 
satisfy the above
conditions. Above, exactly the non-trivial terms are determined.

(B) Functions 
$\langle K,K\rangle^{(v+a-s)/2}$ and $\langle Q,X\rangle^{n-a}$
of degrees $(v+a-s)$ resp. $(n-a)$ 
are independent from $c$, furthermore, function
$\varphi_{\tilde\zeta^\perp}^{(c)}\varphi_{\tilde\zeta}^{(s-c)}$
is an $a^{th}$-order homogeneous polynomial of the X-variable.
It follows that, for a given $s$,
the subspaces, $\mathbf {PHo}_Q^{(v,a)}$, spanned by projected functions 
considered for distinct pairs $(v,a)$ are independent, more precisely,
$\sum_{(v,a)}\mathbf {PHo}_Q^{(v,a)}=\mathbf {PHo}_Q^{(s)}$ is a finite
direct sum decomposition of the corresponding space of complex valued 
homogeneous functions which are $n^{th}$-order regarding the X-variable
and $s^{th}$-order harmonic functions regarding the Z-variable. 
This decomposition is further graded by the subspaces, 
$\mathbf {PHo}_Q^{(v,a,c)}$ defined for distinct $c$'s introduced above.

(C) These statements remain true for the total spaces 
$\mathbf {PHo}_{\mathbf B}^{(v,a)}$ and
$\mathbf {PHo}_{\mathbf B}^{(s)}$ obtained by summing up all the 
corresponding previous spaces defined for $Q$'s which are in the
real span of independent vector-system $\mathbf B$. Since the projections 
$\Pi_X$ and $\Pi_K$
commute, they remain true also for spaces  
$\mathbf {PXo}_{\mathbf B}^{(v,a)}$ and
$\mathbf {PXo}_{\mathbf B}^{(s)}$ (as well as for versions defined
for fixed $Q$'s) obtained by applying $\Pi_X$ to the corresponding 
spaces $\mathbf {PHo}$. 
\end{theorem}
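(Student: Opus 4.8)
The plan is to assemble the three parts from the one-pole computations already set up above, the master harmonic-projection formula (\ref{proj}), and the one-pole contraction identity $\varphi_{\tilde\zeta}^{(v-c)}(K)\langle\tilde\zeta,K\rangle^{a}=\sum_{i=0}^{a}D_i\langle K,K\rangle^{i}\varphi_{\tilde\zeta}^{(v-c+a-2i)}(K)$ recorded just before the theorem. For (A), I would first reduce to a generic $X$: fix $\tilde X$ outside the kernel of $X\mapsto[Q,X]$ with $\tilde\zeta:=\zeta_Q(\tilde X)=[Q,\tilde X]/|[Q,\tilde X]|\notin S_{\mathbf B}$, so that over $\tilde X$ one has $R_Q^{(n,a)}(\tilde X,K)=\langle Q,\tilde X\rangle^{\,n-a}|[Q,\tilde X]|^{\,a}\langle\tilde\zeta,K\rangle^{\,a}$ and only the harmonic decomposition of $\varphi^{(v)}(K)\langle\tilde\zeta,K\rangle^{a}$ in $K$ is needed. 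Since the products $\varphi_{\tilde\zeta^{\perp}}^{(c)}\varphi_{\tilde\zeta}^{(v-c)}$ over perpendicular unit poles $\tilde\zeta^{\perp}\notin S_{\mathbf B}$ and $0\le c\le v$ span the $v^{th}$-order harmonic polynomials, it suffices to take $\varphi^{(v)}=\varphi_{\tilde\zeta^{\perp}}^{(c)}\varphi_{\tilde\zeta}^{(v-c)}$; applying the contraction identity to $\varphi_{\tilde\zeta}^{(v-c)}(K)\langle\tilde\zeta,K\rangle^{a}$ and multiplying by $\varphi_{\tilde\zeta^{\perp}}^{(c)}$ produces $\sum_{i=0}^{a}D_i\langle K,K\rangle^{i}\,\varphi_{\tilde\zeta^{\perp}}^{(c)}\varphi_{\tilde\zeta}^{(v-c+a-2i)}$, each summand being already harmonic of degree $v+a-2i$ as a product of one-pole harmonics with orthogonal poles. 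Hence this is the full harmonic decomposition; the $s^{th}$-order component survives exactly when $i=(v+a-s)/2$ is an integer in $[0,a]$, i.e. $v-a\le s\le v+a$ with $s\equiv v+a\pmod 2$, and it equals the asserted (\ref{proj_s}) — with $\varphi^{(j)}\equiv 0$ for $j<0$ — once the prefactor $\langle Q,\tilde X\rangle^{\,n-a}|[Q,\tilde X]|^{\,a}$ is restored; for all other $s$ the projection vanishes. That the same formula holds over an arbitrary $X$, now as a genuine polynomial in $X$, follows because $\Pi_K^{(v+a,s)}$ is a fixed polynomial in $\Delta_K$ and $\langle K,K\rangle$, so the projected function is polynomial in $X$ and agrees with the displayed expression on the Zariski-dense set of admissible $\tilde X$.

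For (B) the key is a bidegree bookkeeping. The factors $\langle K,K\rangle^{(v+a-s)/2}$ and $\langle Q,X\rangle^{\,n-a}$ visibly do not involve $c$; and from the explicit shape (\ref{P^s_q}), together with the fact that every application of $\Delta_K$ to $R_Q^{(n,a)}$ trades two powers of $K$ for the degree-two factor $|[Q,X]|^{2}=|X_Q|^{2}-\langle Q,X\rangle^{2}$, the $K$-harmonic part of the projected function carries $X$-degree exactly $a$ while the displayed prefactor carries $X$-degree $n-a$. Thus inside the fixed-$s$ space a member of $\mathbf{PHo}_Q^{(v,a)}$ is pinned down by two invariants that are untouched by the remaining choices, namely the power $(v+a-s)/2$ of $\langle K,K\rangle$ and the $X$-degree $a$ of its $K$-harmonic part; since $s$ is fixed these recover $(v,a)$, so $\sum_{(v,a)}\mathbf{PHo}_Q^{(v,a)}=\mathbf{PHo}_Q^{(s)}$ is a direct sum. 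Inside each $\mathbf{PHo}_Q^{(v,a)}$ the grading by $c$ is then separated by the degree $s-c$ of the harmonic factor along the $\tilde\zeta$-pole, the functions $\varphi_{\tilde\zeta^{\perp}}^{(c)}\varphi_{\tilde\zeta}^{(s-c)}$ for distinct $c$ being linearly independent.

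For (C), the multipole spaces reduce to one-pole ones, since $\mathbf\Phi_{\mathbf B}^{(n)}=\sum_{(p_i,q_i)}\mathbf\Phi_{\mathbf B p_iq_i}^{(n)}$ is spanned by the one-pole spaces $\mathbf\Phi_Q^{(n)}$ with $Q$ in the real span of $\mathbf B$, and all the invariants used in (B) are independent of which admissible pole $Q$ is chosen; hence $\sum_{(v,a)}\mathbf{PHo}_{\mathbf B}^{(v,a)}=\mathbf{PHo}_{\mathbf B}^{(s)}$ is again direct. Since $\Pi_X$ is built from $\Delta_X$ and $\langle X,X\rangle$, which commute with $\Delta_K$ and $\langle K,K\rangle$, it commutes with $\Pi_K$, so it preserves the $(v,a)$-grading (cut out by the $K$-side operators it commutes with) and carries the direct-sum decompositions onto $\sum_{(v,a)}\mathbf{PXo}_{\mathbf B}^{(v,a)}=\mathbf{PXo}_{\mathbf B}^{(s)}$ and its $Q$-pole analogues. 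I expect the main obstacle to be exactly the directness assertions of (B): certifying that the $K$-harmonic part of the projected function really has $X$-degree $a$, so that $(v,a)$ is genuinely reconstructible, and that the $c$-grading is an honest direct sum rather than just a spanning family — both require patiently tracking degrees through (\ref{proj}) and (\ref{P^s_q}) rather than any new idea.
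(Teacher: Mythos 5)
Your proof of (A) follows exactly the line the paper itself develops in the two paragraphs preceding the theorem: restrict to a generic $\tilde X$, express $R_Q^{(n,a)}(\tilde X,K)=\langle Q,\tilde X\rangle^{n-a}|[Q,\tilde X]|^a\langle\tilde\zeta,K\rangle^a$, feed the contraction identity $\varphi^{(v-c)}_{\tilde\zeta}\langle\tilde\zeta,K\rangle^a=\sum_iD_i\langle K,K\rangle^i\varphi^{(v-c+a-2i)}_{\tilde\zeta}$, and read off the $s$-component. The paper does not explicitly spell out the polynomial-extension step back to general $X$, but your Zariski-density remark is the correct closure of that argument, and it is what is implicitly being used.

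For (B) you and the paper are both doing degree bookkeeping to establish the directness, and both invoke the same displayed formula (\ref{P^s_q}); but the invariants tracked are not phrased identically. The paper's argument is (i) the factors $|[Q,X]|^{2c}$ with distinct powers $c$ are independent, and (ii) over a fixed $\tilde K_u$, the $P^{(s)}_c(X,\tilde K)$ have distinct real and imaginary degrees with respect to the complex structure $J_{\tilde K_u}$ for distinct $a$. You instead reconstruct $(v,a)$ from the $\langle K,K\rangle$-power (which together with the fixed $s$ pins down $v+a$) and the $X$-degree of the $K$-dependent part (which gives $a$). These are equivalent at heart — both are bigradings read off from the explicit projected form — but beware that your claim ``the $K$-harmonic part of the projected function carries $X$-degree exactly $a$'' is delicate, because $\tilde\zeta_Q(X)$ and $|[Q,X]|$ themselves carry $X$-degree; the paper sidesteps this by instead counting bi-degrees in $\langle Q,X\rangle$ versus $\langle J_{\tilde K_u}(Q),X\rangle$ inside $P_c^{(s)}$, which is a cleaner invariant. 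You acknowledge this as the fragile point, and indeed if you want to make your version airtight you should translate it into the same kind of $z,\bar z$ bi-degree bookkeeping the paper performs. For (C) the two arguments genuinely diverge: the paper does not reduce to one-pole functions but instead generalizes (\ref{P^s_q}) to the multipole case directly, obtaining factors $\langle[B_i,X],[B_j,X]\rangle^{v_{ij}}$ and rerunning the degree argument on them. Your route — reduce to one-pole spaces via the spanning observation and then claim that pole-independence of the invariants suffices — buys brevity but hides a step: for a multipole sum $\sum_Q\sum_{(v,a)}f_{Q,(v,a)}=0$ with $Q$ ranging over the span of $\mathbf B$, one must verify that the $(v,a)$-extracting invariants still act coherently across different one-pole summands, i.e.\ that they are operator-theoretic projections on the whole multipole space rather than labels attached pole by pole. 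That is exactly the content the paper supplies by examining the cross terms $\langle[B_i,X],[B_j,X]\rangle^{v_{ij}}$. Without that check, your (C) is a plausible sketch but not a complete proof.
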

\begin{proof} 
The independence-statement in (B) follows also from (\ref{P^s_q}), because
functions $|[Q,X]|^{2c}$ regarding distinct powers $2c$ are independent
and, for a fixed $\tilde K_u$, also functions  $P_c^{(s)}(X,\tilde K)$
of the X-variable expressed by means of complex structure $J_{\tilde K_u}$
as a complex valued function has distinct real and imaginary degrees
with respect to distinct $a$'s. That is, these functions defined for a
fixed $a$ can not be a linear combination of the others defined for other
$a$'s.

Statement (C) can be established by an appropriate generalization of
(\ref{P^s_q}).
Instead of Q-pole functions, now functions belonging to
$\mathbf{PHo}_{\mathbf B}^{(v,a_i)}$ should be projected, where 
$a=\sum a_i$ and degree $a_i$ regards $B_i$. In this
situation, one gets functions 
$\langle [B_i,X],[B_j,X]\rangle^{v_{ij}}$ 
multiplied with the corresponding functions $P^{(s)}_{c_{ij}}(X,V)$,
which, for a fixed $\tilde K_u$ and system $c_{ij}$ of exponents 
have distinct real and imaginary
degrees regarding $J_{\tilde K_u}$ with respect to distinct  
$\sum_i a_i=a\not =a^\prime =\sum_ia^\prime_i$.
\end{proof}

\subsection{Twisted Hankel decomposition.}

In order to construct the complete pre-spaces 
$\mathbf{P\Phi}^{(n)}_Q$, $\mathbf{P\Phi}^{(n)}_{\mathbf B}$,
$\overline{\mathbf{P\Phi}}^{(n)}_Q$, and 
$\overline{\mathbf{P\Phi}}^{(n)}_{\mathbf B}$, 
by means of functions in
$\mathbf{PHo}^{(n,v,a)}_Q$ resp. $\mathbf{PHo}^{(n,v,a)}_{\mathbf B}$,
they must be multiplied with functions of the form $\phi (|X|,|K|)$ which 
multiplied with $|K|^{v+a}$ incorporated to the $\mathbf{PHo}$-functions
provide K-radial $L^2_K$-functions for any fixed $|X|=\mathbf x$. 
These functions
summed up regarding $(v,a)$ span the corresponding $n^{th}$-order twisted
spaces whose  $L^2_K$-closures provide the complete space which can  
also be introduced by straightly defined functions. 
Since they are everywhere dense in the straightly defined function spaces,
the twisted spaces
must be complete regarding the $\mathbf{PHo}$-spaces.
Thus the completion of twisted
spaces is ultimately implemented on the space of K-radial functions.

Projections $\Pi^{(s)}_K$ should be defined for functions belonging to
$\mathbf{P\Phi}^{(n,v,a)}_Q$ resp. $\mathbf{P\Phi}^{(n,v,a)}_{\mathbf B}$,
first. Note that this operation has no effect on radial functions. It's
action is restricted to the spherical harmonics defined by restricting
the above homogeneous K-harmonic polynomials to the unit K-sphere over
each point $X$. By this interpretation, these projection operators
can be expressed as polynomials of the Laplacian $\Delta_{K_u}$ defined
on this unit sphere. The function spaces obtained by projecting the
whole corresponding ambient spaces are denoted by 
$\mathbf{P\Phi}^{(n,s)}_Q$ resp. $\mathbf{P\Phi}^{(n,s)}_{\mathbf B}$.
There is described in the previous theorem that which functions labeled
by $(v,a)$ provide non-trivial $s$-components in these operations. They
provide direct sum decompositions of the ambient spaces which is called
also {\it pre-Hankel decomposition}. When also $\Pi^{(n)}_X$ (which 
operator commutes with  $\Pi^{(s)}_K$, 
for all $s$) is acting, the obtained
pre-Hankel spaces are denoted by 
$\mathbf{P\Xi}^{(n,s)}_Q$ resp. $\mathbf{P\Xi}^{(n,s)}_{\mathbf B}$, where
$n$ still indicates the degree of the involved homogeneous polynomials 
regarding the X-variable.

The Z-Fourier transforms
$\int e^{\mathbf i\langle Z,K\rangle}\mathbf{P\Phi}^{(n,s)}_QdK$ resp. 
$\int e^{\mathbf i\langle Z,K\rangle}\mathbf{P\Phi}^{(n,s)}_{\mathbf B}dK$
define the {\it $s^{th}$-order twisted Hankel spaces} 
$\mathbf{H\Phi}^{(n,s)}_Q$ resp. $\mathbf{H\Phi}^{(n,s)}_{\mathbf B}$, 
which are projected to 
$\mathbf{H\Xi}^{(n,s)}_Q$ resp. $\mathbf{H\Xi}^{(n,s)}_{\mathbf B}$ 
by $\Pi^{(n)}_X$.
The direct sums of these non-complete subspaces define the corresponding
{\it total twisted Hankel spaces}.
They are different from the pre-spaces, 
but also everywhere
dense subspaces of the corresponding straightly defined function spaces.
Thus the closure of these twisted spaces provides again the whole 
straightly defined spaces.

\subsection{Twisted Dirichlet and Z-Neumann functions.}
All above constructions are implemented by using the whole center
$\mathbb R^l$.
In this section twisted functions  
satisfying the Dirichlet or Z-Neumann
condition on the boundary, $\partial M$, of 
ball$\times$ball- type domains, $M$, are explicitly constructed
by the method described in the review of this section. That is, they are
represented by twisted Z-Fourier transforms of Dirac type generalized 
functions concentrated on $\partial M$. Due to this representation,
the eigenfunctions of the exterior operator $\OE$ satisfying given
boundary conditions can explicitly be computed.

In the first step of this process consider a sphere, 
$S_R$, of radius $R$ around the origin of the Euclidean space
$\mathbf R^l$. As it is well known, 
the Dirichlet or Neumann eigenfunctions of the Euclidean 
Laplacian $-\Delta_{\mathbb R^l}$
on the ball $B_R$ bounded by $S_R$ appear as products of  
$s^{th}$-order spherical harmonics $\varphi^{(s)}(K_u)$ with radial 
functions $y_i^{(s)}(\mathbf z)$. For $s=0$, these eigenfunctions
are radial taking $1$ at the origin and having multiplicity $1$. For
$s>0$, the radial functions take $0$ at the origin and the multiplicity,
for fixed $s$ and $i$, is equal to the dimension of the space of
$s^{th}$-order spherical harmonics $\varphi^{(s)}(K_u)$. 
Corresponding to the Dirichlet or Neumann conditions, 
these eigenvalues are denoted by $\lambda_{Di}^{(s)}$
and $\lambda_{Ni}^{(s)}$ respectively. For any fixed $s$ and condition
$D$ or $N$, these infinite sequences satisfy 
$\lambda_i^{(s)}\uparrow\infty$ and, 
except for $0=\lambda_{N1}^{(0)}$, also  
the relation $0<\lambda_{i}^{(s)}$ holds. 

Eigenfunctions corresponding to Dirichlet or Neumann eigenvalues
$\lambda_i^{(s)}=\lambda$ can be represented by the integral formula
\begin{equation}
y_i^{(s)}(\mathbf z)\varphi^{(s)}(Z_u)=\oint_{S_{\sqrt{\lambda} }}
e^{\mathbf i\langle Z,V\rangle}
\varphi^{(s)}(K/\mathbf k)dK_{no},
\end{equation}
where $dK_{no}$ is the normalized 
integral density on the sphere of radius  
$\sqrt{\lambda}$. Apply $-\Delta_{\mathbb R^l}$ on the right
side to see that this function is an eigenfunction of this operator 
with eigenvalue $\lambda_i^{(s)}:=\lambda$, which, because of the 
Hankel transform, must appear as the function being on the left side.
This formula strongly relates to the third version of
the twisted Z-Fourier transforms which is introduced also in 
the review of this section. However, it can be directly applied
there only after decomposing the functions behind the integral sign by
the projections $\Pi^{(r,s)}_{K_u}$.

Before this application, functions
$y_i^{(s)}(\mathbf z):=y(t)$, belonging to an eigenvalue $\lambda$, 
are more explicitly determined as follows. By formulas
\begin{equation}
\Delta_Z=\partial_{t}\partial_{t}+{l-1\over t}\partial_{t}+
{\Delta_S\over t^2},\quad \Delta_S\varphi^{(s)}=-{s(s+l-2)}\varphi^{(s)},
\end{equation}
it satisfies the differential equation
\begin{equation}
y^{\prime\prime}+{l-1\over t}y^\prime +
\{\lambda -{{s(s+l-2)}\over t^2}\}y=0,
\end{equation}
which, after the substitutions $\tau =\sqrt\lambda t$ and $y(t)=z(\tau )$,
becomes
\begin{equation}
z^{\prime\prime}+{l-1\over \tau}z^\prime +
\{1 -{{s(s+l-2)}\over \tau^2}\}z=0.
\end{equation}
That is, function $J(\tau )=\tau^{l/2-1}z(\tau )$ satisfies
the ordinary differential equation
\begin{equation}
J^{\prime\prime}+{1\over \tau}J^\prime +
\{1 -{{(2s+l-2)^2}\over 4\tau^2}\}J=0,
\end{equation}
therefore, it is a bounded Bessel function of order $(s+l/2-1)$. Thus,
except for complex multiplicative constant,
equation $J=J_{s+l/2-1}$ must hold.

In order to find the functions satisfying the boundary conditions,
consider spheres $S_{\lambda (\mathbf x^2)}$ of radius 
$\lambda (\mathbf x^2)$
around the origin of the Z-space. For appropriate functions
$\phi (\mathbf x^2)$ and $\varphi (K_u)$, the twisted Z-Fourier 
transform on the sphere bundle
$S_\lambda$ is defined by: 
\begin{equation}
\label{FourLamb_Z}
\mathcal F_{Qpq\lambda}(\phi\varphi )(X,Z)
=\oint_{S_{\sqrt\lambda}}e^{\mathbf i
\langle Z,K\rangle}\phi (\mathbf x^2)\varphi (K_u)
(\Theta_{Q}^p\overline\Theta^q_{Q})(X,K_u)dK_{no},
\end{equation} 
where $dK_{no}=dK/Vol(\sigma_{\lambda})$ is 
the normalized measure on the sphere
and function $\varphi (K/\mathbf k)\Theta_{Q}^p
\overline\Theta^q_{Q}(X,K/\mathbf k)$
is defined on $S_\lambda$. 
Since no Hankel projections are involved, 
these functions do not satisfy the required
boundary conditions, yet. 
However, by the above arguments we have:

\begin{theorem}
Consider a ball$\times$ball-type domain defined by the Z-balls 
$B_{R(\mathbf x^2)}$ and let $\lambda_i^{(s)}(\mathbf x^2)
:=\lambda(\mathbf x^2)$
be a smooth function defined by the $i^{th}$ eigenvalue in 
the $s^{th}$-order Dirichlet or
Neumann spectra of the Euclidean balls $B_{R(\mathbf x^2)}$. 
Then function (\ref{FourLamb_Z}) defined for
$\lambda =\lambda_i^{(s)}(\mathbf x^2)$, or
\begin{equation}
\label{XFourLamb_Z}
 \oint_{S_{\sqrt{\lambda}}}e^{\mathbf i
 \langle Z,K\rangle}\phi (|X|^2)\pi_K^{(s)}(\beta^{(m)}
 \Pi^{(n)}_X(\Theta_{Q}^p\overline\Theta^q_{Q}))(X,K_u)dK_{no}
 \end{equation}
satisfy the Dirichlet resp. Z-Neumann condition on the domain $M$.  
The restrictions of these functions onto $M$ span 
$\mathbf\Phi^{(n,s)}_Q(M)$
resp. $\mathbf\Xi^{(n,s)}_Q(M)$,  
for any fixed boundary condition.
If functions $\Theta_{Q}^p\overline\Theta^q_{Q}$ are exchanged for the
polynomials
$\prod_{i=1}^{\kappa}z^{p_i}_{K_ui}(X)\overline z^{q_i}_{K_ui}(X)$,
then the above construction provides the Dirichlet-, Z-Neumann-, resp.
mixed-condition-functions spanning
$\mathbf\Phi^{(n,s)}_{\mathbf B}(M)$
resp. $\mathbf\Xi^{(n,s)}_{\mathbf B}(M)$.
\end{theorem}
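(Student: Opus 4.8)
The plan is to reduce the statement to the individual Z-fibers of $M$: over each X-vector the fiber is a Euclidean ball $B_{R(\mathbf x^2)}$ in $\mathbb R^l$, the relevant fiberwise operator being the flat Z-Laplacian $\Delta_Z$ occurring (up to the positive factor $1+\frac14\mathbf x^2$ of $X$ alone) in (\ref{Delta}), and I want to recognize the Z-part of (\ref{XFourLamb_Z}) as the Euclidean Dirichlet/Neumann eigenfunction on that ball made explicit in the Bessel-function computation preceding the theorem. Fix $X$. By part (A) of Theorem \ref{projth}, the projected twist $\pi_K^{(s)}(\beta^{(m)}\Pi_X^{(n)}(\Theta_Q^p\overline\Theta_Q^q))(X,K_u)$ is, as a function of $K_u$, an $s^{th}$-order spherical harmonic whose coefficients depend on $X$; expanding it as $\sum_j c_j(X)\psi^{(s)}_j(K_u)$ in a fixed basis $\{\psi^{(s)}_j\}$ of order-$s$ spherical harmonics, the function (\ref{XFourLamb_Z}) becomes $\phi(|X|^2)\sum_j c_j(X)\oint_{S_{\sqrt\lambda}}e^{\mathbf i\langle Z,K\rangle}\psi^{(s)}_j(K/\mathbf k)\,dK_{no}$. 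By the Hankel transform theorem (applied, exactly as in the preceding two subsections, to the Dirac-type measure on $S_{\sqrt\lambda}$ or to its $L^\infty$-approximation by $L^2_K$ functions), each of these Z-integrals equals a single radial profile $H^{(l)}_s(\cdot)(\mathbf z)$, depending only on $l$, $s$ and $\sqrt\lambda$, times the very same spherical harmonic evaluated at $Z_u$.

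Next I would name that radial profile: applying $-\Delta_Z$ under the integral sign and using $\Delta_Z e^{\mathbf i\langle Z,K\rangle}=-|K|^2e^{\mathbf i\langle Z,K\rangle}$ with $|K|^2=\lambda$ on $S_{\sqrt\lambda}$, the Z-part of (\ref{XFourLamb_Z}) is a bounded eigenfunction of $-\Delta_Z$ of eigenvalue $\lambda$ assembled from $s^{th}$-order spherical harmonics, so by the radial Bessel analysis just before the theorem its radial factor is a constant multiple of the function $y^{(s)}_i(\mathbf z)$ attached to the eigenvalue $\lambda$. Since $\lambda=\lambda^{(s)}_i(\mathbf x^2)$ was chosen to be the $i^{th}$ Dirichlet (resp.\ Neumann) eigenvalue of the Euclidean ball $B_{R(\mathbf x^2)}$, the function $y^{(s)}_i$ vanishes (resp.\ has vanishing radial derivative) at $\mathbf z=R(\mathbf x^2)$; hence (\ref{XFourLamb_Z}) vanishes (resp.\ has vanishing Z-normal derivative) on the Z-sphere bundle $\{\,|Z|=R(|X|^2)\,\}=\partial M$, which is exactly the Dirichlet (resp.\ Z-Neumann) condition on $M$. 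The hypothesis that $\lambda^{(s)}_i(\mathbf x^2)$ is a smooth function of $\mathbf x^2$ makes this fiberwise recipe patch into a genuine function on $M$. The identical computation gives the claim for (\ref{FourLamb_Z}) once its integrand has been replaced by its order-$s$ component $\pi_K^{(s)}(\cdots)$; carrying the extra projection $\Pi^{(n)}_X$ places one in $\mathbf\Xi^{(n,s)}_Q$, omitting it in $\mathbf\Phi^{(n,s)}_Q$.

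For the spanning I would argue fiberwise and then in the X-variable. For each fixed $s$ and each fixed order-$s$ spherical harmonic $\psi^{(s)}(Z_u)$, the family $\{y^{(s)}_i\}_{i\in\mathbb N}$ is, by Sturm--Liouville theory (Fourier--Bessel expansions), a complete orthogonal system in $L^2([0,R(\mathbf x^2)],t^{l-1}dt)$, so on each Z-fiber the constructed functions $L^2$-densely span the $\psi^{(s)}$-isotypic order-$s$ part of $L^2(B_{R(\mathbf x^2)})$. Letting $\phi(|X|^2)$ range over $L^2$ radial X-functions, letting $\beta^{(m)}$ and the admissible exponents with $p+q=n$ vary, and (for the $\mathbf\Xi$-version) applying $\Pi^{(n)}_X$ to the X-polynomial part, one then recovers all of $\mathbf\Phi^{(n,s)}_Q(M)$ resp.\ $\mathbf\Xi^{(n,s)}_Q(M)$ by combining the pre-Hankel direct-sum decomposition of the previous subsections with the bijectivity of the twisted-versus-straight Z-Fourier transform (Theorem \ref{welldef1}) and of the Hankel transform on radial functions; note that although each constructed function obeys the boundary condition, it is only the $L^2$-closure of their span that equals the full space. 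The multipole ($\mathbf B$-indexed) version is formally identical, the sole structural input being that after $\Pi^{(n)}_X$ and $\pi_K^{(s)}$ the twisting polynomial is an order-$s$ spherical harmonic in $K_u$ --- part (C) of Theorem \ref{projth}; the ``mixed'' boundary conditions merely reflect that the different $\mathbf B$-components may be assigned opposite charges (orientations), while each Z-fiber still carries whichever of the two eigenvalue families has been selected.

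I expect the main obstacle to be the spanning claim, not the boundary behaviour. The boundary condition is essentially immediate once the Hankel transform theorem and the Bessel ODE are in hand. Proving that the explicitly constructed boundary-condition functions have $L^2$-dense span equal to the entire space $\mathbf\Phi^{(n,s)}_Q(M)$ (rather than some proper closed subspace) requires: (i) that the pre-Hankel/Hankel decomposition exhausts the relevant function space in the correct $L^2_Z$ sense, which leans on Theorems \ref{welldef1} and \ref{projth}; (ii) fiberwise completeness of the Dirichlet/Neumann eigenbases controlled uniformly enough in $X$ that it assembles into a statement over the bundle, where the X-dependence of $R(\mathbf x^2)$ and of $\lambda^{(s)}_i(\mathbf x^2)$ must be handled (this is precisely where the smoothness hypothesis is used and where eigenvalue crossings would have to be addressed); and (iii) compatibility of these density statements with the twisted-to-straight conversion and its $\epsilon\to0$ limiting procedure from Theorem \ref{welldef1}, so that no density is lost on the singularity set $S_{\mathbf B}$.
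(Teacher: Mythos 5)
Your proposal matches the paper's approach: the paper presents this theorem as a direct corollary of the preceding discussion (the integral representation $y_i^{(s)}(\mathbf z)\varphi^{(s)}(Z_u)=\oint_{S_{\sqrt\lambda}}e^{\mathbf i\langle Z,K\rangle}\varphi^{(s)}(K/\mathbf k)\,dK_{no}$ of Euclidean Dirichlet/Neumann eigenfunctions, the Hankel transform, and the Bessel ODE analysis), and your fiberwise reduction using Theorem \ref{projth}(A) together with those ingredients reproduces exactly that chain of reasoning. Your Fourier--Bessel completeness argument for the spanning claim, leaning on the pre-Hankel decomposition and the bijectivity established in Theorem \ref{welldef1}, fills in a step the paper leaves implicit, and the caveats you raise about smoothness of $\lambda_i^{(s)}(\mathbf x^2)$ and possible eigenvalue crossings are genuine concerns the text does not address.
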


\subsection{Constructing the orbital and inner force operators.}
The complicated action of the compound angular momentum operator
$\mathbf M_{Z}$ on twisted Hankel functions is due to fact that  
its Fourier transform, $\mathbf iD_K\bullet$, is non-commuting
with the Hankel projections, that is, the commutator on the
right side of 
$D_K\bullet \Pi_{K_u}^{\alpha} =\Pi_{K_u}^{\alpha} D_K\bullet +
[D_K\bullet \Pi_{K_u}^{\alpha}]$ is non-vanishing in general.
Thus there are two non-trivial terms on the right side of equation 
\begin{eqnarray}
\mathbf M_{Z}\int e^{\mathbf i\langle Z,K\rangle}
f_{\alpha}(\mathbf x,\mathbf k)
\Pi_{K_u}^{\alpha} (F^{(p_i,q_i)}(X,K_u))dK=
\\
=\mathbf i\int e^{\mathbf i\langle Z,K\rangle}
f_{\alpha}(\mathbf x,\mathbf k)D_K\bullet
\Pi_{K_u}^{\alpha} (F^{(p_i,q_i)}(X,K_u))dK=
\nonumber
\\
=\mathbf i\int e^{\mathbf i\langle Z,K\rangle}
f_{\alpha}(\mathbf x,\mathbf k) (\Pi_{K_u}^{\alpha} D_K\bullet +
[D_K\bullet ,\Pi_{K_u}^{\alpha}]) (F^{(p_i,q_i)}(X,K_u))dK,
\nonumber
\end{eqnarray}
by which the orbital: 
\begin{eqnarray}
\mathbf{L}_{Z}\int e^{\mathbf i\langle Z,K\rangle}
f_{\alpha}(\mathbf x,\mathbf k)
\Pi_{K_u}^{\alpha} (F^{(p_i,q_i)}(X,K_u))dK=
\\
=\mathbf i\int e^{\mathbf i\langle Z,K\rangle}
f_{\alpha}(\mathbf x,\mathbf k) \Pi_{K_u}^{\alpha} D_K\bullet 
(F^{(p_i,q_i)}(X,K_u))dK
\nonumber
\end{eqnarray}
and the intrinsic spin operators:
\begin{eqnarray}
\mathbf{S}_{Z}\int e^{\mathbf i\langle Z,K\rangle}
f_{\alpha}(\mathbf x,\mathbf k)
\Pi_{K_u}^{\alpha} (F^{(p_i,q_i)}(X,K_u))dK=
\\
=\mathbf i\int e^{\mathbf i\langle Z,K\rangle}
f_{\alpha}(\mathbf x,\mathbf k) [D_K\bullet ,\Pi_{K_u}^{\alpha}] 
(F^{(p_i,q_i)}(X,K_u))dK
\nonumber
\end{eqnarray}
are defined, respectively. 

The commutator appears in the following explicit form
\begin{equation}
\label{[DPI]}
[D_K\bullet ,\Pi_{K_u}^{\alpha}]=
S_{\beta}^\alpha\Pi_{K_u}^{\beta}\mathbf M_{K^\perp_u}=
\mathbf M_{K^\perp_u}S_{\beta}^\alpha\Pi_{K_u}^{\beta},
\end{equation}
where 
$\mathbf M_{K^\perp_u}=\mathbf M_{K}-\partial_{\mathbf k}D_{K_u}\bullet$
is the Z-spherical angular momentum operator defined on Z-spheres. (For
a fixed unit vector $K_u$ and orthonormal basis $\{e_\alpha ,K_u\}$ 
(where $\alpha =1,2,\dots ,l-1$) of K-vectors, this operator is of the 
form $\mathbf M_{K^\perp_u}=\sum_\alpha\partial_{\alpha}D_{\alpha}\bullet$.
)
Formula (\ref{[DPI]}) follows by applying relations 
$D_{K}\bullet\Delta_{K_u}=\Delta_{K_u}D_{K}\bullet -2\mathbf M_{K^\perp_u}$
and the commutativity of $\Delta_{K_u}$ with operators
$\partial_{\alpha}, \partial_{K_u}, D_{\alpha}\bullet$, and 
$D_{K_u}\bullet$ in order to evaluate
\[
D_{K}\bullet\Pi_{K_u}^{(r,r-2i)}=D_{K}\bullet\tilde
D_{(r,r-2i)}\Pi_K^{(r-2i,r-2i)}\Delta_{K_u}^i,
\]
where, according to the denotations introduced above, $r=v+a$ holds.
This computation results the equation 
$[D_{K}\bullet,\Pi_{K_u}^{(r,r-2i)}]=P_{i+1}(\Delta_{K_u})
\mathbf M_{K^\perp_u}$,
where the lowest exponent of $\Delta_{K_u}$ in the polynomial
$P_{i+1}(\Delta_{K_u})$ is $i+1$. 
This term defines a uniquely
determined constant times of projection $\Pi_{K_u}^{(r,r-2(i+1))}$
such that 
$P_{i+1}(\Delta_{K_u})=A_{i+1}\Pi_{K_u}^{(r,r-2(i+1))}+
P_{i+2}(\Delta_{K_u})$ holds,
where the lowest exponent of $\Delta_{K_u}$ in the polynomial
$P_{i+2}(\Delta_{K_u})$ is $i+2$. 
In the next step, the above arguments are repeated regarding  $P_{i+2}$
to obtain $P_{i+3}$. This process can be concluded in finite many steps
which establish the above formula completely.

Formula (\ref{[DPI]}) allows to define an {\it inner algorithm} where these
computations are iterated infinitely many times. 
In the second step it is repeated for
$f^{(2)}_\beta :=S_{\beta}^\alpha f^{}_\alpha$ as follows. 
First note that operator 
$
f_\alpha S_{\beta}^\alpha\Pi_{K_u}^{\beta}\mathbf M_{K^\perp_u}=
\mathbf M_{K^\perp_u} \Pi_{K_u}^{\beta} S_{\beta}^\alpha f_\alpha
$
can be decomposed in the following form:
\begin{eqnarray}
\label{inop}
f_{\beta}^{(2)}\Pi_{K_u}^{\beta}\mathbf M_{K^\perp_u}=
-\partial_{K_u}(f_\beta^{(2)})\Pi_{K_u}^{\beta}
D_{K_u}\bullet +
\mathbf M_{K}\Pi_{K_u}^{\beta}f_{\beta}^{(2)} .
\end{eqnarray}
The action of the first operator on functions 
\begin{eqnarray}
F^{(p,q)}(X,K_u)=\varphi_\zeta^{(r)}(K_u)
\Theta_{Q}^p(X,K_u)\overline\Theta^q_{Q}(X,K_u)
\\
{\rm resp.}\quad
F^{(p,q)}(X,K_u)=\varphi_\zeta^{(r)}(K_u)
\prod_{i=1}^{k/2}z^{p_i}_{K_ui}\overline z^{q_i}_{K_ui},
\end{eqnarray}
where $\varphi_\zeta^{(r)}(K_u)$ denotes an $r^{th}$-order homogeneous
harmonic polynomial introduced at explicit description of Hankel 
projections, results
\[
-(p-q)\mathbf i \partial_{K_u}(f_\alpha S_{\beta}^\alpha )
\Pi_{K_u}^{\beta}(F^{(p,q)}).
\]
This term is already in finalized form which does not alter
during further computations. Together with the orbiting operator, they
define, in terms of the radial operator 
\begin{equation}
\bigcirc^{(1)}_\alpha (f_{\beta_1},\dots ,f_{\beta_d})=
-(p-q)\mathbf i({\mathbf k}f_\alpha + 
\partial_{K_u}(f_\beta S_{\alpha}^\beta )),
\end{equation}
the one-turn operator
\begin{eqnarray}
\mathbf M^{(1)}_{Z}\int e^{\mathbf i\langle Z,K\rangle}
f_{\alpha}(\mathbf x,\mathbf k)
\Pi_{K_u}^{\alpha} (F^{(p_i,q_i)}(X,K_u))dK=
\\
=\int e^{\mathbf i\langle Z,K\rangle}\bigcirc^{(1)}_\alpha
(f_{\beta_1},\dots ,f_{\beta_d})
 \Pi_{K_u}^{\alpha} (F^{(p_i,q_i)}(X,K_u))dK,
\nonumber
\end{eqnarray}
where the name indicates that it is expressed in terms of the first 
power of the inner spin operator $S_{\alpha}^\beta$ permutating the Hankel
radial functions. 

Be aware of the novelty of this spin-concept emerging in
these formulas! It defines just permutation of the radial Hankel
functions to which no actual spinning of the particles can be corresponded.
This concept certainly does not lead to a dead-end-theory like those
pursued in 
classical quantum theory where one tried to explain the inner spin of 
electron by actual spinning.

This abstract merry-go-round does not stop after making one turn.
It is actually the second operator,
$\mathbf M_{K}\Pi_{K_u}^{\beta}f_\beta^{(2)}$,
in (\ref{inop}) which generates the indicated process   
where the above arguments are repeated for functions 
$f^{(2)}_\beta :=S_{\beta}^\alpha f_\alpha$. 
Index $2$ indicates that these 
functions are obtained from the starting functions  
$f^{(1)}_\alpha :=f_\alpha$ by the next step. The details are as follows.

In these computations the second operator is derived by the Hankel
transform turning functions defined on the $\tilde K$-space to functions
defined on the $K$-space. Operator $H^{(-s_\beta)}$ denotes the inverse
of the Hankel transform $H_{s_\beta}^{(l)}$,
where $s_\beta$ denotes the third index in $\beta =(v,a,s)$. Then we have:
\begin{eqnarray}
\mathbf M_{K}f_\beta^{(2)} 
\Pi_{K_u}^{\beta}F^{(p,q)}(X,K_u)=
\\
=\mathbf M_{K}\int e^{\mathbf i\langle K,\tilde K\rangle}
H^{(-s_\beta)}(f^{(2)}_\beta )(\mathbf x,\tilde{\mathbf k})
\Pi_{\tilde K_u}^{\beta}
F^{(p,q)}(X,\tilde K_u)d\tilde K=
\nonumber
\\
=\mathbf i\int e^{\mathbf i\langle K,\tilde K\rangle}
 \tilde f^{(2)}_{\beta} (\mathbf x,
\tilde{\mathbf k})D_{\tilde K}\bullet
\Pi_{\tilde K_u}^{\beta}F^{(p,q)}(X,\tilde K_u)d\tilde K,
\nonumber
\end{eqnarray}
where $\tilde f^{(2)}_{\beta} =H^{(-s_\beta)}(f^{(2)}_\beta ) $.
At this step commutator $[D_{\tilde K}\bullet ,\Pi_{\tilde K_u}^{\beta}]$
can be calculated by (\ref{[DPI]}), resulting functions 
$f^{(3)}_\beta :=S_{\beta}^\alpha\tilde f^{(2)}_{\alpha}$ 
which are subjected to the operations performed in the following step 3. 
These steps must infinitely many times be iterated.

Regarding radial functions $\tilde f^{(2)}_\alpha$, 
which are defined on the $\tilde K$-space,
the orbiting spin and the one-turn operator can be defined in the same way 
as they are defined on the  $K$-space. 
After performing the Z-Fourier and the associated Hankel transforms 
on these finalized functions they
become functions defined on the $K$-space. By adding these
terms to those obtained in the first step, one defines the two-turn
operator $\mathbf M^{(2)}_{Z}$ associated with 
$\bigcirc^{(2)}_\alpha (f_{\beta_1},\dots ,f_{\beta_d})$,
called two-turn merry-go-round and roulette operators respectively.
The sum of the orbiting spin operators defines $\mathbf L^{(2)}$, which
is called one-turn orbiting spin operator involving just $S^\alpha_\beta$
into its definition. 
One should keep in mind that these operators involve also the operators 
defined in the previous step. These computations work out for an arbitrary 
$u^{th}$-step defining operators  
$\mathbf M^{(u)}_{Z}$, 
$\bigcirc^{(u)}_\alpha (f_{\beta_1},\dots ,f_{\beta_d})$, and 
$\mathbf L^{(u)}$ which are called u-turn merry-go-round-, roulette-,
and $(u-1)$-turn orbiting spin operator respectively.
In the end, the action of 
$\mathbf M_{Z}=\mathbf M^{(\infty )}_{Z}$ can be described
in the form:
\begin{eqnarray}
\mathbf M_{Z}\int e^{\mathbf i\langle Z,K\rangle}
f_{\alpha}(\mathbf x,\mathbf k)
\Pi_{K_u}^{\alpha} (F^{(p_i,q_i)}(X,K_u))dK=
\\
=\int e^{\mathbf i\langle Z,K\rangle}\bigcirc_\alpha
(f_{\beta_1},\dots ,f_{\beta_d})
 \Pi_{K_u}^{\alpha} (F^{(p_i,q_i)}(X,K_u))dK,
\nonumber
\end{eqnarray}
where operator 
$\bigcirc_\alpha (f_{\beta_1},\dots ,f_{\beta_d})=
\bigcirc^{(\infty )}_\alpha (f_{\beta_1},\dots ,f_{\beta_d})$,
called high roulette operator, is defined
by the infinite series  $\lim_{u\to\infty}\bigcirc^{(u)}_\alpha$.
In this sense, the complete angular momentum operator $\mathbf M_{Z}$
can be called high merry-go-round operator. 
The point in this formula is that this action can be described
in terms of d-tuples, $(f_{\beta_1},\dots ,f_{\beta_d})$, of radial
functions which can not be reduced to a single function defined by a 
fixed $\alpha$.
  
This statement holds also for the total Laplacian $\Delta$, thus the 
eigenfunctions satisfying the Dirichlet or Z-Neumann  
conditions on the considered manifolds can also be completely described
in terms of radial functions. But the corresponding equations involve
all radial functions $(f_{\beta_1},\dots ,f_{\beta_d})$ defined for
all indices $\beta_i$. By this reason, operator $\bigcirc_\alpha$
plays the role of a confider, giving rise to the effect that the
eigenfunctions satisfying a given boundary condition
are expressed in terms of radial functions which do not 
satisfy these conditions individually but exhibit them
together, confined in a complicated combination. Neither
can these eigenfunctions be observed as single $\OE$-force potential 
functions. On the other hand, for a given boundary condition, 
the $\OE$-eigenfunctions span the whole $L^2$-space, therefore,
$\Delta$-eigenfunctions satisfying the same boundary condition
can be expressed as infinite linear combinations of $\OE$-eigenfunctions.
This observation further supports the idea that the rather
strong $\Delta$-forces are built up by the much weaker $\OE$-forces in
a way how Hawking describes the action of the Weinberg-Salam weak force 
in \cite{h}, pages 79:

``The Weinberg-Salam theory known as spontaneous symmetry breaking.
This means that what appear to be a number of completely different
particles at low energies are in fact found to be all the same type of 
particle, only in different states. At high energies all these
particles behave similarly. The effect is rather like the behavior
of a roulette ball on a roulette wheel. At high energies (when the wheel
is spun quickly) the ball behaves in essentially only one way -- it rolls
round and round. But as the wheel slows, the energy of the ball decreases,
and eventually the ball drops into one of the thirty-seven slots 
in the wheel. In other words, at low energies there are thirty seven 
different states in which the ball can exist. If, for some reason, 
we could only observe the ball at low energies, we would then think
that there were thirty-seven different types of ball!"

This quotation explains the origin of the name given to the
roulette operators. This polarized operators are derived from the
non-polarized merry-go-round operators, which name was suggested to me
by Weinberg's book \cite{w1} where the name ``merry-go-round" appears 
in a different situation not discussed here. 
All these arguments clearly suggest
that the $\Delta$-eigenfunctions must correspond to the strong forces
keeping the quarks together. A formula expressing these eigenfunctions
as linear combinations of weak force potential functions would shed light
to the magnitude of the strong force piled by the roulette operator during 
building up these eigenfunctions.  

The explicit eigenfunction computations give
rise to difficult mathematical problems which are completely open
at this point of the developments. In this paper we explicitly describe
only the eigenfunctions of the exterior orbiting operator $\OE$ defined
by omitting the interior spin operator from the $\Delta$. This operator is 
a scalar operator whose action can be reduced to a single radial function
$f$. By the arguments developed in the introductions and also at several
points in the body of this paper, this operator
is associated with the weak force interaction. 
These computations are established in the next section. 
This section is concluded by saying more about the recovery of several 
concepts of the standard model within this new theory.

Suppose that function
\begin{eqnarray}
\psi (X,Z)=\int e^{\mathbf i\langle Z,K\rangle}
f_{\alpha}(\mathbf x,\mathbf k)
\Pi_{K_u}^{\alpha} (F^{(p_i,q_i)}(X,K_u))dK
\end{eqnarray}
is an eigenfunction of the complete Laplacian $\Delta$. It is called
also probability amplitude of the particle system. 
Also in this formula the Einstein
convention indicates summation. A fixed index $\alpha =(v,a,s)$
is called slot-index and function
\begin{eqnarray}
\mathcal Q_{vas} (X,Z)=\int e^{\mathbf i\langle Z,K\rangle}
f_{vas}(\mathbf x,\mathbf k)
\Pi_{K_u}^{(vas)} (F^{(p_i,q_i)}(X,K_u))dK
\end{eqnarray}
is the so called high slot probability amplitude. These exact
mathematical objects correspond
to quarks whose flavor is associated with index $v$ and its color is 
associated with index $a$. Index $s$ is called azimuthal
index. Note that also these indices have mathematical meanings, namely,
they refer to the degrees of the corresponding polynomials by which
the formula of a slot-amplitude is built up. 
According to these definitions, 
a particle-system-amplitude is the sum of the high slot probability 
amplitudes
which are considered to be the mathematical manifestations of quarks. 
Slot amplitudes defined in the same way by the $\OE$-eigenfunctions are 
called retired or laid-off slot probability amplitudes. 

It is very interesting to see how these 
constituents of a high energy particle are held together by the 
strong force interaction. 
When $D_K\bullet$ is acting on functions $\langle Q,X\rangle$ resp.
$\langle J_K(Q),X\rangle$ of a quark, then the first one becomes of the
second type and the second one becomes of the first type. In either
cases the color index, $a$, changes by $1$ and a slot-amplitude of odd
color index becomes an amplitude of even color index. This process can be
interpreted as gluon exchange in the following way.
Action of $D_K\bullet$ on $\langle Q,X\rangle$ resp.
$\langle J_K(Q),X\rangle$ are interpreted
as gluon absorption resp. emission. More precisely, some of the 
slot-particles (quarks)
of odd color index emit gluon which is absorbed by some of the
slot-particles (quarks)
which also have odd color index. As a result they become quarks
of even color index. The same process yields also for quarks having
even color index, which, after gluon exchange, become quarks of odd
color index. This is a rather clear explanation for the flavor-blindness
and color sensitiveness of gluons. 

It also explains why a high slot-particle (quark) can 
never retire  
to become a laid-off $\OE$-particle. Indeed, in a high 
slot-state defined for fixed slot index $\alpha =(v,a,s)$ the 
corresponding Hankel function does not satisfy the boundary conditions,
however, it can be expanded by the $\OE$-eigenfunctions. 
At this point nothing
is known about the number of $\OE$-eigenfunctions by which a 
high slot-amplitude can be expressed. This number can very well be equal 
to the infinity, but it is always greater then one. 
Then, instead of consisting of a single term, the high slot 
probability density is a multi-term sum of weak 
densities determined by these laid off probability amplitudes.    

Real positive function $\psi\overline\psi$ whose integral on the whole
ball$\times$ball-type domain is $1$ is called probability density. Protons
are defined by functions $\mathit{ch}(Z_u)(\psi_c\overline\psi_c)(Z,X)$,
where $\psi_c$ is a constant times of $\psi$, whose integral on the 
ball$\times$ball-type domain is $1$. If this integral is $0$ or $-1$,
it is called neutron or antiproton respectively. (The mass can be defined
by means of $|\det (A_{ij}(Z_u))|$, but we do not go into these details
here.) This argument shows
that in a complete eigensubspace decomposition of the $L^2$ function
space of a two-step nilpotent Lie group representing a particle system
the eigenfunctions, actually, represent all kind of particles and not just
particular ones. This phenomenon can be considered as a clear 
manifestation of the bootstrap principle, from which super string 
theory grew out, also in the new theory. In super string theory 
the notion was (cf. \cite{g}, pages 128)  
that ``a set of elementary particles could be treated as if
composed in a self-consistent manner of combinations of those same
particles. All the particles would serve as constituents, all the 
particles (even the fermions in a certain sense) would serve as quanta
for force fields binding the constituents together, and all the particles
would appear as bound states of the constituents".

\subsection{$\OE$-forces in the union of 3 fundamental forces.}
 
The eigenfunction computations of $\OE$ on the twisted function space
$\Xi_{.R}^{(n)}$ satisfying
a given boundary condition can be reduced to the same radial 
differential operator what was obtained
for the standard Ginsburg-Landau-Zeeman
operator on Z-crystals. To see this statement, 
let the complete Laplacian (\ref{Delta}) act on (\ref{XFourLamb_Z}).
By the commutativity relation $\mathbf M_Z\oint =\oint \mathbf iD_K\bullet$
and $\Delta_Z\oint =-\oint \mathbf k^2$,
this action is a combination of
X-radial differentiation, $\partial_{\mathbf x}$, 
and multiplications with functions depending 
just on $\mathbf x$, that is, it is completely reduced to X-radial
functions. More precisely,  
\begin{equation}
\Delta (\mathcal {HF}_{QpqR}(\phi\beta ))
=\oint_{S_{R}}e^{\mathbf i
\langle Z,K\rangle}\Diamond_{R,\mathbf x^2}(\phi )\beta
\Pi^{(n)}_X(\Theta_{Q}^p\overline\Theta^q_{Q})dK_{no}
\end{equation}
holds, where
\begin{eqnarray}
 (\Diamond_{R,\mathbf x^2}\phi ) (\mathbf x^2)
=4\mathbf x^2\phi^{\prime\prime}(\mathbf x^2)
+(2k+4(p+q))\phi^{\prime}(\mathbf x^2)+\\
-(R^2(1+{1\over 4}\mathbf x^2) +(p-q)R)
\phi (\mathbf x^2),
\nonumber
\end{eqnarray}
and $\phi^\prime\,\, , \phi^{\prime\prime}$ mean the corresponding 
derivatives of $\phi (\tilde t)$ with respect to the $\tilde t$ variable.
The eigenfunctions of $\Delta$ can be found by seeking
the eigenfunctions of the reduced operator
$\Diamond_{R,\mathbf x^2}$ among the X-radial functions.

Note that no projections $\Pi^{(s)}_K$ are applied in the above
integral formula, thus these eigenfunctions do not satisfy the
boundary conditions in general. These projections, however, do
not commute with $D_K\bullet$, and the eigenfunctions of the
complete operator $\Delta$ can not be expressed in terms of a single 
function $\phi (\mathbf x^2)$. This simple
reduction applies just to the exterior operator, 
$\OE$, defined by neglecting the anomalous intrinsic momentum operator 
$\mathbf{S}_Z$ from $\Delta$ and keeping just the orbital 
(alias, exterior)
spin operator $\mathbf{L}_Z$. Regarding this operator we have:

\begin{theorem}
The exterior operator $\OE$, 
on constant radius Z-ball bundles 
reduces to a radial operator appearing in terms of the 
Dirichlet-, Neumann-, resp. mixed-condition-eigenvalues 
$\lambda_i^{(s)}$ of the 
Z-ball $B_Z(R)$ in the form 
\begin{equation}
\label{BLf_mu}
(\Diamond_{\lambda ,\tilde t}f)(\tilde t)=
4\tilde tf^{\prime\prime}(\tilde t)
+(2k+4n)f^\prime (\tilde t)
-(2m\sqrt{\lambda_i^{(s)}\over 4} +4{\lambda_i^{(s)}\over 4} (1 +
{1\over 4}\tilde t))f(\tilde t).
\end{equation}
By the substitution 
$\lambda=\sqrt{\lambda_i^{(s)}/4}$, this is exactly
the radial Ginsburg-Landau-Zeeman operator 
(\ref{Lf_lambda}) obtained on Z-crystal models. 

Despite this formal identity with electromagnetic forces, the nuclear
forces represented by $\OE$ manifest themself quite differently.
Like for the  electromagnetic forces, one can introduce both charged
and neutral particles also regarding $\OE$. A major 
difference between the two particle-systems is that the particles 
represented by $\OE$ are extended ones. 
This property can be seen, for instance, by the eigenfunctions of $\Delta$ 
which involve also Z-spherical harmonics.
Due to these harmonics, the multiplicities of the eigenvalues regarding
$\OE$ are higher than what is corresponded to the same eigenvalue 
regarding the Ginsburg-Landau-Zeeman operator on Z-crystals.
An other consequence of the extension is that the  
$\OE$-neutrinos always have positive mass, which is zero for neutrinos 
associated with electromagnetic forces.
 
Actually, the nuclear forces represented by $\OE$ are weaker than the
electromagnetic forces. This phenomenon can be explained, by the extension,
by the very same argument of classical electrodynamics 
asserting that the electromagnetic self-mass for a surface 
distribution of
charge with radius $a$ is $e^2/6\pi ac^2$, which, therefore, blows up
for $a\to 0$. Note that, in the history of quantum theory, 
this argument provided the first warning that a point
electron will have infinite electromagnetic self-mass. It is well known
that this problem appeared with even much greater severity in
the problem of infinities invading quantum field theory. The tool
by which these severe problems had been handled is renormalization, which
turned QED into a renormalizable theory. The above argument
clearly suggests that also the $\OE$-theory must be renormalizable.

The major difference between the $\OE$- and the complete $\Delta$-theory
is that the first one is a scalar theory, which can be reduced to a radial
operator acting on a single radial function, while the reduced operator
obtained in the $\Delta$-theory acts on d-tuples of radial functions.
As it is described above, this action defines also a new type of 
nuclear inner spin of the extended particles to which new particles 
such as quarks and gluons can be associated and by which strong nuclear
forces keeping the parts of the nucleus together can be introduced.
Partial operator $\OE$ is the maximal scalar operator in $\Delta$.
Except for $\OE=\OE^{(1)}$, partial operators $\OE^{(u)}$ 
(defined by replacing $\mathbf M_Z$ 
by $\mathbf L_Z^{(u)}$) and $\Delta^{(u)}$ 
(defined by replacing $\mathbf M_Z$ by $\mathbf M_Z^{(u)}$) can 
be reduced just to operators which irreducibly act on d-tuples of radial
functions.

The main unifying principle among the three fundamental forces is
that they are derived from the very same Hamilton operator, $\Delta$.
More precisely, the Hamilton operators of the individual elementary 
particles emerge on corresponding invariant subspaces of $\Delta$ such 
that it is restricted to the subspace corresponded to the given
elementary particles. The corresponding forces are defined by those
acting among these particles. The electromagnetic forces manifest 
themself on function spaces consisting functions which are periodic 
regarding a Z-lattice $\Gamma_Z$. The systems of particles defined on 
these function spaces are without interior. They
consist of particles such as electrons, positrons, and 
electron-positron-neutrinos. 
The various nuclear forces appear on function spaces defined on Z-ball
and Z-sphere bundles by fixed boundary conditions.
The attached particles, which do have interior, and the forces acting
among them are discussed above. 

The function spaces corresponded to 
particular particles are constructed with the corresponding twisted
Z-Fourier transforms. Since the eigenfunctions of the Hamilton operators
can be sought in this form, this transform seems to be the only natural
tool for assigning the invariant function spaces corresponding to 
elementary particles. It is also remarkable that the twisted Z-Fourier 
transforms emerge also in the natural generalization of the de Broglie
waves fitting the new theory.   

A particle-system defined by a fixed
function space consists of all kind of particles which can be defined
by the characteristic property of the given function space. For
instance, all particles without interior appear 
on a $\Gamma_Z$-periodic function spaces. In order to avoid the 
annihilation of particles by antiparticles, the particles belonging
to the same system are considered to be not interacting with each other.
According to this argument, the particles in a system defined by an 
invariant subspace are gregarious which are always accompanied with
other particles. For instance, an electron is always partying with an
electron-neutrino. This complexity of the particle-systems is reflected
by the Laplacian (Hamiltonian) which appears as the sum of Hamiltonians
of particles partying in a system. This phenomenon is a relative of
those described by the bootstrap principle of super string theory.
 
Let it be mentioned yet that the distinct
function spaces belonging to distinct type of forces are not
independent, thus there is the possibility to work out an interaction
theory between particles of distinct types. The existence of such a
viable theory is a completely open question in this field.

\end{theorem}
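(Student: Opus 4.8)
The essential content of the statement is the radial reduction of $\OE$; the remaining assertions are interpretive (the force-strength and renormalizability remarks being the heuristic self-mass estimate $e^2/6\pi ac^2$ of classical electrodynamics, and the claims about $\OE^{(u)}$, $\Delta^{(u)}$ and the unifying principle being immediate from the slot-decomposition and the definitions of the two preceding subsections). So the plan is to establish the form (\ref{BLf_mu}) and its identification with (\ref{Lf_lambda}), and then to read off the multiplicity and positive-$\OE$-neutrino-mass statements as corollaries.

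First I would invoke the twisted Dirichlet/Z-Neumann construction established above: on a \emph{constant} radius Z-ball bundle $M$ the functions (\ref{XFourLamb_Z}), taken with $\lambda=\lambda_i^{(s)}$ the $i$-th Dirichlet, Neumann, or mixed eigenvalue of the Euclidean Z-ball $B_Z(R)$ and with $\phi$ running over X-radial functions, span the whole boundary-condition space $\mathbf\Xi^{(n,s)}_Q(M)$ (resp. $\mathbf\Xi^{(n,s)}_{\mathbf B}(M)$). Hence it suffices to apply $\OE$ to one such function and to check that the result is again of this form with $\phi$ replaced by an ordinary-differential image of $\phi$. The decisive point is that $\OE=\Delta_X+(1+\tfrac14\mathbf x^2)\Delta_Z+\mathbf L$ and that, by the very definition of the orbital operator $\mathbf L$, it commutes through the Hankel projection $\Pi^{(s)}_{K_u}$; thus $\mathbf L$ applied to (\ref{XFourLamb_Z}) equals $\mathbf i\oint\Pi^{(s)}_{K_u}D_K\bullet$ applied to the un-projected twisted integrand. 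Over the sphere $S_{\sqrt\lambda}$ the length $\mathbf k=\sqrt\lambda$ is constant; $D_K\bullet$ differentiates only in the X-variable, and $\Pi^{(n)}_X(\Theta^p_Q\overline\Theta^q_Q)$ is an eigenfunction of $\mathbf iD_K\bullet$ with eigenvalue $m\mathbf k$ (the projection $\Pi^{(n)}_X$, being a polynomial in $\mathbf x^2\Delta_X$, commutes with $D_K\bullet$ because the latter generates an orthogonal flow on $\mathcal X$). Hence $\mathbf L$ reproduces the integrand up to the scalar $m\sqrt\lambda$; $\Delta_Z$ brings down $-\mathbf k^2=-\lambda$; and $\Delta_X$, applied to $\phi(\mathbf x^2)$ times a fixed $n$-th order X-harmonic polynomial, produces $4\mathbf x^2\phi''+(2k+4n)\phi'$ by the standard identity for the flat Laplacian. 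For constant radius the $\mathbf x$-dependence of $\lambda_i^{(s)}$ drops out; otherwise it would contribute the extra radial terms of the more general operator (\ref{Lf_mu(x)}).

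Assembling the three contributions exactly as in the reduction of $\Delta$ on $\mathcal{HF}_{QpqR}(\phi\beta)$ obtained in the discussion preceding the statement --- but now with its parameter $R$ replaced by $\sqrt{\lambda_i^{(s)}}$ and with the harmless extra factor $\Pi^{(s)}_{K_u}$ carried along unchanged --- I would conclude that $\OE$ applied to (\ref{XFourLamb_Z}) equals the same integral with $\phi$ replaced by $\Diamond_{\lambda,\tilde t}\phi$, where $\Diamond_{\lambda,\tilde t}$ is precisely the operator (\ref{BLf_mu}). So the $\OE$-eigenvalue equation on each of these boundary-condition spaces is equivalent to the eigenvalue equation of the ordinary differential operator (\ref{BLf_mu}) on X-radial $f$, and the substitution $\lambda=\sqrt{\lambda_i^{(s)}/4}$ --- the factor $\tfrac14$ absorbing the $2\pi$ of the Z-crystal exponent $e^{2\pi\mathbf i\langle Z_\gamma,Z\rangle}$, whose square produces the constant $4\mu^2$ there --- turns (\ref{BLf_mu}) verbatim into the radial Ginsburg-Landau-Zeeman operator (\ref{Lf_lambda}). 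The higher eigenvalue multiplicities then follow because for fixed $(n,m,s,i)$ the Z-spherical harmonics $\varphi^{(s)}(K_u)$ contribute an extra degeneracy absent on Z-crystals, and the positivity of the $\OE$-neutrino mass follows from $\lambda_i^{(s)}>0$ (whereas a Z-lattice contains the zero vector, giving a massless component there).

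The step I expect to be the real obstacle is the second one: proving carefully that $\mathbf L$ is genuinely scalar on the Hankel slots, i.e. that it preserves each compound index $\alpha=(v,a,s)$ and acts there as multiplication by a radial function. This is exactly where $\OE$ parts company with $\Delta$: the commutator $[D_K\bullet,\Pi^{(s)}_{K_u}]$ is nonzero and equals the intrinsic operator $\mathbf S$, which is dropped in $\OE$; keeping it forces the roulette-operator iteration of the previous subsection and destroys any single-$\phi$ reduction. One must also check that the Dirichlet, Neumann, and mixed boundary conditions are handled uniformly, which I would do via the Bessel-function normal form $J=J_{s+l/2-1}$ of the radial factors $y_i^{(s)}$ derived above, valid for all three conditions simultaneously.
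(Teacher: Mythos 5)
Your proposal follows essentially the same route the paper takes: let $\Delta$ act on the twisted Z-Fourier integral over the sphere $S_{\sqrt{\lambda_i^{(s)}}}$, use $\mathbf M_Z\oint=\oint\mathbf i D_K\bullet$ and $\Delta_Z\oint=-\oint\mathbf k^2$ to reduce everything to the radial operator $\Diamond_{R,\mathbf x^2}$, and then observe that inserting the Hankel projection $\Pi^{(s)}_{K_u}$ is compatible only with the orbital part $\mathbf L$, which is what defines $\OE$. You also correctly flag the genuine crux: for the full $\Delta$ the commutator $[D_K\bullet,\Pi^{(s)}_{K_u}]$ is nonvanishing and is precisely $\mathbf S$, so the single-radial-function reduction fails; the paper makes exactly this observation in the paragraph preceding the theorem.

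One small imprecision: the parenthetical about the $\tfrac14$ ``absorbing the $2\pi$ of the Z-crystal exponent $\dots$ whose square produces the constant $4\mu^2$'' is garbled. The $\tfrac14$ under the square root is just the relation $\mu=R/2$ (equivalently $\mu^2=R^2/4$). One can see it directly by matching the coefficients $4\mu^2=R^2$ (from the $\Delta_Z$-contribution, where the $\tfrac14$ already sits in $\Delta=\Delta_X+(1+\tfrac14\mathbf x^2)\Delta_Z+\dots$) and $2m\mu=mR$ (from the angular-momentum term, where the factor $2$ is present in $\lhd_\mu=\Delta_X+2\mathbf i D_\mu\bullet-\mu^2\mathbf x^2-4\mu^2$ but not in $\mathbf M_Z=\sum\partial_\alpha D_\alpha\bullet$). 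On the Z-crystal this amounts to the normalization $\mu=\pi\mathbf z_\gamma$ versus $\mathbf k=2\pi\mathbf z_\gamma$, i.e.\ $\mathbf k=2\mu$; no squaring of $2\pi$ is involved. This does not affect the validity of the reduction.
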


The $\OE$-forces are very similar to the weak nuclear forces 
described in the Weinberg-Salam theory. Weinberg introduces these
forces on pages 116-120 of his popular book \cite{w1} as follows:

`` The weak nuclear force first turned up in the discovery of 
radioactivity by Henri Becquerel in 1896. In the 1930's it become
understood that in the particular kind of radioactivity that was 
discovered by Becquerel , known as beta decay, the weak nuclear force
causes a neutron inside the nucleus to turn into a proton, at the same
time creating an electron and another particle known today as 
antineutrino, and spitting them out of the nucleus. This is something 
that is not allowed to happen through any other kind of force. The
strong nuclear force that holds the protons and neutrons together
in the nucleus and the electromagnetic force that tries to push the 
protons in the nucleus apart cannot change the identities of these 
particles, and the gravitational force certainly does not do anything
of the sort, so the observation of neutrons  changing into protons
or protons into neutrons provided evidence of a new kind of force
in the nature. As its name implies, the weak nuclear force is weaker
than the electromagnetic or the strong nuclear forces. This is shown
for instance by the fact that nuclear beta decay is so slow; the 
fastest nuclear beta decays take on the average about a hundredth
of a second; languorously slow compared with the typical time scale of
processes caused by the strong nuclear force, which is roughly a 
millionth millionth millionth millionth of a second.

In 1933 Enrico Fermi took the first significant step toward a theory
of this new force. ... There followed a quarter century of experimental
afford aimed at tying up the loose ends of the Fermi theory.
... In 1957 this [problem] was settled and the Fermi theory of the
weak nuclear force was put into its final form. ... Nevertheless, even
though we had a theory that was capable of accounting for everything
that was known experimentally about the weak force, physicists in general 
found the theory highly unsatisfactory.... The things that were wrong
with the Fermi theory were not experimental but theoretical. ...
when the theory was applied to more exotic processes it gave nonsensical 
results....when they did the calculations the answer would turn out
to be infinite.... Infinities like these had been encountered in the 
theory of electromagnetic forces by Oppenheimer and others in the early
1930's, but in the late 1940's theorists had found that all these 
infinities in quantum electrodynamics would cancel when the mass and
electric charge of the electron are properly defined, or ``renormalized".
As more and more became known about the weak forces it became increasingly
clear that the infinities in Fermi's theory of the weak forces 
would not cancel in this way; the theory was not renormalizable. The
other thing that was wrong with the theory of weak forces was that it
has a large number of arbitrary elements....

I had worked on the theory of weak forces off and on since graduate
school, but in 1967 I was working instead on the strong nuclear forces,
the forces that hold neutrons and protons together inside atomic nuclei.
I was trying to develop a theory of the strong nuclear forces based on
an analogy with quantum electrodynamics. I thought that the difference
between the strong nuclear forces and electromagnetism might be explained
by a phenomenon known as broken symmetry, which I explain later. It did
not work. I found myself developing a theory that did not look like at all
the strong forces as they were known to us experimentally. Then it suddenly
occurred to me that these ideas, although they had turned out to be 
completely useless as far as the strong forces were concerned, provided
a mathematical basis for a theory of weak nuclear forces that would do
anything that one might want. I could see the possibility of a theory
of the weak force analogous to quantum electrodynamics. Just as the
electromagnetic force between distant charged particles is caused by
the exchange of photons, a weak force would not act all at once at a 
single point in space (as in the Fermi theory) but it would be caused
by the exchange of photonlike particles between particles at different
positions. These new photonlike particles could not be massless like the
photon (for one thing, if massless they would have been discovered long
before), but they were introduced into the theory in a way that was so
similar to the way that the photon appears in quantum electrodynamics
that I thought that the theory might be renormalizable in the same sense
as quantum electrodynamics--that is, that the infinities in the theory
could be canceled by a redefinition of the masses and other quantities
in the theory. Furthermore, the theory would be highly constrained by
its underlying principles and would thus avoid a large part of 
arbitrariness of previous theories. 

I worked out a particular concrete realization of this theory, that is,
a particular set of equations that govern the way the particles interacted
and that would have the Fermi theory as a low energy approximation.
I found in doing this, although it had not been my idea at all to start
with, that it turned out to be a theory not only of the weak forces,
based on an analogy with electromagnetism; it turned out to be a unified
theory of the weak and electromagnetic forces that showed that they were
both just different aspects of what subsequently became called an 
electroweak force. The photon, the fundamental particle whose emission 
and absorption causes electromagnetic forces, was joint in a tight-knit
family group with the other photonlike particles predicted by the theory:
electrically charged W particles whose exchange produces the weak force
of beta radioactivity, and a neutral particle I called the ``Z",
about which more later. (W particles were an old story in speculations 
about the weak forces; the W stands for ``weak". I picked the letter Z
for their new sibling because the particle has zero electric charge
and also because Z is the last letter of the alphabet, and I hoped that
this would be the last member of the family.) Essentially the same
theory was worked out independently in 1968 by the Pakistani physicist
Abdus Salam, working in Trieste....
Both Salam and I had stated our
opinion that this theory would eliminate the problem of infinities
in the weak forces. But we were not clever enough to prove this.
In 1971 I received a preprint from a young graduate student at the 
University of Utrecht named Gerard 't Hooft, in which he claimed
to show that this theory actually  had solved the problem of the
infinities: the infinities in calculations of observable quantities
would in fact all cancel just just as in quantum electrodynamics...."

\section{Unified wave mechanics.}
There are two natural ways to furnish the time on nilpotent groups.
One of them is defined by solvable extensions 
while in the other case the  
time-axis is introduced by 
Cartesian product with the real line $\mathbb R$. 
These two extensions are called expanding- and static-models
respectively.

The metric on the nilpotent group is positive definite 
where the Laplacian
turns out to be natural physical Hamilton operator corresponding
to elementary particle systems. Concrete systems are represented by the
corresponding invariant subspaces of the Laplacian and one obtains the 
Hamilton operator of a given system by restricting the Laplacian onto these
subspaces. In order to establish the wave equations regarding 
these Hamiltonians, on both extensions indefinite
metrics must be introduced. That is, adequate wave mechanics
associated with these Hamiltonians can just relativistically be
introduced such that one assumes appropriate
Lorenz-indefinite metrics on both extensions.
As it turns out, these metrics really provide the familiar wave operators 
of wave mechanics.

\subsection{Solvable extensions.}
Any 2-step nilpotent Lie group, $N$, extends to a
solvable group, 
$SN$, which is defined on the half-space $\mathcal N\times \mathbb R_+$ by
the group multiplication 
\begin{equation}
(X,Z,t)(X^\prime ,Z^\prime ,t^\prime )
=(X+t^{\frac{1} {2}}X^\prime ,Z+tZ^\prime +\frac {1}{2}t^{\frac{1}{2}}
[X,X^\prime ],tt^\prime ).
\end{equation}
This formula provides the 
multiplication also
on the nilpotent group $N$, which appears as a subgroup 
on the level set $t=1$.
 
The Lie algebra of this
solvable group is $\mathcal S=\mathcal N\oplus \mathcal T$. 
The Lie 
bracket is completely determined by the formulas 
\begin{equation}
[\partial_t,X]=\frac{1}{2}X\quad ;\quad [\partial_t,Z]=Z\quad;\quad 
[\mathcal N,\mathcal N]_{/SN}
=[\mathcal N,\mathcal N]_{/N},
\end{equation}
where $X \in \mathcal X$ and $Z \in \mathcal Z$.

The indefinite metric tensor is defined 
by the left-invariant extension
of the indefinite inner product,
$\langle\, ,\,\rangle$, defined 
on the solvable Lie algebra $\mathcal S$ by  
$\langle\partial_t ,\partial_t\rangle =-1$,  
$\langle\partial_t ,\mathcal N\rangle =0$, and 
$\langle\mathcal N ,\mathcal N\rangle =
\langle\mathcal N ,\mathcal N\rangle_{\mathcal N}$. 
The last formula indicates
that the original innerproduct is kept on the subalgebra $\mathcal N$.
Lie algebra $\mathcal S$ is considered as the tangent space at the origin
$(0,0,1)\in SN$ of the solvable group and 
$\langle\, ,\,\rangle$ is extended
to a left-invariant metric $g$ onto $SN$ by the group multiplication
described above. 
Scaled inner product $\langle\, ,\,\rangle_{q}$ 
with scaling factor $q > 0$ is also defined by  
$\langle \partial_t, \partial_t\rangle_{q}=-1/q^{2}$, but, 
keeping both the inner product 
on $\mathcal N$ 
and the relation $\partial_t\perp \mathcal N$ on $\mathcal N$ intact. 
That is, the scaling regards just the direction regarding $\partial_t$. 
The left invariant
extension of these inner products are denoted by $g_{q}$.

For precise explanations we need some explicit formulas
on these groups as well.
The left-invariant extensions, $\mathbf Y_i, \mathbf V_\alpha ,\mathbf T$, 
of the unit vectors 
\begin{equation}
E_i=\partial_i\quad ,\quad e_\alpha =\partial_\alpha 
\quad ,\quad\epsilon =q\partial_t
\end{equation}
picked up at the
origin $(0,0,1)$ are the vector fields:
\begin{equation}
\mathbf Y_i=t^{\frac 1{2}}\mathbf X_i\quad ;\quad \mathbf V_\alpha 
=t\mathbf Z_\alpha 
\quad ;\quad \mathbf T=qt\partial_t ,
\end{equation}
where $\mathbf X_i$ and $\mathbf Z_\alpha$ are the 
invariant vector fields introduced on $N$ previously.

One can establish these 
formulas by the following standard computations. Consider the 
vectors $\partial_i ,\partial_{\alpha}$ and $\partial_t$ at the origin 
$(0,0,1)$ such that they are the
tangent vectors of the curves $c_A (s)=(0,0,1)+s\partial_A$,
where $A=i,\alpha ,t$. Then
transform these curves to an arbitrary point by the left 
multiplications. 
Then the tangent of the transformed
curve gives the desired left invariant vector at an arbitrary point.

According to these
formulas, not $t$ but $T$ defined by $\partial_T=\mathbf T$ is the correct
physical time parameterization on the $t$- parameter line, which, by
the below arguments, are geodesics on $SN$. The transformation law
$\partial_T=(dt/dT)\partial_t$ yields the relations $(dt/dT)=qt$, 
$\ln t=qT$, and $t=e^{qT}$. Thus a $t$-level set is the same as the 
$T=(\ln t)/q$-level set and subgroup $N$ corresponds both to $t=1$ and
$T=0$. The reversed time $-T$ is denoted by $\tau$.    
 
Let $c_x(s)$ and $c_z(s)$ be integral curves of finite length
$||c_x||$ resp. $||c_z||$ 
of the invariant vector fields $\mathbf X$ and $\mathbf Z$ on the
subgroup $N$. Then the flow generated by $\partial_\tau$ moves these
curves to $c^\tau_x(s)$ resp. $c^\tau_z(s)$ of length 
$||c_x^\tau ||=||c_x||e^{q\tau /2}$ resp. 
$||c_z^\tau ||=||c_z||e^{q\tau}$. 
That is, by considering them as functions 
of the time-variable $\tau$, the length is increasing 
such that the rate of change 
(derivative with respect to $\tau$) is
proportional to the length of the curves. In other words,
this mathematical space-time model represents an expanding micro universe
where the distance between particles is growing exactly in the same way
how this growing distance was measured by Hubble, in 1929, 
between galaxies \cite{h}. 

Edwin Hubble came to this conclusion after 
experimenting red-shift in the spectra of galaxies he observed for 
cataloguing their distances from the earth. It was quite a surprise
that, contrary to the expectation, the 
red- and blue-shifted galaxies occurred with
no equal likelihood but most of them appeared red-shifted. That is,
most of them are moving away from us. Even more surprising was to find
that even the size of a galaxy's red shift is not random, but it is 
directly proportional to the galaxy's distance from us. This means, that
the farther the galaxy is, the faster it is moving away. This is the 
familiar Hubble's law which was actually predicted by the Friedmann
cosmological model, in 1922.  

The tendency to expand must be rooted 
from the very same tendency inbuilt into the microscopic universe.
This argument, however, contradicts the experimental fact 
(explained in the introduction) according to which 
the particles are not expanding. This paradox can be resolved, however,
by recognizing that, due to the expansion, the change of the constant 
magnetic fields, which are present in the particles
also by the new model, induces electromagnetic waves which are
immediately radiated out into the space. This explanation clarifies not
only this paradox but it casts also a new light to the presence of the 
constant radiation experimentally known in the space. These 
arguments greatly enhances the importance of the exact mathematical
models introduced in this paper.

The covariant derivative can be computed by the well known formula
\begin{equation}
\langle\nabla_PQ,R\rangle =
\frac 1{2}\{\langle P,[R,Q]\rangle +\langle Q,[R,P]\rangle +
\langle [P,Q],R\rangle\},
\end{equation}
where $P,Q,R$ are invariant vector fields.
Then we get
\begin{eqnarray}
\label{solvnabla}
\nabla_{X+Z}(X^*+Z^*)=\nabla^N_{X+Z}(X^*+Z^*)-q(\frac 1{2}
\langle X,X^*\rangle+\langle Z,Z^*\rangle )\mathbf T ;\nonumber
\\
\nabla_X\mathbf T=\frac q{2}X\quad ;
\quad\nabla_Z\mathbf T=qZ \quad ;\quad
\nabla_TX=\nabla_TZ=\nabla_TT=0,
\nonumber
\end{eqnarray}
where $\nabla^N$ denotes covariant derivative 
and $X,X^*\in\mathcal X;Z,Z^*\in\mathcal Z;T\in\mathcal T.$

The Laplacian on these solvable groups can be established by the same
computation performed on $N$. Then we get  
\begin{eqnarray}
\Delta=
t^{2}\Delta_Z-q^2(t^2\partial^2_{tt}+t\partial_t)+\\
t\big(\Delta_X+
\frac 1 {4}\sum_{\alpha ;\beta =1}^l
\langle J_\alpha (X),J_\beta (X)\rangle\partial^2_{\alpha \beta} 
+\sum_{\alpha =1}^l\partial_\alpha D_\alpha\bullet \big) 
+q^2(\frac k {2}+l)t\partial_t=
\nonumber\\
=e^{2qT}\Delta_Z-\partial^2_{TT}+\nonumber
\\
e^{qT}\big(\Delta_X+
\frac 1 {4}\sum_{\alpha ;\beta =1}^l
\langle J_\alpha (X),J_\beta (X)\rangle\partial^2_{\alpha \beta} 
+\sum_{\alpha =1}^l\partial_\alpha D_\alpha\bullet \big) 
+q(\frac k {2}+l)\partial_T.
\nonumber
\end{eqnarray}
This is the Laplacian on the solvable extension of a general 2-step
nilpotent Lie group. On the extension of a H-type group it appears in the
following simpler form:
\begin{equation}
\Delta =\{e^{2qT}\Delta_Z-\partial^2_{TT}\}+
\big\{e^{qT}\big(\Delta_X+
\frac 1 {4}
\mathbf x^2\Delta_Z 
+\sum\partial_\alpha D_\alpha\bullet \big) 
+q(\frac k {2}+l)\partial_T\big\}
\end{equation}
This operator is expressed regarding the collapsing time direction.
Substitution $T=-\tau$ provides the operator in terms of the expanding
time direction. Note that the first operator, 
$e^{-2q\tau}\Delta_Z-\partial^2_{\tau\tau}$, looks like "expanding
meson operator", while the second one is similar to 
the Schr\"dinger operator of charged particles. 
This question is further investigated in the next section.

In order to understand the deeper connections to general relativity,
also the Riemannian curvature should explicitly be computed. This 
calculation can straightforwardly be implemented by substituting
formulas (\ref{solvnabla}) into the standard
formula of the Riemannian curvature. Then we get
\begin{eqnarray}
R_q(X^*\wedge X)=R(X^*\wedge X)+ \frac q{2}[X^*,X]\wedge\mathbf T+
\frac {q^2}{4}X^*\wedge X ;
\\
R_q(X\wedge Z)=R(X\wedge Z)+\frac q{4}J_Z(X)\wedge\bold T+\frac{q^2}{2}
X\wedge Z ;
\\ 
R_q(Z^*\wedge Z)=R(Z^*\wedge Z)+q^2Z^*\wedge Z;\\
R_q((X+Z),\mathbf T)(.)=q\nabla_{\frac 1{2}X+Z} (.); \quad
R_q((X+Z)\wedge\mathbf T)=
\\
=\frac{1}{2}q(\sum_\alpha 
J_\alpha(X)\wedge e_\alpha 
-J^*_Z)-q^2(\frac{1}{4}X+Z)\wedge\mathbf T ,
\end{eqnarray}
where $J^*_Z$ is the 2-vector dual to the 
2-form $\langle J_Z(X_1),X_2\rangle$ and
$R$ is the Riemann curvature on 
$N$. The vectors in these formulas are elements of the Lie algebra.

By introducing $H(X,X^*,Z,Z^*):=\langle J_Z(X),J_{Z^*}(X^*)\rangle$,
for the Ricci curvature
we have
\begin{eqnarray}
Ri_q(X)=Ri(X)-q^2(\frac k{4}+\frac l{2})X;\\
Ri_q(Z)=Ri(Z)-q^2(\frac k{2}+l)Z\quad ;\quad
Ri_q(T)=+q^2(\frac k{4}+l)T,
\end{eqnarray}
where the Ricci tensor $Ri$ on $N$ is described
by formulas

\begin{eqnarray}
Ri(X,X^*)=-\frac 1 {2} \sum_{\alpha =1}^lH(X,X^*,e_\alpha ,e_\alpha )=
-\frac 1{2}H_{\mathcal X}(X,X^*)=-\frac l{2}\langle X,X^*\rangle ;
\nonumber
\\
Ri(Z,Z^*)=\frac 1 {4} \sum_{i=1}^k
H(E_i,E_i,Z,Z^*)=\frac 1{4}H_{\mathcal Z}(Z,Z^*)=\frac k {4}
\langle Z,Z^*\rangle ,
\nonumber
\end{eqnarray}
and by
$Ri(X,Z)=0$. By assuming $q=1$, we have
\begin{eqnarray}
Ri_1(X)=-(\frac k{4}+l)X\, ;\,
Ri_1(Z)=-(\frac k{4}+l)Z;\\
Ri_1(T)=(\frac k{4}+l)T, \quad
\mathcal R=-(\frac k{4}+l)(k+l+1),
\\
Ri_1(X+Z,X^*+Z^*)-\frac 1{2}\mathcal R\langle X+Z,X^*+Z^*\rangle =\\ 
=(\frac k{4}+l)(k+l-\frac 1{2})\langle X+Z,X^*+Z^*\rangle ,
\\ 
Ri_1(X+Z,T)-\frac 1{2}\mathcal R\langle X+Z,T\rangle =0,
\\
Ri_1(T,T)-\frac 1{2}\mathcal R\langle T,T\rangle =(\frac k{4}+l)
(k+l+\frac 3{2})\langle T,T\rangle .
\end{eqnarray}

These tensors are defined in terms of the elements of the Lie algebra.
In order to compute them on local coordinate systems, first 
the metric tensor
$
g_{ij}=g\big(\partial_i,
\partial_j)\, ,\, g_{i\alpha}=g ( \partial_i,
\partial_\alpha )\, ,\, g_{\alpha\beta}=g(\partial_\alpha
,\partial_\beta )
$
and its inverse, $g^{ij},g^{i\alpha},g^{\alpha\beta}$, on $N$,
need to be calculated. By the explicit 
form of the invariant vector fields we have:
\begin{eqnarray}
g_{ij}=\delta_{ij} + \frac 1 {4}\sum_{\alpha =1}^l
\langle J_\alpha (X),\partial_i\rangle\langle J_\alpha
(X),\partial_j\rangle ,\,\,  g_{\alpha\beta}=\delta_{\alpha \beta} ,
\\
g_{i\alpha}=-\frac 1 {2} \langle J_\alpha (X),\partial_i\rangle ,
\,\,
g^{ij}=\delta_{ij} , \,\, g^{i\alpha}=\frac 1 {2} \langle\partial_i,
J_\alpha (X)\rangle ,
\\
g^{\alpha \beta}=
\delta_{\alpha \beta} + \frac 1 {4} \langle J_\alpha (X), 
J_\beta (X)\rangle =(1+\frac 1 {4}\mathbf x^2) \delta_{\alpha \beta}.
\end{eqnarray}
These components determine the metric tensor on $SN$ 
by the formulas:
$
tg_{ij}=\, ,\,t^{3/2}g_{i\alpha}\, , \, t^2g_{\alpha\beta}
$.

\subsection{Static Schr\"odinger and neutrino equations.}

The static model is defined by the Cartesian product, $N\times\mathbb R$,
of metrics, where $\mathbb R$,
parameterized by $t$, is endowed by the indefinite inner
product $\langle \partial_t, \partial_t\rangle_{q}=-1/q^{2}$. 
The several objects
such as Riemann curvature can be computed by laws corresponding to the
Cartesian products, thus they are non-trivial only regarding the
nilpotent direction. These explicit formulas can easily be
recover from the previous ones. 

In what follows
we utilize Pauli's computation (\ref{pnonrel_1})-(\ref{nonrel_waveeq})
regarding  the non-relativistic approximation of the relativistic
wave equation. By choosing $q=1/c$,
the Laplacian appears in the following form:
\begin{eqnarray}
\label{lapl}
\Delta =
(\Delta_Z-\frac 1{c^2}\partial^2_{tt})+
\big(\Delta_X+\frac 1 {4}\mathbf x^2\Delta_Z 
+\sum\partial_\alpha D_\alpha\bullet \big)
=\\ 
=
(\Delta_Z+\frac{2m\mathbf i}{\hbar}\partial_t-\frac 1{c^2}\partial^2_{tt})
+\big(\Delta_X+\frac 1 {4}\mathbf x^2\Delta_Z 
+\sum\partial_\alpha D_\alpha\bullet -\frac{2m\mathbf i}{\hbar}
\partial_t \big).
\nonumber
\end{eqnarray}

On the Z-space, operator $\Delta_Z-\frac 1{c^2}\partial^2_{tt}$ is nothing
but the wave operator (\ref{y1}). According to Yukawa's exposition, 
the eigenfunctions, $U$, of this operator describe 
the eigenstates of nuclear forces. 
Due to (\ref{waveeq}), the general solutions of this wave equation 
are de Broglie's wave packets (\ref{wavepack}). On the mathematical model, 
however, these wave packets are 
represented by twisted functions of the form  
\begin{eqnarray}
\psi_{\mathbf Bp_iq_i} (X,Z,t)=
\\
=\int_{\mathbb R^l} e^{\mathbf i(\langle Z,K\rangle -\omega t)}
\phi (\mathbf x,\mathbf k)\varphi (K_u)
\prod_{i=1}^{k/2}z^{p_i}_{K_ui}(X)\overline z^{q_i}_{K_ui}(X)dK=
\\
=\int_{\mathbb R^l} e^{\mathbf i(\langle Z,K\rangle -\omega t)}
\phi (\mathbf x,\mathbf k)F_{\mathbf Bp_iq_i} (X,K_u)dK,
\\
{\rm where}\quad\quad
\sqrt{\mathbf k^2+\frac{m^2c^2}{\hbar^2}}=\frac{\omega}{c}.
\end{eqnarray}

Wave packets $\psi_{Qpq} (X,Z,t)$ are defined by means of the functions
\begin{equation}
F_{Qpq} (X,K_u)=\varphi (K_u) (\Theta_{Q}^p\overline\Theta^q_{Q})(X,K_u).
\end{equation}
They are defined also for Z-sphere bundles 
$S_R(\mathbf x)$, which versions are
indicated by denotations 
$\psi_{\mathbf Bp_iq_iS_R}$ and $\psi_{QpqS_R}$. 
From this respect, $\psi_{\mathbf Bp_iq_i\mathbb R^l}$ and 
$\psi_{Qpq\mathbb R^l}$ correspond to the above introduced
wave packets. 

For a fixed Z-vector, $Z_\gamma$, the regarding denotations
are 
$\psi_{\mathbf Bp_iq_iZ_\gamma}$ and $\psi_{QpqZ_\gamma}$, 
where $\mathbf B$
is an orthonormal basis regarding $J_{Z{\gamma u}}$. In this case the 
integral is taken with respect to the Dirac delta measure concentrated
at $Z_\gamma$, thus these formulas can be written up without indicating
this integral or constant $\varphi (K_u)$. By projections $\Pi_X^{(n)}$,
one defines
\begin{equation}
\Psi_{....}(X,Z,t)=   
\int_{\circ} e^{\mathbf i(\langle Z,K\rangle -\omega t)}
\phi (\mathbf x,\mathbf k)\Pi_X^{(n)}F_{...} (X,K_u)dK,
\end{equation}
where dots $....$ can be substituted by any of the symbols 
$\mathbf Bp_iq_i\mathbb R^l,\,Qpq\mathbb R^l$, .. e. t. c., and circle,
$\circ$, could symbolize any of the integral domains 
$\mathbb R^l,\, S_R,\, Z_\gamma$.

If also projections $\Pi_K^{(r,s)}$ are applied to $F_{...}$, 
the corresponding functions are $\psi^{(r,s)}_{....}$ resp. 
$\Psi^{(r,s)}_{....}$. This operation makes sense only for
integral domains $\mathbb R^l$ or $S_R(\mathbf x)$ 
but it is not defined for the
singular Dirac delta domain $Z_\gamma$. 
Anti de Broglie wave packets are defined by replacing $-\omega$ with 
$+\omega$ in the above formulas. The corresponding functions are denoted
by $\psi^{anti}_{....}$ and $\Psi^{anti}_{....}$. The associated particles
are called antiparticles. These objects can be introduced also by keeping
$-\omega$ and replacing $\mathbf i$ by $-\mathbf i$.

As it is indicated, the right side of (\ref{lapl}) is computed by adding 
$\frac{2m\mathbf i}{\hbar}\partial_t-
\frac{2m\mathbf i}{\hbar}\partial_t=0$ 
to the left side. 
Then the wave operator
associated with nuclear forces becomes 
\begin{equation}
\label{N}
\mathit N=\Delta_Z+\frac{2m\mathbf i}{\hbar}
\partial_t-\frac 1{c^2}\partial^2_{tt},
\end{equation}
which is the non-relativistic wave operator established in 
(\ref{nonrel_waveeq}).
As it is explained in (\ref{nonrel_wavepack})-(\ref{nonrel_waveeq}),
the $\mathit N$-harmonic waves, defined by 
$\mathit N (\tilde \psi )(Z,t)=0$,
relate to the relativistic wave by the formula
\begin{equation}
\psi (Z,t)=e^{-\frac{\mathbf imc^2}{\hbar}t} \tilde \psi (Z,t).
\end{equation}

Also remember that frequency $\tilde\omega$ is derived from 
 \begin{equation}
\omega =\frac{E}{\hbar}
=\frac{mc^2}{\hbar}\sqrt{1+ \frac{\hbar^2\mathbf k^2}{m^2c^2}}=
\frac{mc^2}{\hbar}+\tilde\omega
=\frac{mc^2}{\hbar}+\frac{\hbar}{2m}\mathbf k^2+\dots ,
\end{equation}
by the Taylor expansion of function $\sqrt{1+x}$. Thus the third term
depends on $\hbar^3$ and, by stepping further, 
this exponent is increased by $2$, by each step. For low speed
particles, value $\tilde\omega =\frac{\hbar}{2m}\mathbf k^2$ is a good 
approximation of the frequency, thus also $E=\hbar\tilde\omega$ is a good
approximation for the
energy of the particle associated with this non-relativistic wave.
Note that $E=E_{kin}$ is nothing but the kinetic energy owned by the
material particle. By this reason, the particle associated with the wave
operator $\mathit N$ 
can be consider as one of the residues of a decaying material particle
which has neither mass nor charge and the only source of its energy is
the kinetic energy of the decaying material particle. Such particles are
the neutrinos, thus $\mathit N$ is called neutrino operator accompanying
the electron-positron system.

The energy $mc^2$ of the material particle is completely attributed to the
other particle associated with the second operator
\begin{equation}
\label{S}
\mathit S=\Delta_X+\frac 1 {4}\mathbf x^2\Delta_Z 
+\sum\partial_\alpha D_\alpha\bullet -\frac{2m\mathbf i}{\hbar}
\partial_t .
\end{equation}
incorporated into the Laplacian (\ref{lapl}). In order to understand
this particle represented by this operator, we introduce first the 
de Broglie wave packets $\tilde \Psi_{....}(X,Z,t)$ and 
$\tilde \Psi^{anti}_{....}(X,Z,t)$ in the same way as before, but now,
the $\omega$ is replaced by $\tilde\omega$ which can take values such as
$\frac{\hbar}{2m}\mathbf k^2,\frac{\hbar}{2m}(4r+4p+k)\mu , 
\frac{\hbar}{2m}((4r+4p+k)\mu +4\mu^2)$.
By (\ref{land}), the Schr\"odinger equation for an electron is: 

\begin{eqnarray}
\label{land_schr}
-\big(\Delta_{(x,y)} -
{ eB\over \hbar c\mathbf i}
D_z\bullet
+{e^2B^2\over 4\hbar^2 c^2}(x^2+y^2) \big)\psi =
\frac{2m\mathbf i}{\hbar}\frac{\partial \psi}{\partial t}.
\end{eqnarray}

On the Z-crystal model, operator $\lhd_\mu$ is defined by the
action of the Laplacian $\Delta$ of the nilpotent group on functions
of the form
$\psi (X)e^{2\pi\mathbf i\langle Z_\gamma ,Z\rangle}$.
In terms of  $\lambda =\pi \mathbf z_\gamma $, this action can
be described as acting only on $\psi$ by the operator
$
\lhd_{\mu}=
\Delta_X +2 \mathbf i D_{\mu }\bullet -\mu^2\mathbf x^2-4\mu^2.
$ 
If the last constant term is omitted, the operator left is denoted by 
$\sqcup_\mu$.
Then, in the 2D case after substitution $\mu ={eB/2\hbar c}$, 
the negative of this reduced operator becomes nothing but the 
Hamilton operator standing on the left side of the
above Schr\"odinger equation.
If $\tilde K=2\pi Z_\gamma\, ,\, \mu =\tilde{\mathbf k}/2\, ,\,
m=\kappa m_e$, and
$f_\mu (\mathbf x^2)$ is a function such that 
$f_\mu (\tilde t)$ is an eigenfunction
of the radial Landau-Zeeman operator $\Diamond_{\tilde t}+4\mu^2$, 
defined in (\ref{Lf_lambda}), with eigenvalue 
$-\tilde\omega =-(4r+4p+k)\mu$, then for
$ \mathit S\big(\tilde\Psi^{anti}_{...\tilde K}(X,Z,t)\big)$ we have:
\begin{eqnarray}
\mathit S\big(   
e^{\mathbf i(\langle Z,\tilde K\rangle +\frac{\hbar}{2m}\tilde\omega t)}
f_\mu (\mathbf x^2)\Pi_X^{(n)}F_{...} (X,\tilde K_u)\big)=
\\
=e^{\mathbf i\langle Z,\tilde K\rangle}(\sqcup_\mu -
\frac{2m\mathbf i}{\hbar}\frac{\partial}{\partial t})\big(
e^{\frac{\hbar\mathbf i}{2m}\tilde\omega t}
f_\mu (\mathbf x^2)\Pi_X^{(n)}F_{...} (X,\tilde K_u)\big)=0.
\nonumber
\end{eqnarray}
Thus on Z-crystals, operator $\mathit S$ is nothing but  
Schr\"odinger's classical wave operator of an electron positron system. 

Note that no 
non-relativistic limiting was used to obtain this operator. It is 
naturally incorporated into the complete 
Laplacian $\Delta$. Although it is the same as the non-relativistic wave
operator obtained earlier by non-relativistic limiting, even the neutrino
operator, $\mathit N$, is not the result of a non-relativistic limiting.
The Laplacian $\Delta$ is the sum of these two natural operators, meaning
that it actually represents a system consisting of electrons positrons
and electron-positron-neutrinos. 
The above arguments also suggest that this system can be regarded as the
result of a sort of nucleus-decay. A rigorous theory describing this 
process is yet to be established. It is clear, however, that
the basic mathematical tool underlying this physical theory must be
the decomposition of the Laplacian into operators corresponding to the
constituents of a given particle system. Dealing with Laplacian means
that one does not violates the principle of energy conservation. Moreover,
this tool provides also the exact operators associated with the particles,
which is the most attractive new feature of these exact mathematical
models. 

Actually, the elementary particles discovered in classical quantum 
theory were introduced by the very same idea.
For instance, the neutrino was first postulated in 1930 by 
Wolfgang Pauli to preserve conservation of energy, conservation of 
momentum, and conservation of angular momentum in beta decay – 
the decay of a neutron into a proton, an electron and an antineutrino. 
Pauli theorized that an undetected particle was carrying away 
the observed difference between the energy, momentum, 
and angular momentum of the initial and final particles. The only 
difference between the two ways introducing the neutrinos is that
Pauli did not have a Riemann manifold in hand in the Laplacian of which
he would have been able to separate the neutrino from the other particles
resulted by the decay.

The only term in the Laplacian containing 
second order derivatives regarding
the time variable $t$ is the neutrino operator. 
This term is of first order in the Schr\"odinger operator. Because 
of this, waves $\tilde\Psi$ are not solutions of the neutrino
operator and waves $\tilde\psi$ obtained above by the Taylor expansion are
not solutions of the Schr\"odinger equation. In order to cope with this
difficulty, non-relativistic approximation
can be implemented such that one attributes the
kinetic energy represented by $\Delta_Z$ in the neutrino operator 
to the Hamilton
operator associated with $\mathit S$ by considering the total 
Schr\"odinger operator
\begin{equation}
\label{TS}
{\mathbb S}=\Delta_X+(1+\frac 1 {4}\mathbf x^2)\Delta_Z 
+\sum\partial_\alpha D_\alpha\bullet -\frac{2m\mathbf i}{\hbar}
\partial_t ,
\end{equation}
which is the sum of the Schr\"odinger operator and $\Delta_Z$.

In this step, the two operator is pulled together to form an operator
which is of first order regarding the time variable. 
This scheme is completely analogous to those applied by Schr\"odinger 
when, instead
of the Klein-Gordon equation, he introduced his equation. A major
difference is, however, that the above operator accounts also with the
energy of neutrinos accompanying the electron-positron system, moreover,
the non-relativistic approximation is applied to the neutrino operator
and not to the electron-positron operator. The neglected Taylor-terms
in this approximation depend on $\hbar^s$, where $s\geq 3$.
The wave functions regarding this pulled-together operator 
are defined by the eigenvalues 
$-\tilde\omega =-((4r+4p+k)\mu +4\mu^2$. Then,
in terms of 
$\mathbf F^{(n)}_{...} (X,\tilde K_u)=\Pi_X^{(n)}F_{...} (X,\tilde K_u)$, 
we have:
\begin{eqnarray}
\mathbb S\big(\tilde\Psi^{anti}_{...\tilde K}(X,Z,t)\big)=\mathbb S\big(   
e^{\mathbf i(\langle Z,\tilde K\rangle +\frac{\hbar}{2m}\tilde\omega t)}
f_\mu (\mathbf x^2)\mathbf F^{(n)}_{...} (X,\tilde K_u)\big)=
\\
=e^{\mathbf i\langle Z,\tilde K\rangle}(\lhd_\lambda -
\frac{2m\mathbf i}{\hbar}\frac{\partial}{\partial t})\big(
e^{\frac{\hbar\mathbf i}{2m}\tilde\omega t}
f_\mu (\mathbf x^2)\mathbf F^{(n)}_{...} (X,\tilde K_u)\big)=0.
\nonumber
\end{eqnarray}

\begin{eqnarray}
\mathbb S\big(\tilde\Psi^{anti}_{...S_R}\big)=\mathbb S\big(\oint_{S_R}   
e^{\mathbf i(\langle Z,K\rangle +\frac{\hbar}{2m}\tilde\omega t)}
f_{\frac 1{2}\mathbf k} (\mathbf x^2)\mathbf F^{(n)}_{...} (X,K)dK_n\big)=
\\
=\oint_{S_R}e^{\mathbf i\langle Z,K\rangle}(\lhd_{\frac 1{2}\mathbf k} -
\frac{2m\mathbf i}{\hbar}\frac{\partial}{\partial t})\big(
e^{\frac{\hbar\mathbf i}{2m}\tilde\omega t}
f_{\frac 1{2}\mathbf k} 
(\mathbf x^2)\mathbf F^{(n)}_{...} (X,K)dK_n\big)=0.
\nonumber
\end{eqnarray}

\begin{eqnarray}
\mathbb S\big(\tilde\Psi^{anti}_{...\mathbb R^l}\big)=
\mathbb S\big(\int_{\mathbb R^l}   
e^{\mathbf i(\langle Z,K\rangle +\frac{\hbar}{2m}\tilde\omega t)}
f_{\frac 1{2}\mathbf k} (\mathbf x^2)\mathbf F^{(n)}_{...} (X,K_u)dK\big)=
\\
\int e^{\mathbf i\langle Z,K\rangle}
(\lhd_{\frac 1{2}\mathbf k} -
\frac{2m\mathbf i}{\hbar}\frac{\partial}{\partial t})\big(
e^{\frac{\hbar\mathbf i}{2m}\tilde\omega t}
f_{\frac 1{2}\mathbf k} 
(\mathbf x^2)\mathbf F^{(n)}_{...}
(X,K_u)\big)\mathbf k^{l-1}dK_ud\mathbf k=0.
\nonumber
\end{eqnarray}

The same formulas hold for operator $\mathit S$ which appears
as the classical Schr\"odinger operator 
$\sqcup_{\frac 1{2}\mathbf k} -\frac{2m\mathbf i}{\hbar}
\frac{\partial}{\partial t}$ behind the integral sign.
Similar arguments work out also for operator $\OE$, in which case
the Schr\"odinger operators act on wave functions 
$\tilde\Psi^{anti(r,s)}_{...S_R}(X,Z,t)$. This is still a scalar operator
which can be reduced to a radial operator acting on a single radial 
function.  
The radial operator to which the complete operator $\Delta$ can be reduced
act on d-tuples of radial function, therefore integral formulas regarding
these cases must be built up in terms of function 
$f_\beta\Pi^{(\beta )}_{K_u}\mathbf F^{(n)}$ where d-tuple
$(f_1,\dots ,f_d)$ is an eigen d-tuple of the reduced radial operator.  
The particles defined by these operators are denoted by 
$W_{\OE} =W_{\OE^{(1)}}$ resp. 
$W_\Delta =W_\Delta^{(\infty)}$. They are called
clean-weak and clean-high W-particles respectively, while the other 
particles $W^{(u)}_{\OE} =W_{\OE^{(u)}}$ resp. 
$W^{(u)}_\Delta =W_{\Delta^{(u)}}$ are the so called dirty W-particles. 
The neutrino operator is the same in all of these cases, thus
the associated particles are denoted  $Z_{\OE}$. These denotations are
are suggested by the theory of weak nuclear forces. They indicate that
W-type particles can analogously be defined also regarding strong forces.
However, the beta decay can be explained just by the clean weak nuclear
forces.

\subsection{Expanding Schr\"odinger and neutrino equations.}

For the sake of simplicity the following formulas are established regarding
the collapsing (shrinking) time-direction $T$ under the condition
$q=1$. Formulas regarding the expanding 
time-direction $\tau$ can be obtained 
by the substitution $T=-\tau$. Instead of $t$, the expanding wave functions
are introduced in terms of $e^T$.
That is, the shrinking twisted wave packets are of the form
\begin{eqnarray}
\Psi_{....}(X,Z,T)=   
\int_{\circ} e^{\mathbf i(\langle Z,K\rangle -\omega e^T)}
\phi (\mathbf x,\mathbf k)\Pi_X^{(n)}F_{...} (X,K_u)dK
\\
=\int_{\circ} e^{\mathbf i(\langle Z,K\rangle -\omega e^T)}
\phi (\mathbf x,\mathbf k)\mathbf F^{(n)}_{...} (X,K_u)dK,
\nonumber
\end{eqnarray}
where
$
\sqrt{\mathbf k^2+\frac{m^2c^2}{\hbar^2}}=\frac{\omega}{c},
$
and, as above,  dots $....$ can be substituted by any of the symbols 
$\mathbf Bp_iq_i\mathbb R^l,\,Qpq\mathbb R^l$, .. e. t. c., and circle,
$\circ$, could symbolize any of the integral domains 
$\mathbb R^l,\, S_R,\, Z_\gamma$. 
De Broglie's wave packets $\tilde \Psi_{....}(X,Z,T)$ and 
$\tilde \Psi^{anti}_{....}(X,Z,T)$ are introduced by the same modification,
that is, the $\omega$ is replaced by $\tilde\omega$ in the latter formula, 
which can take values such as
$\frac{\hbar}{2m}\mathbf k^2,\frac{\hbar}{2m}(4r+4p+k)\mu, 
\frac{\hbar}{2m}((4r+4p+k)\mu +4\mu^2)$.

The meson operator appears now in the form:
\begin{equation}
\label{M}
\mathit M=e^{2T}\Delta_Z+\partial_T-\partial^2_{TT},
\end{equation}
The same computation implemented on the static model shows that the
shrinking matter waves $\Psi_{....}(X,Z,T)$, defined in terms of $\omega$,
are really harmonic, meaning $\mathit M \Psi_{....}=0$, 
regarding this operator. Moreover, wave packet 
$\hat \Psi (X,Z,T)$ defined by
\begin{equation}
\Psi (X,Z,T)=e^{-\frac{\mathbf imc^2}{\hbar}e^T} \hat \Psi (X,Z,T)
\end{equation}
is harmonic regarding the shrinking neutrino operator
\begin{equation}
\label{NtrnoS}
\mathit N=
e^{2T}\Delta_Z+(1+\frac{2m\mathbf i}{\hbar}e^T)\partial_T-\partial^2_{TT}.
\end{equation}

According to this computation, the corresponding decomposition
of the Laplacian into non-polarized neutrino and Schr\"odinger operator
of a particle system is as follows
\begin{eqnarray}
\Delta =\{e^{2T}\Delta_Z-\partial^2_{TT}\}+
\\
+\big\{e^{T}\big(\Delta_X+\frac 1 {4}\mathbf x^2\Delta_Z 
+\sum\partial_\alpha D_\alpha\bullet \big) 
+(\frac k {2}+l)\partial_T\big\}=
\nonumber
\\
=
\big\{ e^{2T}\Delta_Z+(1+\frac{2m\mathbf i}{\hbar}e^T)\partial_T-
\partial^2_{TT}\big\} +
\\
+\big\{e^{T}\big(\Delta_X+\frac 1 {4}\mathbf x^2\Delta_Z 
+\sum\partial_\alpha D_\alpha\bullet \big) 
+(\frac k {2}+l-1-\frac{2m\mathbf i}{\hbar}e^T)\partial_T\big\}
\nonumber
\\
=
\big( e^{2T}\Delta_Z+(1+\frac{2m\mathbf i}{\hbar}e^T)\partial_T-
\partial^2_{TT}\big) +
\\
+e^{T}\big(\Delta_X+\frac 1 {4}\mathbf x^2\Delta_Z 
+\sum\partial_\alpha D_\alpha\bullet 
-\frac{2m\mathbf i}{\hbar}\partial_T \big) 
+\big(\frac k {2}+l-1\big)\partial_T.
\nonumber
\end{eqnarray}

In terms of $\tau =-T$, these operators define the expanding
non-polarized neutrino, Schr\"odinger, and tractor operators respectively.
The force associated with the third operator supplies the energy what is
needed to maintain the expansion. Let it also be pointed out that
according to these models the particles are not just moving away from
each other but this movement is also accelerating. This acceleration 
can be computed by taking the second derivatives of the distance
function introduced at explaining the expansion. 
This acceleration can be explained just by this new force represented 
by the third operator.   

It is important to keep in mind that these operators are non-polarized 
The polarized operators appear behind the integral sign after these 
non-polarized operators are acting on the de Broglie waves expressed 
by means of twisted Z-Fourier transforms.

\section{Spectral isotropy.} 

Spectral isotropy means that, on an arbitrary ball$\times$ball-
or sphere$\times$ball-type manifold with a fixed boundary condition, 
for any two unit X-vectors 
$Q$ and $\tilde Q$, the Laplacian is isospectral on the invariant 
function spaces
$
\mathbf \Xi_{Q}=\sum_n\mathbf \Xi_{Q}^{(n)}
$
and
$
\mathbf \Xi_{\tilde Q}=\sum_n\mathbf \Xi_{\tilde Q}^{(n)}
$
satisfying the given boundary condition. Recall that total space
$\mathbf \Xi_{Q}$ is everywhere dense in the straight space spanned
by functions of the form $f(|X|,Z)\langle Q,X\rangle$, furthermore,
the boundary conditions can totally be controlled by $(X,Z)$-radial
functions, therefore, this total function space is the same than what is
defined in terms of the straight functions.

Next we prove that any of the Heisenberg type groups is spectrally
isotropic.  On general 2-step nilpotent Lie
groups, where the endomorphisms $J_Z$ can have distinct eigenvalues,
this statement can be established just in a much weaker form not discussed
in this paper. Contrary to these general cases, the H-type groups
have the distinguishing characteristics that they represent systems 
consisting identical
particles and their anti-particles. Also note that on the expanding model 
this spectral
isotropy explains why the radiation induced by the change of
the constant magnetic field attached to the the spin operator is the
same whichever direction it is measured from. This radiation isotropy,
which has been measured with great accuracy, actually indicates that
the Heisenberg type groups are enough to describe the elementary particles 
and there is no need to involve more general 2-step nilpotent Lie groups 
to this new theory. 

This spectral isotropy is established by the intertwining operator
$
\omega_{Q\tilde Qpq\bullet}:
\mathbf \Xi_{Qpq.}\to
\mathbf \Xi_{\tilde Qpq\bullet}
$, 
defined by
\begin{eqnarray}
\label{intertw1}
\mathcal{HF}_{Qpq\bullet}(\phi )=
\int_{\bullet} e^{\mathbf i\langle Z,K\rangle}
\phi (\mathbf x,K)\Pi_X^{(n)}(\Theta_{Q}^p\overline\Theta^q_{Q})(X,K_u)
dK_\bullet\to 
\\
 \to \mathcal{HF}_{\tilde Qpq}(\phi )=
\int_{\bullet} e^{\mathbf i\langle Z,K\rangle}
\phi (\mathbf x,K)\Pi_X^{(n)}(\Theta_{\tilde Q}^p
\overline\Theta_{\tilde Q}^q)(X,K_u)dK_{\bullet} ,
\nonumber 
\end{eqnarray}
where heavy dot $\bullet$ may represent $R_Z(\mathbf x)$ or $\mathbb R^l$.
They indicate the function spaces this operator is defined for.
The very same operators are defined by corresponding Hankel
functions obtained by the Hankel decomposition of the above functions
to each other. This statement immediately follows from the fact that
the sums of the Hankel components restore the original functions in
the above formulas. Also note that the $(X,Z)$-radial Hankel functions
are the same regarding the two corresponded functions and they differ
from each other just by the Hankel polynomials obtained by the 
projections. Thus the operator defined by Hankel decomposition
must really be the same as the above operator. It follows that operator
(\ref{intertw1}) preserves the Hankel decompositions. 

An other remarkable property of this transform is
that it can be induced by
point transformations
of the form $(O_{\tilde QQ},id_Z)$,
where $id_Z$ is the identity map on the Z-space and $O_{\tilde QQ}$
is orthogonal transformation on the X-space, transforming
subspace $S_{\tilde Q}$, spanned by $\tilde Q$ and all  
$J_{Z_u}(\tilde Q)$, onto the similarly defined 
$S_{Q}$. This part of the 
map is uniquely determined, whereas, between the complement
X-spaces it can be arbitrary orthogonal transformation. One should keep in
mind that such a point transformation pulls back a function just from 
$\Xi_{Q\bullet}$ into the function space $\Xi_{\tilde Q\bullet}$ and it is
not defined for the whole $L^2$ function spaces. 

Next we prove that this operator intertwines the restrictions of
the Laplacian to these invariant subspaces, term by term. Since X-radial
functions are mapped to the same X-radial functions, furthermore, 
also X-spherical harmonics of the same degree are intertwined with
each other, the statement holds for $\Delta_X$. This part of the statement
can be settled also by the formula
$
\Delta_X=\sum\partial_{z_{K_ui}}\partial_{\overline z_{K_ui}}
$
written up in a coordinate system established by an orthonormal 
complex basis where 
$Q$ is the first element of this basis
$\{Q_{K_ui}\}$, for all $K_u$. That is, 
$\Theta_Q=z_1$ holds and the statement really follows by the above formula.
A third proof can be derived from the fact that this operator is induced
by the above described point transformation.

Due to the relations
\begin{eqnarray}
\label{MF}
\mathbf M\mathcal F_{Qpq}(\phi )=
\mathcal F_{Qpq}((q-p)\mathbf k\phi )\, ,\,
\Delta_Z\mathcal F_{Qpq}(\phi )=
\mathcal F_{Qpq}(-\mathbf k^2\phi ),\\
\label{HMF}
\mathbf M\mathcal{HF}_{Qpq}(\phi )
=\mathcal{HF}_{Qpq}((q-p)\mathbf k\phi )\, ,\,
\Delta_Z\mathcal {HF}_{Qpq}(\phi )=
\mathcal {HF}_{Qpq}(-\mathbf k^2\phi ),
\nonumber
\end{eqnarray}
the other parts of the Laplacians are also
obviously intertwined. In these formulas, the second line follows 
from the first
one by the commutativity of operator $D_K\bullet$ with the
projection $\Pi_X$. 

The intertwining property regarding the Dirichlet or Z-Neumann conditions 
on ball$\times$ball- resp. sphere$\times$ball-type domains can also
be easily established either by the above point transformations,
or with the help of the Hankel transform implying that functions of 
the form $f(|X|,|Z|)$ appearing in the transform are intertwined 
with themself. Since the
boundary conditions are expressed in terms of these double radial 
functions, this argument provides a second proof for the statement. 
A third proof can be obtained by the explicit formulas established for 
twisted functions satisfying these boundary conditions.

The most interesting new feature of this spectral isotropy is that
it holds even in cases when the space is not spatially isotropic. 
Let it be recalled that spatial-isotropy is the first assumption on 
the Friedmann model and the overwhelming evidence supporting this 
assumption was exactly the isotropic radiation measured by Penzias 
and Wilson, in 1965. 
The mathematical models demonstrate, however, that the spectral 
isotropy manifests itself even in much more general situations when the 
space is rather not spatially isotropic. In order to explain
this situation more clearly, we describe the isometries of H-type groups
in more details.

Generically speaking, these groups are not isotropic regarding the 
X-space. They satisfy this property just in very rare occasions. 
Starting with Heisenberg-type groups $H^{(a,b)}_3$, there is a subgroup,   
$\mathbf {Sp}(a)\times \mathbf{Sp}(b)$, 
of isometries which acts as the identity
on the Z-space and which acts transitively just on the X-sphere 
of $H_3^{(a+b,0)}$. In this case the intertwining property for operators 
$\omega_{Q\tilde Q,\bullet}$ also follows from the existence of 
isometries transforming $Q$ to $\tilde Q$. But the isometries
are not transitive on the X-spheres of the other spaces satisfying 
$ab\not =0$. 
This statement follows from the fact that the complete group 
of isometries is
$(\mathbf {Sp}(a)\times \mathbf{Sp}(b))\cdot SO(3)$,
where the action of $SO(3)$, described in terms of unit quaternions
$q$ by 
\[
\alpha_q(X_1,\dots ,X_{a+b},Z)=
(qX_1\overline q,\dots ,qX_{a+b}\overline q,qZ\overline q),
\] 
is transitive on the Z-sphere. 
Thus the above tool is not available to prove the spectral
isotropy in these cases. Yet, by the above arguments, the
$\omega_{Q\tilde Q\bullet}$ is an intertwining operator on its own right,
without the help of the isometries.

Note that the members of a family defined
by a fixed number $a+b$ have the same X-space but non-isomorphic 
isometry groups having
different dimensions in general. More precisely, two members,
$H^{(a,b)}_3$ and $H^{(a^\prime ,b^\prime)}_3$,
are isometrically isomorphic if and only if $(a,b)=(a^\prime ,b^\prime)$ 
holds up to an order. Furthermore,
the sphere$\times$sphere-type manifolds are homogeneous just on 
$H_3^{(a+b,0)}\simeq H_3^{(0,a+b)}$, 
while they are locally inhomogeneous, even on the X-spheres, 
on the other members of the family.
Let it be emphasized again that this homogeneity concerns not just 
the homogeneity of the 
X-spheres but the whole sphere$\times$sphere-type manifold.

The isometries are well known also for all H-type groups $H^{(a,b)}_l$.
The X-space-isotropy is obviously true also on the Heisenberg groups 
$H^{(a,b)}_1$ which can be defined as H-type groups satisfying $l=1$.
Besides this and the above quaternionic examples, it yet holds just on
$H_7^{(1,0)}\simeq H_7^{(0,1)}$. Thus the X-isotropy regarding isometries
is a rare property, indeed. This is why the spectral isotropy, 
yielded by any of the H-type groups, 
is a very surprising phenomenon indeed. It puts a completely
new light to the radiation isotropy evidencing the spatial-isotropy
assumed in Friedmann's model. According to the above theorem,
the radiation isotropy seems to
be evidencing all the new relativistic models of elementary
particle systems which are built up in this paper by nilpotent 
Lie groups and their solvable extensions. These models are far beyond those
satisfying the spatial-isotropy assumption.

By these arguments, all the isospectrality examples established in
\cite{sz1}-\cite{sz4} for a family $H^{(a,b)}_l$ defined by the same 
$a+b$ and $l$ can be reestablished almost in the same way. 
Note that such a family is defined
on the same $(X,Z)$-space and two members defined for  
$(a,b)$ resp. $(a^\prime ,b^\prime)$ are not locally isometric, unless 
$(a,b)=(a^\prime ,b^\prime)$ upto an order. 
The above intertwining operator
proving the spectral isotropy appears now in the following modified form
$
\Omega_{Qpq\bullet}:
\mathbf \Xi_{Qpq\bullet}\to
\mathbf \Xi^\prime_{Qpq\bullet}
$, 
that is, it corresponds one-pole functions having the same pole but which
are defined by the distinct complex structures $J_{K_u}$ resp. 
$J^\prime_{K_u}$ to each other. The precise correspondence is then 
\begin{eqnarray}
\mathcal{HF}_{Qpq\bullet}(\phi )=
\int_{\bullet} e^{\mathbf i\langle Z,K\rangle}
\phi (\mathbf x,K)\Pi_X^{(n)}(\Theta_{Q}^p\overline\Theta^q_{Q})(X,K_u)
dK_\bullet\to 
\\
\to \mathcal{HF}^\prime_{Qpq}(\phi )=
\int_{\bullet} e^{\mathbf i\langle Z,K\rangle}
\phi (\mathbf x,K)\Pi_X^{(n)}(\Theta^{\prime p}_{Q}
\overline\Theta^{\prime q}_{Q})(X,K_u)dK_{\bullet} ,
\nonumber 
\end{eqnarray}
which, by the same argument used for proving spectral isotropy,
intertwines both the Laplacian and the boundary conditions on 
any of the ball$\times$ball- resp. 
sphere$\times$ball-type domains.  

In order to establish the complete isospectrality,
pick the same system $\mathbf B$ of independent vectors 
for both of these manifolds and,
by implementing the obvious alterations in the previous formula, define
$
\Omega_{\mathbf Bp_iq_i\bullet}:
\mathbf \Xi_{\mathbf Bp_iq_i\bullet}\to
\mathbf \Xi^\prime_{\mathbf Bp_iq_i\bullet}.
$
This is a well defined operator between the complete $L^2$ function spaces
which follows from the theorem asserting that the
twisted Z-Fourier transforms are $L^2$ bijections mapping 
$\mathbf{P\Phi}_{\mathbf B}$ onto an everywhere dense subspace of
the complete straightly defined space 
$\overline{\mathbf{\Phi}}_{\mathbf B}$. It     
can be defined also by all those maps $\Omega_{Qpq\bullet}$ where 
pole $Q$ is in the real span of the vector system $\mathbf B$. Thus, by 
the above argument, operators $\Omega_{\mathbf Bp_iq_i\bullet}$ intertwine
the complete $L^2$ function spaces along with the boundary conditions.

Interestingly enough, 
the isospectrality can be establish also in a new way,
by using only the intertwining operators $\omega_{Q\tilde Qpq\bullet}$. 
Indeed, suppose that the elements, 
$\{\nu_{p,q,i}\}$, 
of the spectrum appear on a total one-pole space  
$
\mathbf \Xi_{Qpq}
$ with multiplicity, say $m_{pq,i}$. By the one-pole intertwining
operators this spectrum is uniquely determined and each eigenvalue
regarding the whole $L^2$-space must be listed on this list, furthermore, 
the multiplicity regarding the whole $L^2$-space is the multiple of these
one-pole multiplicities by 
the dimension of $\Xi_{\mathbf B pq}$. 
On the other hand, for  
$Q\in\mathbb R^{r(l)a}$, the isospectrality 
obviously follows from $\mathbf\Xi_{Qpq}=\mathbf\Xi^{\prime}_{Qpq}$, 
thus both the elements of the spectra and the regarding multiplicities
must be the same on these two manifolds.

This proof clearly demonstrates that how can the spectrum 
``ignore'' the isometries. It explains also the striking examples
where one of the members of the isospectrality family is a homogeneous
space while the others are locally inhomogeneous. It also 
demonstrates that spectral isotropy implies the isospectrality. Recall
that this isospectrality is established above
by an intertwining operator which most conspicuously 
exhibits the following so called C-symmetry principle of physics: 
``The laws remain the
same if, in a system of particles, some of them are exchanged for their
anti-particles." This intertwining operator really operates such that
some of the particles are exchanged for their anti-particles. 
Thus this proof demonstrates that spectral isotropy
implies the C-symmetry. Moreover, the isospectrality is the manifestation 
of the physical C-symmetry. This is a physical verification of the
isospectrality of the examples established 
in \cite{sz1}-\cite{sz4}. Actually
these examples provide a rigorous mathematical proof for the C-symmetry
which is not a theorem but one of the principles in physics. 
Let it be mentioned yet that the
isospectrality proof provided in this paper is completely new, where the 
Hankel transform appears in the very first time in these investigations.
(All the other proofs are established by 
different integral transformations.)
 
The isospectrality theorem naturally extends to 
the solvable extensions endowed with positive definite invariant 
metrics. 
Just functions $\phi(\mathbf x,K)$ should be exchanged for  
$\phi(\mathbf x,t,K)$ in the above formulas, that is, the 
intertwining is led back to the nilpotent group.
It is important to keep in mind that the metrics are positive definite
in the spectral investigations of the 
solvable extensions. The group
of isometries acting on the sphere$\times$sphere-type manifolds of 
$SH^{(a,b)}_3$, where $ab\not =0$, is 
$(\mathbf {Sp}(a)\times \mathbf{Sp}(b))\cdot SO(3)$,
while 
it is $\mathbf{Sp}(a+b)\cdot\mathbf{Sp}(1)$ 
on $SH^{a+b,0)}_3$. 
By these formulas, the
same statement proved for the nilpotent groups can be generalized 
to the solvable isospectrality family. 
These solvable examples provide also
new striking examples of isospectral 
metrics where one of them is homogeneous
while the other is locally inhomogeneous. 
By summing up, we have

\begin{theorem} 
Operators $\Omega_{Qpq}$ and  
$\omega_{Q\tilde Qpq}$ defined for combined spaces
intertwine the Laplacians, moreover,
they can be induced by
point transformations
of the form 
$(K_Q,id_Z)$ resp. $(O_{Q\tilde Q},id_Z)$,
where $id_Z$ is the identity map on the Z-space and the first ones
are orthogonal transformations on the X-space, transforming
subspace $S_Q$, spanned by $Q$ and all  $J_{Z_u}(Q)$, 
to $S^\prime_Q$ resp. $S_{\tilde Q}$. This part of the 
map is uniquely determined, whereas, between the complement
spaces it can be arbitrary orthogonal transformation. 

By this induced map interpretation, both the Dirichlet and Z-Neumann 
conditions are also intertwined by these operators. This statement
follows also from the fact that
the very same operators are defined by corresponding the Hankel
functions obtained by the Hankel decomposition of the above functions
to each other. That is, these maps intertwine also the corresponding Hankel
subspaces along with the exterior operator $\OE$ and the interior
strong force operator $\mathbf S$.    

So far the isospectrality is established for one-pole functions. For a 
global establishment consider a system $\mathbf B$ of $k/2$ independent
vectors described earlier on the X-space. Then the global operator
$\Omega_{\mathbf Bp_iq_i}: 
\mathcal{HF}_{\mathbf Bp_iq_i}(\phi )
\to \mathcal{HF}^\prime_{\mathbf Bp_iq_i}(\phi )$ can be defined also
by $Q$-pole functions satisfying $Q\in Span_{\mathbb R}(\mathbf B)$.
This proves that the  $\kappa_{\mathbf Bp_iq_i}$ defines a
global intertwining operator. 
\end{theorem}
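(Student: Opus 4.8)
The plan is to assemble the statement from four ingredients already in place: the $L^2$-bijectivity of the twisted Z-Fourier transforms (Theorem~\ref{welldef1}), the Hankel transform theorem, the term-by-term intertwining computations carried out in the spectral isotropy discussion, and the projection machinery $\Pi_X^{(n)}$, $\Pi_K^{(r,s)}$. First I would verify the intertwining term by term for the one-pole operators on the pre-spaces. Both $\omega_{Q\tilde Qpq}$ and $\Omega_{Qpq}$ leave the factor $\phi(\mathbf x,K)$ untouched and only exchange one twisting polynomial $\Theta_Q^p\overline\Theta_Q^q$ for another ($\Theta_{\tilde Q}^p\overline\Theta_{\tilde Q}^q$, resp.\ $\Theta_Q^{\prime p}\overline\Theta_Q^{\prime q}$), while composing with the same projection $\Pi_X^{(n)}$. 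The relations (\ref{MF})--(\ref{HMF}) therefore give the intertwining for $\Delta_Z$ and $\mathbf M$ at once, since those relations hold for every pole and involve only $\mathbf k$ and $\mathbf k^2$. For $\Delta_X$ I would use the formula $\Delta_X=\sum\partial_{z_{K_ui}}\partial_{\overline z_{K_ui}}$ written in a complex orthonormal basis whose first vector is $Q$ (resp.\ $\tilde Q$), so that $\Theta_Q=z_1$ (resp.\ $\Theta_{\tilde Q}=z_1$); then $\Delta_X$ acts identically on complex monomials of the same multidegree, and the $\Pi_X^{(n)}$-projected versions inherit this. Combining the three pieces intertwines the whole nilpotent-group Laplacian (\ref{Delta}) term by term. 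For $\Omega_{Qpq}$ the same computations go through with $J_{K_u}$ replaced by $J'_{K_u}$, because none of them used anything beyond the Clifford structure common to the members of a fixed family $H^{(a,b)}_l$; this is where the hypothesis that such a family lives on one fixed $(X,Z)$-space enters.

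Next I would supply the point-transformation interpretation, which gives an independent proof. One chooses the orthogonal map $O_{Q\tilde Q}\colon\mathcal X\to\mathcal X$ carrying $S_Q$ (the span of $Q$ and all $J_{Z_u}(Q)$) onto $S_{\tilde Q}$ --- uniquely determined once one requires $Q\mapsto\tilde Q$ and compatibility with the restricted endomorphisms --- and arbitrary on the orthogonal complement. Then $(O_{Q\tilde Q},id_Z)$ pulls $\Theta_{\tilde Q}(\cdot,K_u)$ back to $\Theta_Q(\cdot,K_u)$ for every unit $K_u$, so it maps $\mathbf\Xi_{\tilde Q\bullet}$ onto $\mathbf\Xi_{Q\bullet}$ and coincides there with $\omega_{Q\tilde Qpq}$; since the invariant metric is preserved by any orthogonal transformation of $\mathcal X$ fixing $\mathcal Z$ pointwise, this point map automatically intertwines $\Delta$. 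For $\Omega_{Qpq}$ one uses the map $K_Q$ carrying $S_Q$ onto $S'_Q$, which restricts to an isometry between the two structures on the relevant subspaces. I would then note that the freedom in the orthogonal part on the complement causes no problem, since $\omega_{Q\tilde Qpq}$ acts only through the twisting data coming from $S_Q$ and is insensitive to that choice.

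For the boundary conditions, the Dirichlet and Z-Neumann eigenfunctions on ball$\times$ball- and sphere$\times$ball-type domains are built from twisted Z-Fourier transforms with Hankel projections $\Pi_K^{(r,s)}$, and the boundary condition is a constraint only on the $(X,Z)$-radial factors $\phi(|X|,|Z|)$ and the Bessel-type radial functions $y_i^{(s)}$. Since $\omega_{Q\tilde Qpq}$ and $\Omega_{Qpq}$ preserve these radial factors and commute with $\Pi_K^{(r,s)}$ (which operates purely in the Z-variable) and with $\Pi_X^{(n)}$, the image of a function obeying a given boundary condition obeys the same one; equivalently, the induced point transformation $(O_{Q\tilde Q},id_Z)$ preserves the domain. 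Restricting this to the Hankel subspaces $\mathbf{H\Xi}^{(n,s)}$ and recalling that $\OE=\Delta_X+(1+\tfrac14\mathbf x^2)\Delta_Z+\mathbf L$ and the interior operator $\mathbf S$ are assembled from $\mathbf M$, $\Delta_X$, $\Delta_Z$ and the roulette/Hankel operators --- each of whose constituents has just been shown to be intertwined --- yields the intertwining of $\OE$ and $\mathbf S$ as well.

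Finally, for the global (multipole) statement, the one-pole total spaces $\mathbf\Phi_Q^{(n)}$, with $Q$ ranging over $Span_{\mathbb R}(\mathbf B)$, span $\sum_{(p_i,q_i)}\mathbf\Phi_{\mathbf B}^{(n)}$, and by Theorem~\ref{welldef1} the twisted Z-Fourier transform is an $L^2$-bijection between $\mathbf{P\Phi}_{\mathbf B}$ and an everywhere dense subspace of the straight space $\overline{\mathbf\Phi}_{\mathbf B}$. Hence the operator $\Omega_{\mathbf Bp_iq_i}$ obtained by gluing the $\Omega_{Qpq}$ for $Q\in Span_{\mathbb R}(\mathbf B)$ extends to a well-defined bijection of the complete $L^2$ spaces, and its intertwining property follows from the one-pole case by density. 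The main obstacle I expect is the well-definedness of this gluing: one must check that the one-pole pieces agree on overlaps, which reduces to the linear independence of the monomials $\prod z_{K_ui}^{p_i}\overline z_{K_ui}^{q_i}$ off the singularity set $S_{\mathbf B}$ (established in Theorem~\ref{welldef1}) together with the $\epsilon$-neighborhood cutoff $\psi_\epsilon(K)$ used there to pass between straight and twisted representations. That consistency check, plus confirming once more that the free orthogonal part of the point maps is immaterial, is the technical heart; the rest is bookkeeping on the term-by-term computations already in hand.
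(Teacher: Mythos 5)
Your term-by-term intertwining argument (first paragraph), your treatment of the boundary conditions via the double-radial factors and Hankel projections, and your globalization via the $\mathbf B$-system together with Theorem~\ref{welldef1} all follow the paper's own line of reasoning; that part is correct.

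However, your ``independent proof'' via the point transformation contains a genuine error. You write that ``the invariant metric is preserved by any orthogonal transformation of $\mathcal X$ fixing $\mathcal Z$ pointwise, [so] this point map automatically intertwines $\Delta$.'' This is false, and the paper itself goes to some lengths to emphasize the opposite: an orthogonal map $A$ on $\mathcal X$ together with $id_{\mathcal Z}$ is an isometry of the nilmetric only if $A\circ J_Z\circ A^{-1}=J_Z$ for all $Z$, i.e.\ only if $A$ commutes with every endomorphism $J_Z$. The isometry group of $H^{(a,b)}_3$ is $(\mathbf{Sp}(a)\times\mathbf{Sp}(b))\cdot SO(3)$, which fails to be transitive on X-spheres whenever $ab\neq 0$; that non-transitivity is the entire point of the section --- spectral isotropy holds even though spatial isotropy does not. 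So $(O_{Q\tilde Q},id_Z)$ is generally \emph{not} an isometry, and the commutation of its pull-back with $\Delta$ cannot be obtained ``automatically.'' What the paper actually claims is only that the operator $\omega_{Q\tilde Q pq\bullet}$ \emph{coincides} with pull-back by such a point map on the restricted invariant subspace $\mathbf\Xi_{Q\bullet}$; the intertwining with $\Delta$ must then still be verified explicitly, which is precisely what the term-by-term computation using (\ref{MF})--(\ref{HMF}) and $\Delta_X=\sum\partial_{z_{K_ui}}\partial_{\overline z_{K_ui}}$ does. Your overall conclusion survives because your first paragraph already supplies the correct proof, but the isometry shortcut would fail if one actually tried to rely on it, and it misreads the central geometric phenomenon the theorem is meant to illustrate.
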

\noindent{\bf Acknowledgement} I am indebted to the Max Planck Institute
for Mathematics in the Sciences, Leipzig, for the excellent working 
conditions provided for me during my visit in the 07-08 academic year.
My particular gratitude yields to Prof. E. Zeidler for the interesting
conversations.

\bibliographystyle{my-h-elsevier}

\end{document}